\documentclass[12pt]{amsart}
\usepackage{amsthm,amsxtra,amssymb,amscd}
\usepackage{mathtools}
\usepackage[T1]{fontenc}
\usepackage{lmodern}  
\usepackage{stmaryrd}
\usepackage{bm}
\usepackage{accents}
\usepackage[numbers,sort&compress]{natbib}
\usepackage[all]{xy}
\usepackage[colorlinks]{hyperref}

\usepackage[top=25truemm,bottom=25truemm,left=30truemm,right=30truemm]{geometry}

\newtheorem{theorem}{Theorem}[section]
\newtheorem{corollary}[theorem]{Corollary}
\newtheorem{proposition}[theorem]{Proposition}
\newtheorem{lemma}[theorem]{Lemma}

\theoremstyle{definition}    
\newtheorem{definition}[theorem]{Definition}
\newtheorem{example}[theorem]{Example}
\newtheorem{remark}[theorem]{Remark}

\theoremstyle{remark}

\newcommand{\thmref}[1]{Theorem~\ref{#1}}
\newcommand{\secref}[1]{Section~\ref{#1}}
\newcommand{\lmmref}[1]{Lemma~\ref{#1}}
\newcommand{\prpref}[1]{Proposition~\ref{#1}}
\newcommand{\crlref}[1]{Corollary~\ref{#1}}
\newcommand{\rmkref}[1]{Remark~\ref{#1}}
\newcommand{\appref}[1]{Appendix~\ref{#1}}

\newcommand{\MB}{\mathcal{M}_{\mathrm{B}}}
\newcommand{\tMB}{\widetilde{\mathcal{M}}_{\mathrm{B}}}
\newcommand{\bSig}{\mathbf{\Sigma}}
\newcommand{\FS}[2]{{}_{#1} \calA_{#2}}
\newcommand{\st}{{\mathrm{st}}}
\newcommand{\bcdot}{\boldsymbol{\cdot}}

\newcommand{\relmiddle}[1]{\mathrel{}\middle#1\mathrel{}}
\newcommand{\set}[2]%
{\left\{ \, #1 \relmiddle| #2 \, \right\}}
\newcommand{\abs}[1]%
{\left\lvert #1 \right\rvert}
\newcommand{\ov}{\overline}
\newcommand{\wh}{\widehat}

\newcommand{\sumfrac}[2]{\genfrac{}{}{0pt}{2}{#1}{#2}}

\newcommand{\id}{\mathrm{id}} 
\newcommand{\Z}{\mathbb{Z}} 
\newcommand{\C}{\mathbb{C}} 
\newcommand{\End}{\operatorname{End}} 
\newcommand{\Ker}{\operatorname{Ker}} 
\newcommand{\prj}{\operatorname{pr}} 

\newcommand{\Lie}{\operatorname{Lie}} 
\newcommand{\Ad}{\operatorname{Ad}} 
\newcommand{\ad}{\operatorname{ad}} 

\newcommand{\spq}{/\!\!/} 
\newcommand{\spqa}[1]{/\!\!/_{\!#1}\hspace{0.1em}}
\def\reE@DeclareMathSymbol#1#2#3#4{%
    \let#1=\undefined
    \DeclareMathSymbol{#1}{#2}{#3}{#4}}
\DeclareSymbolFont{symbolsC}{U}{txsyc}{m}{n}
\SetSymbolFont{symbolsC}{bold}{U}{txsyc}{bx}{n}
\DeclareFontSubstitution{U}{txsyc}{m}{n}
\reE@DeclareMathSymbol{\strictiff}{\mathrel}{symbolsC}{76}

\newcommand{\glue}[1]{\underset{#1}{\strictiff}}
\newcommand{\HHom}{\operatorname{\mathcal{H}\hspace{-.1em}\mathit{om}}} 
\newcommand{\res}{\operatorname*{res}} 
\newcommand{\Sto}{\operatorname{\mathbb{S}to}}

\newcommand{\bbD}{\mathbb{D}}
\newcommand{\bbM}{\mathbb{M}}
\newcommand{\bbP}{\mathbb{P}}
\newcommand{\bfa}{\mathbf{a}}
\newcommand{\bfb}{\mathbf{b}}
\newcommand{\bfD}{\mathbf{D}}
\newcommand{\bfH}{\mathbf{H}}
\newcommand{\calA}{\mathcal{A}}
\newcommand{\calC}{\mathcal{C}}

\newcommand{\calO}{\mathcal{O}}
\newcommand{\calR}{\mathcal{R}}
\newcommand{\frkg}{\mathfrak{g}}
\newcommand{\frkh}{\mathfrak{h}}
\newcommand{\frkt}{\mathfrak{t}}
\newcommand{\frku}{\mathfrak{u}}
\newcommand{\frkC}{\mathfrak{C}}

\title[Unfolding of wild character varieties]{Unfolding of wild character varieties}

\author[K.~Hiroe]{Kazuki Hiroe}
\address{Department of Mathematics and Informatics, Chiba University, 
1-33, Yayoi-cho, Inage-ku, Chiba-shi, Chiba, 263-8522 JAPAN}
\email{kazuki@math.s.chiba-u.ac.jp}

\author[D.~Yamakawa]{Daisuke Yamakawa}
\address{Department of Mathematics, Faculty of Science Division I, Tokyo University of Science, 1-3
Kagurazaka, Shinjuku-ku, Tokyo, 162-8601 JAPAN}
\email{yamakawa@rs.tus.ac.jp}

\subjclass[2020]{Primary 14M35; Secondary 34M40, 14J42, 32G34, 53D30}
\keywords{wild character varieties, quasi-Hamiltonian geometry, unfolding}

\allowdisplaybreaks
\begin{document}
\mathtoolsset{showonlyrefs=true}
\maketitle


\begin{abstract}
  In this paper, we study wild character varieties on compact Riemann surfaces 
  and construct Poisson maps from wild to tame character varieties by 
  unfolding irregular singularities into regular ones.
  Furthermore, we show that these unfolding Poisson maps induce Poisson 
  birational equivalences between wild and tame character varieties. 
  This result provides an affirmative answer to a conjecture posed by
  Klime\v{s}, Paul, and Ramis.
\end{abstract}

\section{Introduction}
This article investigates a geometric aspect of the confluence of singularities 
in meromorphic connections,
formulated through wild character varieties.
The main motivation is to construct Poisson and birational correspondences 
between wild character varieties and the character varieties arising 
from the unfolding of irregular singularities.

We now outline the main results of this article.
Let $G$ be a complex reductive group with a fixed maximal torus $T$.
Define $\bSig=(\Sigma,\bfa;{}^{1}Q,\ldots,{}^{m}Q)$
as an untwisted irregular curve in the sense of Boalch~\cite{Boa14}, 
consisting of a compact Riemann surface $\Sigma$ of genus $g$, 
a finite set of marked points $\bfa=\{a_{1},\ldots,a_{m}\}\subset \Sigma$,
and a collection of irregular types $^{i}Q$. 
Fixing a local coordinate $z_{i}$ centered at each marked point $a_{i}$, 
we write $^{i}Q=\sum_{j=1}^{r_{j}}{}^{i}Q_{j}z^{-j}_{i}
\in z_{i}^{-1}\mathfrak{t}[z_{i}^{-1}]$ and let 
\(\mathbf{H}=\prod_{i=1}^{m}Z_{G}(^{i}Q)
\)
denote the product of stabilizers of all coefficients 
${}^{i}Q_{j}\in \mathfrak{t}$ 
appearing in each $^{i}Q$. 
The wild character variety $\MB(\bSig)$
associated with $\bSig$ is then defined as a Poisson variety.
Furthermore,
given a conjugacy class $\mathcal{C}=\prod_{i=1}^m \mathcal{C}_{i} \subset \mathbf{H}$,
we define the symplectic wild character variety 
$\MB^{\st}(\bSig,\mathcal{C})$
as an open subset of 
the closed Poisson subvariety $\MB(\bSig,\ov{\mathcal{C}})$
of $\MB(\bSig)$,
corresponding to the closure $\ov{\mathcal{C}}$ of $\mathcal{C}$.
For precise definitions, see Section \ref{sec:wcv}.

Let us consider the unfolding of the wild character varieties.
Let $\bSig'=(\Sigma,\bfb;\mathbf{0})$ be 
the irregular curve with 
$\sum_{i=1}^{m}(r_{j}+1)=|\bfb|$ marked points, where all irregular types are  
trivial.
In other words, $\bSig'$ is simply a Riemann surface with marked points.
The associated wild character variety 
$\MB(\bSig')$ 
is then a standard (tame) character variety that parametrizes 
the isomorphism classes of semisimple $G$-representations of 
the fundamental group $\pi_{1}(\Sigma\backslash \bfb)$.
Next, we unfold the conjugacy class $\mathcal{C}$ as follows.
For each $i=1,\ldots,m$ and $j=1,\ldots,r_{i}$, choose $^{i}t_{j}\in T$ such that 
\[
  Z_{G}(^{i}t_{j})=Z_G({}^i Q_j, {}^i Q_{j+1}, \dots ,{}^i Q_{r_i}),\quad 
  Z_{G}(h({}^{i}t_{1}{}^{i}t_{2}\cdots {}^{i}t_{r_{i}})^{-1})\subset 
  Z_{G}(^{i}Q)\ (h\in \mathcal{C}_{i}).
\]
See \appref{app:parameter} for the existence of such elements.
Let  $^{i}\mathfrak{C}_{j}$ be the $G$-conjugacy class  
containing $^{i}t_{j}$, and 
let $^{i}\mathfrak{C}_{0}$ be the $G$-conjugacy class 
containing $\mathcal{C}_{i}({}^{i}t_{1}{}^{i}t_{2}\cdots {}^{i}t_{r_{i}})^{-1}$.
We then obtain the character varieties 
$\MB^{\st}(\bSig',\mathfrak{C})$
and $\MB(\bSig',\ov{\mathfrak{C}})$ 
associated with the conjugacy class $\mathfrak{C}=\prod_{i,j}{}^{i}\mathfrak{C}_{j}$.

The main results of this article are as follows. 
As a consequence of Theorem \ref{thm:unfolding-wcv}, 
we relate the wild character variety $\MB(\bSig,\ov{\mathcal{C}})$
and the unfolded character variety $\MB(\bSig',\ov{\mathfrak{C}})$
in a Poisson sense, namely, we obtain a Poisson map
\[
    \Upsilon_{({}^{i}t_{j})}\colon \MB(\bSig,\ov{\mathcal{C}})
    \rightarrow \MB(\bSig',\ov{\mathfrak{C}}).
\]
Furthermore, Corollary \ref{corl:unfolding-wcv} shows that 
the following Poisson birational equivalence holds:
\begin{theorem}\label{thm:main}
  Suppose that $\MB(\bSig,\ov{\calC})$, $\MB(\bSig',\ov{\frkC})$ 
  are both irreducible and 
  $\MB^\st(\bSig,\calC)$, $\MB^\st(\bSig',\frkC)$ are both nonempty. 
  Then $\MB(\bSig,\ov{\calC})$ and $\MB(\bSig',\ov{\frkC})$ 
  are Poisson birationally equivalent.
  Namely, there exist nonempty open subsets of them 
  which are isomorphic as Poisson varieties.
\end{theorem}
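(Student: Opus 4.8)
The plan is to deduce the statement directly from the Poisson map $\Upsilon_{({}^{i}t_{j})}\colon\MB(\bSig,\ov{\calC})\to\MB(\bSig',\ov{\frkC})$ furnished by \thmref{thm:unfolding-wcv}, by showing that this map restricts to a birational isomorphism between the symplectic loci $\MB^{\st}(\bSig,\calC)$ and $\MB^{\st}(\bSig',\frkC)$. By hypothesis these loci are nonempty, and since each is an open subset of the respective irreducible variety, each is automatically dense; moreover each carries a symplectic structure, being the stable part of the open symplectic leaf of its ambient Poisson variety. Consequently, a Poisson isomorphism between dense open subsets of $\MB^{\st}(\bSig,\calC)$ and $\MB^{\st}(\bSig',\frkC)$ will immediately produce the asserted Poisson birational equivalence between $\MB(\bSig,\ov{\calC})$ and $\MB(\bSig',\ov{\frkC})$. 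I abbreviate $\Upsilon_{({}^{i}t_{j})}$ by $\Upsilon$.

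First I would pin down the image of the symplectic leaf. Because $\Upsilon$ is Poisson, Hamiltonian vector fields on source and target are $\Upsilon$-related, so $\Upsilon$ carries each symplectic leaf of $\MB(\bSig,\ov{\calC})$ into a single symplectic leaf of $\MB(\bSig',\ov{\frkC})$, and the corestriction to that leaf is again a Poisson morphism. The choice of the unfolded classes ${}^{i}\frkC_{j}$, governed by the stabilizer conditions $Z_{G}({}^{i}t_{j})=Z_{G}({}^{i}Q_{j},\dots,{}^{i}Q_{r_{i}})$ imposed on the ${}^{i}t_{j}$, is precisely what matches the local monodromy data, so one checks from the explicit construction of $\Upsilon$ that it sends $\MB^{\st}(\bSig,\calC)$ into $\MB^{\st}(\bSig',\frkC)$, yielding a Poisson morphism $\Upsilon^{\st}\colon\MB^{\st}(\bSig,\calC)\to\MB^{\st}(\bSig',\frkC)$ between symplectic varieties.

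Next I would compare dimensions. The stabilizer conditions on the ${}^{i}t_{j}$ are designed so that the dimension of each unfolded class ${}^{i}\frkC_{j}$ accounts exactly for the local moduli carried by the irregular type together with the associated Stokes data; combined with the standard dimension formula for wild character varieties this gives $\dim\MB^{\st}(\bSig,\calC)=\dim\MB^{\st}(\bSig',\frkC)$. Now a Poisson map between symplectic manifolds is a submersion, since the Poisson relation $\Upsilon_{*}\pi=\pi'$ forces the differential to surject onto the image of the nondegenerate $\pi'^{\sharp}$; as both symplectic loci are smooth of the same dimension, $\Upsilon^{\st}$ is therefore étale, in particular dominant, onto the irreducible variety $\MB^{\st}(\bSig',\frkC)$.

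The crux, and the step I expect to be hardest, is generic injectivity of $\Upsilon^{\st}$, equivalently the construction of a rational inverse by confluence. Here one must exploit the explicit description of $\Upsilon$ on monodromy data: over a dense open set the tame monodromies around the cluster of $r_{i}+1$ regular punctures replacing each irregular point $a_{i}$ factor through the Stokes matrices and formal monodromy of the wild local data, and the task is to show that this factorization can be generically inverted, so that the wild Stokes and formal data are uniquely recovered from the tame representation. Granting this, $\Upsilon^{\st}$ is a dominant generically injective morphism of irreducible varieties over $\C$, hence birational; being also a Poisson morphism, it is an isomorphism of Poisson varieties over the open locus where it is bijective with étale inverse. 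Restricting to that locus identifies dense open Poisson subvarieties of $\MB(\bSig,\ov{\calC})$ and $\MB(\bSig',\ov{\frkC})$, which is exactly the claimed Poisson birational equivalence.
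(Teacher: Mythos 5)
Your high-level strategy coincides with the paper's (everything is routed through the Poisson map $\Upsilon_{({}^i t_j)}$ of \thmref{thm:unfolding-wcv}), but the step you yourself flag as the crux and then defer --- generic injectivity, i.e.\ recovering the Stokes and formal data from the tame monodromies, introduced with ``Granting this'' --- is precisely the mathematical content of the paper, and your proposal never supplies it. In the paper, injectivity is not deduced a posteriori from étaleness plus a dimension count plus a monodromy-inversion argument; rather, \thmref{thm:unfolding-wcv} already states that $\Upsilon_{({}^i t_j)}$ restricts to an \emph{open immersion} $\tMB(\bSig,\calC) \to \bbD(G)^{\circledast g} \circledast_G {}^1\frkC \circledast_G \cdots \circledast_G {}^m\frkC$ intertwining the quasi-Hamiltonian structures. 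This is proved constructively in Sections~\ref{sec:triangular}--\ref{sec:unfolding}: \lmmref{lmm:conj-unip} and the triangular decomposition of conjugacy classes (\prpref{prp:conj-tri}) show that each local unfolding step is an open immersion once the \emph{full} centralizer conditions $Z_G({}^i t_j) = {}^i H_j$ and $Z_G(h({}^i t_1 \cdots {}^i t_{r_i})^{-1}) \subset Z_G({}^i Q)$ hold (with Zariski's main theorem invoked in \prpref{prp:enrichedconj} to pass from étale-plus-birational to isomorphism). So the ``rational inverse by confluence'' you hope for is exactly what \crlref{crl:unfolding} constructs, and without it your argument proves nothing beyond the existence of an étale Poisson map. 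Your auxiliary steps (equality of dimensions via a dimension formula; a Poisson map between symplectic varieties is a submersion, hence étale in equal dimension) are correct but become unnecessary once the open-immersion statement is in hand.

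There is a second, independent gap: you assert that ``one checks from the explicit construction'' that $\Upsilon$ sends $\MB^\st(\bSig,\calC)$ into $\MB^\st(\bSig',\frkC)$. This is unsubstantiated, and the paper carefully does \emph{not} claim it: stability on the wild side (\rmkref{rmk:stable2}) involves the extra torus data $C_i^{-1} Z_i C_i$, and stability is not known to be preserved in either direction under unfolding. The paper instead defines $\MB^\circ(\bSig,\calC)$ as the a priori smaller open subset of $\MB^\st(\bSig,\calC)$ consisting of points whose \emph{image} is stable in $\tMB(\bSig',\frkC)$, obtains the symplectic open immersion $\MB^\circ(\bSig,\calC) \to \MB^\st(\bSig',\frkC)$ of \crlref{corl:unfolding-wcv}, and uses the irreducibility and nonemptiness hypotheses of \thmref{thm:main} solely to guarantee $\MB^\circ(\bSig,\calC) \neq \emptyset$ (the open image of $\tMB(\bSig,\calC)$ must meet the nonempty open stable locus). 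Your proof would need to either justify the stability claim or adopt this $\MB^\circ$ device; as written, both the injectivity crux and the stability transfer are asserted rather than proved.
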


We now provide some background and related work.
In \cite{Hir24}, a deformation of moduli spaces of 
meromorphic connections on 
the trivial $G$-bundle over the Riemann sphere $\mathbb{P}^{1}$ 
was constructed via the unfolding of irregular singularities.
It was subsequently shown that any moduli space of meromorphic $G$-connections
with unramified (untwisted) irregular singularities admits 
a deformation to the moduli space of Fuchsian $G$-connections.
Our main theorem gives a generalization of this result 
in the context of wild character varieties and further  
establishes Poisson birational equivalences among the moduli spaces 
arising from such deformations.
In related work,  Klime\v{s} \cite{Kli} showed the existence of
a birational transformation between 
character varieties associated with the Painlev\'e V and Painlev\'e VI equations
in the context of nonlinear Stokes phenomena for Painlev\'e equations.
Building on this, Paul and Ramis \cite{PauRam} 
showed that the birational transformation by Klime\v{s} is 
in fact symplectic,
and they posed an open problem suggesting that  
character varieties for other types of Painlev\'e equations 
should admit similar symplectic and birational maps arising from the 
unfolding of their irregular singularities. 
Our main theorem provides an answer to this problem 
in the case of untwisted wild character varieties.

Throughout this article, we fix a complex reductive group $G$ 
together with an $\Ad$-invariant non-degenerate symmetric bilinear form 
$(\bcdot ,\bcdot)$ on its Lie algebra $\frkg = \Lie G$. 
A variety means a (possibly reducible) complex algebraic variety. 
The identity component of an algebraic group $H$ is denoted by $H^0$.

\subsection*{Acknowledgements}

K.H. was supported by JSPS KAKENHI Grant Number 25K07043.
D.Y. was supported by JSPS KAKENHI Grant Number 24K06695.

\section{Quasi-Hamiltonian geometry}

In this section we briefly recall some basic notions and facts 
in (algebraic) quasi-Hamiltonian geometry; 
for more details, see \cite{AKSM02,AMM98,BC05,Boa14,LBS15}.

\subsection{Quasi-Poisson and quasi-Hamiltonian structures}

For a (possibly singular) $G$-variety $M$ 
with tangent sheaf $\Theta_M = \HHom(\Omega^1_M,\calO_M)$, 
we denote by $\frkg \to \Gamma(M,\Theta_M)$, $\xi \mapsto \xi_M$ 
the corresponding infinitesimal action; 
for $\xi \in \frkg$, the vector field $\xi_M$ is characterized by  
\[
  \xi_M(df)(p) = \left. \frac{d}{dt} f(e^{-t\xi}\cdot p) \right|_{t=0} 
  \quad (p \in M,\ f \in \calO_{M,p}).
\]
For each $k \in \Z_{>0}$ it induces a map 
$(\bcdot)_M \colon \bigwedge^k \frkg \to \Gamma(M, \bigwedge^k \Theta_M)$. 
Define a homomorphism of $\calO_M$-modules 
$(\bcdot)_M \colon \calO_M \otimes_\C \bigwedge^k \frkg \to \bigwedge^k \Theta_M$ 
by $f \otimes \xi \mapsto f \xi_M$. 

Let $(\bcdot)^\vee \colon \frkg^* \to \frkg$, $\alpha \mapsto \alpha^\vee$ be the inverse of the isomorphism 
$\xi \mapsto (\xi,\bcdot)$.
Define $\frkg$-valued vector fields $\delta^L, \delta^R \in \Gamma(G,\Theta_G \otimes_\C \frkg)$ on $G$ by 
\[
  \delta^L_x (\theta) = (L_x^* \theta)^\vee, \quad \delta^R_x (\theta) = (R_x^* \theta)^\vee 
  \quad (x \in G,\ \theta \in T_x^* G).
\]
Put $\delta = \tfrac12 (\delta^L + \delta^R)$.

\begin{definition}
  (1) A \emph{quasi-Poisson $G$-variety} is a $G$-variety $M$ 
  equipped with a $G$-invariant skew-symmetric bracket operation 
  \[
    \{ \bcdot,\bcdot \} \colon \calO_M \otimes_\C \calO_M \to \calO_M,
  \]
  on the structure sheaf $\calO_M$ satisfying  
  \begin{align}
      \{ f,gh \} &= g\{ f,h \} + \{ f,g \} h, \\
      \{ f, \{ g, h \} \} + \{ g, \{ h, f \} \} + \{ h, \{ f, g \} \} 
      &= \chi_M(df,dg,dh) 
    \qquad (f,g,h \in \calO_M),
  \end{align}
  where $\chi \in \bigwedge^3 \frkg$ is defined by  
  \[
  \chi(\alpha,\beta,\gamma) = 
  \langle \alpha, [\beta^\vee,\gamma^\vee] \rangle \quad (\alpha,\beta,\gamma \in \frkg).
  \] 
  The bracket $\{ \bcdot,\bcdot \}$ is called a \emph{quasi-Poisson bracket}. 
  
  (2) For a quasi-Poisson $G$-variety $M$, 
  a $G$-equivariant morphism $\mu \colon M \to G$ 
  (where $G$ acts on itself by conjugation) 
  is called a \emph{moment map} if 
  the bivector field 
  $\Pi \in \Gamma(M, \bigwedge^2 \Theta_M)$ 
  corresponding to the bracket $\{ \bcdot,\bcdot \}$ satisfies
  \[
    \Pi(\mu^*\theta,\bcdot) = \left( \delta(\theta) \circ \mu \right)_M \quad (\theta \in \Omega^1_M).
  \]
  A quasi-Poisson $G$-variety equipped with a moment map  
  is called a \emph{Hamiltonian quasi-Poisson $G$-variety}.
\end{definition}

Note that if $G$ is abelian (e.g.\ $G= \{ 1 \}$), then $\chi =0$ and hence 
a quasi-Poisson structure is a usual Poisson structure.

Just as a non-degenerate Poisson structure comes from a symplectic structure, 
so some class of Hamiltonian quasi-Poisson structures come from 
the following geometric objects.

\begin{definition}
  A \emph{quasi-Hamiltonian $G$-space} is a smooth $G$-variety 
  equipped with a $G$-invariant two-form $\omega$ on $M$ 
  and a $G$-equivariant morphism $\mu \colon M \to G$ 
  (where $G$ acts on itself by conjugation) 
  satisfying the following conditions.
  \begin{enumerate}\setlength{\itemsep}{2pt}
    \item[(QH1)] $d\omega = \tfrac{1}{12}(\mu^{-1}d\mu,[\mu^{-1}d\mu,\mu^{-1}d\mu])$.
    \item[(QH2)] $\omega(\xi_M,\bcdot) = (\mu^{-1}d\mu+ d\mu\,\mu^{-1},\xi)$ for all $\xi \in \frkg$.
    \item[(QH3)] $\Ker \omega_p \cap \Ker(d\mu)_p = \{0\}$ for any $p \in M$.
  \end{enumerate}
  The two-form $\omega$ is called the \emph{quasi-Hamiltonian two-form} and 
  $\mu$ is called the \emph{moment map}.
\end{definition}

If $G$ is abelian, then the three axioms imply that $\omega$ is a symplectic form.

\begin{theorem}[{\cite[Theorem~10.3]{AKSM02}, \cite[Corollary~3.22]{BCS05}}]\label{thm:qp-qham}
  For any quasi-Hamiltonian $G$-space $(M,\omega,\mu)$,  
  there exists a unique quasi-Poisson $G$-structure 
  $\{ \bcdot,\bcdot \}$ on $M$ with $\mu$ a moment map 
  such that the corresponding bivector field 
  $\Pi \in \Gamma(M, \bigwedge^2 \Theta_M)$ satisfies
  \[
    \Pi(\iota_v \omega , \bcdot) = v - \frac14 \left( (\mu^{-1}d\mu - d\mu\,\mu^{-1})(v) \right)_M 
    \quad (v \in \Theta_M).
  \]
\end{theorem}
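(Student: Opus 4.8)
The plan is to build the bivector field $\Pi$ by prescribing the associated $\calO_M$-linear map $\Pi^\sharp\colon\Omega^1_M\to\Theta_M$, $\alpha\mapsto\Pi(\alpha,\bcdot)$, on two $\calO_M$-submodules of $\Omega^1_M$ that together generate it, and then to check that the result is skew and satisfies the quasi-Poisson axioms with $\mu$ a moment map. Writing $\omega^\flat\colon\Theta_M\to\Omega^1_M$, $v\mapsto\iota_v\omega$, the displayed formula prescribes $\Pi^\sharp$ on $\range(\omega^\flat)$ by $\Pi^\sharp(\iota_v\omega)=v-\tfrac14\big((\mu^{-1}d\mu-d\mu\,\mu^{-1})(v)\big)_M$, while the moment-map condition prescribes $\Pi^\sharp$ on the pullbacks $\mu^*\theta$ of one-forms $\theta$ from $G$ by $\Pi^\sharp(\mu^*\theta)=(\delta(\theta)\circ\mu)_M$. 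Because a moment map forces the latter prescription and the displayed formula forces the former, uniqueness is immediate once these two submodules are shown to generate $\Omega^1_M$; thus the whole statement reduces to the existence, i.e.\ the well-definedness, of $\Pi$.

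The first step is the pointwise generation $\Omega^1_{M,p}=\range(\omega^\flat_p)+\range((d\mu)^*_p)$, which is exactly the dual of (QH3): since $M$ is smooth and $\omega^\flat$ is skew-adjoint, the annihilator of $\range(\omega^\flat_p)$ is $\Ker\omega_p$ and the annihilator of $\range((d\mu)^*_p)$ is $\Ker(d\mu)_p$, so the annihilator of the sum is $\Ker\omega_p\cap\Ker(d\mu)_p=\{0\}$. Granting this, I would verify that the two prescriptions agree on the overlap---if $\mu^*\theta=\iota_v\omega$ at a point, then both recipes return the same vector---this being a direct consequence of (QH2), which identifies $\iota_{\xi_M}\omega$ with the one-form $(\mu^{-1}d\mu+d\mu\,\mu^{-1},\xi)$ and so controls exactly how fundamental vector fields pair with $\omega$. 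The same identity, together with the definitions of $\delta^L,\delta^R$, yields skew-symmetry of $\Pi^\sharp$: evaluating $\langle\Pi^\sharp\alpha,\beta\rangle+\langle\Pi^\sharp\beta,\alpha\rangle$ on generators $\alpha,\beta$ drawn from $\range(\omega^\flat)$ and from $\mu^*\Omega^1_G$ reduces, via (QH2), to the skew-symmetry of $\omega$ and of $(\bcdot,\bcdot)$. This makes $\Pi$ a well-defined bivector field.

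It then remains to check the quasi-Poisson axioms. The $G$-invariance of the bracket follows from the invariance of $\omega$ and the equivariance of $\mu$, the Leibniz rule is automatic for a bivector-defined bracket, and the moment-map property holds by construction (consistency on the overlap having been arranged via (QH2)). The substantive point, which I expect to be the main obstacle, is the $\chi$-twisted Jacobi identity---equivalently that $\tfrac12[\Pi,\Pi]=\chi_M$ for the Schouten--Nijenhuis bracket, where $\chi_M$ is the trivector field induced by $\chi\in\bigwedge^3\frkg$ (with the standard sign conventions). This is where (QH1) is indispensable: transported through the $\omega$--$\Pi$ correspondence of the previous paragraph, the three-form on the right-hand side of (QH1) is precisely what produces the $\chi_M$ term, so that $d\omega=\tfrac1{12}(\mu^{-1}d\mu,[\mu^{-1}d\mu,\mu^{-1}d\mu])$ becomes equivalent to $\tfrac12[\Pi,\Pi]=\chi_M$. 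Concretely I would compute $[\Pi,\Pi]$ on exact one-forms using the splitting above, separate the contributions into an $\omega$-part governed by $d\omega$ and a Lie-algebraic part governed by the Maurer--Cartan terms, and match the total against $\chi_M$. A cleaner, more conceptual organization---which I would adopt to avoid sign-chasing, and which is the route of \cite{BCS05}---is Dirac-geometric: the graph $\{(v,\iota_v\omega)\}$ assembles into a Dirac structure in $\Theta_M\oplus\Omega^1_M$ whose integrability with respect to the Courant bracket twisted by the right-hand side of (QH1) is equivalent to (QH1), while (QH3) makes the same Dirac structure a graph over $\Omega^1_M$, namely that of $\Pi$; the quasi-Poisson defect of $\Pi$ is then the Courant integrability defect, which vanishes by (QH1). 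Either way, the crux is translating (QH1) into the $\chi$-twisted Jacobi identity.
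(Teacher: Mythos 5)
The paper itself gives no proof of this theorem: it is quoted from the literature, with the direct bivector construction in \cite{AKSM02} and the Dirac-geometric formulation in \cite{BCS05}. So there is no internal argument to compare against, and your outline should be measured against those references --- which it reconstructs faithfully. Uniqueness via the dual of (QH3) (the annihilator of $\range(\omega^\flat_p)$ is $\Ker\omega_p$, that of $\range((d\mu)^*_p)$ is $\Ker(d\mu)_p$, so their sum is all of $T^*_pM$) is exactly the standard linear-algebra step; prescribing $\Pi^\sharp$ on the two generating submodules and reducing everything to well-definedness is how both cited proofs are organized; and your closing paragraph is literally the route of \cite{BCS05}, where (QH3) makes the backward image of the Cartan--Dirac structure under $\mu$ the graph of a bivector and (QH1) is the twisted Courant integrability. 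As a blind outline it contains no wrong turns, though the deferred step --- converting (QH1) into $\tfrac12[\Pi,\Pi]=\chi_M$ --- is the bulk of the actual work in either reference, so what you have is an accurate roadmap rather than a complete proof.

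Two substantive caveats. First, the overlap consistency is not a ``direct consequence of (QH2)'' alone: (QH2) only covers pairs $(\xi_M,\iota_{\xi_M}\omega)$, whereas a general relation $\iota_v\omega=\mu^*\theta$ at $p$ must be reduced to these using two further consequences of the axioms, namely the kernel description $\Ker\omega_p=\{\,\xi_M(p) : \Ad_{\mu(p)}\xi=-\xi\,\}$ and the fact that $(\delta(\theta)\circ\mu)_M$ vanishes at $p$ whenever $\theta$ annihilates $\range(d\mu)_p$. The kernel case is precisely where the constant $\tfrac14$ is tested: if $\Ad_x\xi=-\xi$ at $x=\mu(p)$, then $(\mu^{-1}d\mu-d\mu\,\mu^{-1})(\xi_M)=4\xi$, so the displayed formula correctly sends $\iota_{\xi_M}\omega=0$ to $0$. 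Second, if you run your consistency check with (QH2) exactly as printed in this paper you will hit a factor-of-$2$ mismatch with the displayed $\tfrac14$: testing the two-form of Example~\ref{eg:G-conj} against the inclusion moment map shows that (QH2) should carry the standard normalization $\omega(\xi_M,\bcdot)=\tfrac12\left(\mu^{-1}d\mu+d\mu\,\mu^{-1},\xi\right)$ (as in \cite{AMM98,Boa14}); with that correction the computation closes, since $\delta$ applied to $\tfrac12(\mu^{-1}d\mu+d\mu\,\mu^{-1},\xi)$ at $x$ yields $\tfrac12\xi+\tfrac14(\Ad_x+\Ad_x^{-1})\xi$, which matches $\xi_M-\tfrac14\left((\mu^{-1}d\mu-d\mu\,\mu^{-1})(\xi_M)\right)_M=\tfrac12\xi_M+\tfrac14\left((\Ad_x+\Ad_x^{-1})\xi\right)_M$. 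This is a typo in the paper's axiom rather than an error of yours, but it is exactly the kind of normalization your sketch silently depends on.
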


A Hamiltonian quasi-Poisson structure coming from 
a quasi-Hamiltonian structure in the above way 
is said to be \emph{non-degenerate}; 
see \cite{AKSM02} for an explicit characterization of non-degeneracy.

\begin{example}\label{eg:G-conj}
  The bracket $\{ \bcdot,\bcdot \} \colon \calO_G \otimes_\C \calO_G \to \calO_G$ defined by 
  \[
    \{ f,g \} = \frac12 (\delta^R(df), \delta^L(dg)) - \frac12 (\delta^R(dg), \delta^L(df)) 
    \quad (f,g \in \calO_G)
  \]
  is a quasi-Poisson $G$-structure on $G$ with respect to the conjugation action 
  (and does not depend on the choice of orthonormal basis) 
  and the identity map $\id \colon G \to G$ is a moment map.
  By the moment map condition, 
  the vector fields $\{ f, \bcdot \}$, $f \in \calO_G$ 
  are tangent to conjugacy classes. 
  It follows that any $G$-invariant subvariety $Z \subset G$ 
  is a quasi-Poisson $G$-subvariety of $G$, which means that the bracket $\{ \bcdot,\bcdot \}$ 
  descends to a quasi-Poisson bracket on $\calO_Z$. 
  Moreover the inclusion $Z \hookrightarrow G$ is a moment map for $Z$.
  In particular, any conjugacy class $\calC \subset G$ 
  is a Hamiltonian quasi-Poisson $G$-variety with moment map given by the inclusion. 
  In fact, the quasi-Poisson structure on $\calC$ comes from 
  the quasi-Hamiltonian two-form 
  \[
  \omega_x(\xi_G,\eta_G) = \frac12 (\xi,\Ad_x \eta) - \frac12 (\eta,\Ad_x \xi) \qquad (x \in \calC,\ \xi,\eta \in \frkg).
  \]
  If we fix $x \in \calC$, then the pullback of $\omega$ along 
  the map $\pi \colon G \to \calC$, $C \mapsto C^{-1}xC$ is given by 
  \[
  \pi^*\omega = \frac12 (dC\,C^{-1},\Ad_x(dC\,C^{-1})).
  \]
\end{example}

\begin{example}\label{eg:double}
  Let $G \times G$ act on $\bfD(G) = G \times G$ by 
  $(g,k) \cdot (C,h) = (k C g^{-1}, k h k^{-1})$. 
  Then $\bfD(G)$ is a quasi-Hamiltonian $G \times G$-space with moment map
  \[
  \mu \colon \bfD(G) \to G \times G, \quad (C,h) \mapsto (C^{-1}hC,h^{-1})
  \]
  and quasi-Hamiltonian two-form
  \[
  \omega = \frac12 (dC\,C^{-1}, \Ad_h (dC\,C^{-1})) 
  + \frac12 (dC\,C^{-1}, h^{-1}dh + dh\,h^{-1}).
  \]
\end{example}

\subsection{Reduction}

Let $H$ be another complex reductive group 
equipped with an $\Ad$-invariant non-degenerate symmetric bilinear form on 
$\frkh = \Lie H$. 

The following is the complex algebraic version of 
a particular case of \cite[Theorem~1.C]{LBS15} 
(the proof is similar).
Recall that a \emph{good quotient} of a $G$-variety $M$ 
in the sense of Seshadri~\cite{Ses72} 
is a variety $M/G$ together with 
a $G$-invariant surjective affine morphism $\pi \colon M \to M/G$ 
such that for any affine open subset $U \subset M/G$, 
the restriction $\pi^{-1}(U) \xrightarrow{\pi} U$ is an affine quotient: 
$\C[U]=\C[\pi^{-1}(U)]^G$.

\begin{proposition}\label{prp:reduction1}
  Let $M$ be a quasi-Poisson $G \times H$-variety. 

  $(1)$ Suppose that the $G$-action on $M$ has a good quotient $M/G$.
  Then the quasi-Poisson structure on $M$ descends to 
  a quasi-Poisson $H$-structure on the quotient $M/G$. 
  In particular, if $H=\{1\}$, then $M/G$ is a Poisson variety.

  $(2)$ Suppose that $M$ is equipped with a moment map 
  $\mu = (\mu_G,\mu_H) \colon M \to G \times H$  
  and let $Z \subset G$ be a $G$-invariant subvariety. 
  If the preimage $\mu_G^{-1}(Z)$ has a good quotient $\mu_G^{-1}(Z)/G$, 
  then the quasi-Poisson structure on $M$ 
  induces a quasi-Poisson $H$-structure on  
  $\mu_G^{-1}(Z)/G$ and 
  the morphism $\mu_G^{-1}(Z)/G \to H$ induced from $\mu_H$ 
  is a moment map.
\end{proposition}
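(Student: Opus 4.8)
The plan is to reduce both parts to two elementary facts about the infinitesimal action, combined with the good-quotient formalism. Throughout I work locally on $G$-saturated affine opens, where the affine quotient identifies functions on $M/G$ with $G$-invariant functions on $M$, and glue afterwards.

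\emph{Part (1).} Write $\calO_M^G$ for the sheaf of $G$-invariant functions. The first observation is that the bracket of two $G$-invariant functions is again $G$-invariant: since $\{\bcdot,\bcdot\}$ is $G\times H$-invariant, for $f,g\in\calO_M^G$ and $k\in G$ one has $\{f,g\}=\{k\cdot f,k\cdot g\}=k\cdot\{f,g\}$. Hence the bracket restricts to $\calO_M^G$ and, by the good-quotient property, descends to a skew biderivation on $\calO_{M/G}$; skew-symmetry, the Leibniz rule and $H$-invariance are inherited. The second observation controls the Jacobiator. Because $\frkg$ and $\frkh$ are orthogonal commuting summands of $\Lie(G\times H)$, the trivector splits as $\chi_{G\times H}=\chi_G+\chi_H$. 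For $f\in\calO_M^G$ we have $\xi_M(df)=0$ for all $\xi\in\frkg$, so writing $\chi_G$ as a sum of decomposables and contracting with $df\wedge dg\wedge dh$ makes every term carry a vanishing factor $\xi_M(d(\bcdot))$; thus $(\chi_G)_M$ annihilates $G$-invariant functions. Consequently the Jacobiator on $\calO_M^G$ equals $(\chi_H)_M(df,dg,dh)$, which descends to $(\chi_H)_{M/G}$ since the $H$-action descends to $M/G$. This is precisely the quasi-Poisson $H$-axiom on $M/G$; when $H=\{1\}$ we have $\chi_H=0$, giving an honest Poisson bracket.

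\emph{Part (2).} Put $N=\mu_G^{-1}(Z)$, a $G\times H$-invariant subvariety with ideal $J$ generated by $\mu_G^*I_Z$. Note that $N$ is \emph{not} a quasi-Poisson subvariety of $M$ in general (the situation is the group-valued analogue of $\mu^{-1}(0)$ being merely coisotropic), so the bracket must be built directly on the quotient. The crux is the moment map condition for the $G$-factor in function form: for $\phi\in\calO_G$, $g\in\calO_M$ one has $\{\mu_G^*\phi,g\}=(\delta_G(d\phi)\circ\mu_G)_M(g)$, where $\delta_G=\tfrac12(\delta^L+\delta^R)$ is the $\frkg$-valued field on $G$. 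Thus the Hamiltonian vector field of $\mu_G^*\phi$ is an infinitesimal $G$-action field, and therefore kills $G$-invariants:
\[
  \{\mu_G^*\phi,g\}=0\qquad(\phi\in\calO_G,\ g\in\calO_M^G).
\]
Expanding $a=\sum b_i\,\mu_G^*\psi_i\in J$ by the Leibniz rule gives $\{a,g\}=\sum\{b_i,g\}\,\mu_G^*\psi_i\in J$ for every $g\in\calO_M^G$; since the bracket of invariants is invariant, $J^G:=J\cap\calO_M^G$ satisfies $\{J^G,\calO_M^G\}\subset J^G$. By reductivity of $G$ the surjection $\calO_M\to\calO_N$ stays surjective on invariants with kernel $J^G$, so locally $\calO_{N/G}=\calO_M^G/J^G$; the displayed vanishing is exactly what makes the reduced bracket — lift invariant functions on $N$ to invariant functions on $M$, bracket, restrict to $N$ and descend — independent of the chosen lift. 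The quasi-Poisson $H$-axiom and $H$-invariance on $N/G$ then follow by replaying the two observations of Part (1) on invariant lifts, $(\chi_G)_M$ again dropping out.

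Finally, $\mu_H$ is $G$-invariant and $H$-equivariant, so it descends to $\bar\mu_H\colon N/G\to H$, and it remains to verify the moment map identity. Using the moment map condition for $\mu_H$ on $M$ in function form, for $\phi\in\calO_H$ and an invariant lift $g$ of $\bar g$ we get $\{\mu_H^*\phi,g\}=(\delta_H(d\phi)\circ\mu_H)_M(g)$; restricting to $N$ and descending, the right-hand side becomes $(\delta_H(d\phi)\circ\bar\mu_H)_{N/G}(\bar g)$ because the $H$-action and $\mu_H$ both descend, while the left-hand side is the reduced bracket $\{\bar\mu_H^*\phi,\bar g\}$. Since one-forms are generated by exact ones and both sides are $\calO$-linear in $\phi$, this yields the moment map condition on $N/G$. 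The main obstacle is not any single computation but the sheaf-theoretic bookkeeping over (possibly singular) varieties: one must check that the bracket defined on invariants over $G$-saturated affine opens glues, and that the Reynolds operator supplies the surjectivity $\calO_M^G\twoheadrightarrow\calO_N^G$ with kernel $J^G$. The conceptual heart, however, is the single vanishing $\{\mu_G^*\calO_G,\calO_M^G\}=0$, which is what makes the reduction well posed.
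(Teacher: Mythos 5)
The paper offers no proof of this proposition at all: it is introduced as ``the complex algebraic version of a particular case of \cite[Theorem~1.C]{LBS15} (the proof is similar),'' so there is no in-paper argument to compare yours against line by line. Judged on its own, your proposal is essentially correct and supplies exactly the argument the paper leaves implicit. The three load-bearing points are the right ones: the bracket of invariants is invariant; the splitting $\chi_{G \times H} = \chi_G + \chi_H$ together with the fact that $(\chi_G)_M$ annihilates triples of $G$-invariant functions, so the Jacobi anomaly seen by $\calO_M^G$ is only $(\chi_H)_M$, which descends; and, crucially for part (2), the moment-map identity gives $\{\mu_G^*\phi, g\} = \left( (\delta_G(d\phi) \circ \mu_G) \right)_M(g) = 0$ for $g \in \calO_M^G$, because the right-hand vector field is an $\calO_M$-combination of the fields $\xi_M$, $\xi \in \frkg$. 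This vanishing is indeed what makes reduction along an arbitrary invariant $Z$ well posed even though $\mu_G^{-1}(Z)$ is not a quasi-Poisson subvariety, and your descent of the $H$-moment condition (using $G$-invariance of $\mu_H$ and checking on exact one-forms, which suffices by $\calO$-linearity of $\delta_H$) is fine.

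Two points deserve more than your closing ``bookkeeping'' remark. First, $\mu_G^{-1}(Z)$ is a variety, hence reduced, so its ideal is $\sqrt{J}$ rather than your $J = (\mu_G^* I_Z)$; your Leibniz computation shows $\{J, \calO_M^G\} \subseteq J$, and to conclude that $\{\bcdot, g\}$ preserves $\sqrt{J}$ you need the characteristic-zero fact (Seidenberg) that a derivation preserving an ideal preserves its radical — one sentence, but currently missing. Second, your Reynolds-operator step ``$\calO_M^G \twoheadrightarrow \calO_N^G$ with kernel $J^G$'' requires taking invariants of $0 \to J(V) \to \calO(V) \to \calO(N \cap V) \to 0$ over $G$-stable \emph{affine} opens $V \subseteq M$ inducing the given affine cover of $N/G$; the hypothesis only provides a good quotient of $N = \mu_G^{-1}(Z)$, not of $M$, so such $V$ are not guaranteed in the stated generality, and invariant lifts may simply fail to exist locally. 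This is harmless for the paper — every space to which the proposition is applied (doubles, fission spaces, multi-fission spaces, and their fusion products) is affine, and in the affine case one can define the bracket on global invariants $\C[N]^G = \C[M]^G/(\sqrt{J})^G$ and spread it over $N/G$ by localization, since a biderivation extends canonically to localizations — but as written your lifting step is asserted rather than established. Neither caveat undermines the strategy; both are patchable along the lines just indicated.
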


We denote by $M \spqa{Z} G$ 
the above Hamiltonian quasi-Poisson $H$-variety $\mu_G^{-1}(Z)/G$ 
and call it the \emph{reduction} of $M$ by $G$ along $Z$. 
If $Z=\{1\}$, then we simply write $M \spq G = M \spqa{\{ 1 \}} G$.

When $M$ is quasi-Hamiltonian, the following also holds (see \cite[Theorem~5.1]{AMM98} and \cite[Proposition~10.6]{AKSM02}).

\begin{proposition}\label{reduction2}
  Let $M$ be a quasi-Hamiltonian $G \times H$-space 
  with moment map $\mu=(\mu_G,\mu_H) \colon M \to G \times H$
  and let $\calC \subset G$ be a conjugacy class. 
  Suppose that the $G$-action on $\mu_G^{-1}(\calC)$ is free and 
  has a geometric quotient in the sense of Seshadri, i.e., 
  it has a good quotient $\pi \colon \mu_G^{-1}(\calC) \to \mu_G^{-1}(\calC)/G$ 
  whose fibers are single orbits.  
  Then $\mu_G^{-1}(\calC)$, $\mu_G^{-1}(\calC)/G$ are non-singular  
  and the quasi-Hamiltonian two-form restricted to $\mu_G^{-1}(\calC)$ 
  descends to a two-form on $\mu_G^{-1}(\calC)/G$, 
  which gives the structure of a quasi-Hamiltonian $H$-space on 
  $\mu_G^{-1}(\calC)/G$ 
  with moment map induced from $\mu_H$ 
  $($in particular, $\mu_G^{-1}(\calC)/G$ is a smooth symplectic variety if $H=\{ 1\})$. 
  The corresponding quasi-Poisson structure on $\mu_G^{-1}(\calC)/G = M \spqa{\calC} G$ 
  coincides with the one given in the above proposition.
\end{proposition}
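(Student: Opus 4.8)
The plan is to treat this as the quasi-Hamiltonian analogue of symplectic reduction at a coadjoint orbit, carried out at the level of two-forms in the manner of the cited works. Write $N = \mu_G^{-1}(\calC)$, fix $p \in N$ with $x = \mu_G(p) \in \calC$, and set $\zeta(v) = \mu_G(p)^{-1}(d\mu_G)_p(v) \in \frkg$ for the left-trivialized $G$-moment differential, so that (QH2) for $\xi \in \frkg$ reads $\omega_p(\xi_M,v) = (\zeta(v)+\Ad_x\zeta(v),\xi)$. First I would show $N$ is smooth by checking that $\mu_G$ meets $\calC$ cleanly, i.e. $R_p + \range(\Ad_{x^{-1}}-\id) = \frkg$, where $R_p = \{\,\zeta(v) : v \in T_pM\,\}$ is the left-trivialized image of $(d\mu_G)_p$ and $\range(\Ad_{x^{-1}}-\id)$ is the left-trivialization of $T_x\calC$. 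Since $\Ad_x$ is orthogonal, this is equivalent to $R_p^\perp \cap \Ker(\Ad_x-\id) = 0$. Given $\eta$ in this intersection, the displayed form of (QH2) gives $\omega_p(\eta_M,v) = 2(\zeta(v),\eta) = 0$ for all $v$ (using $\Ad_x\eta=\eta$ and $\eta \in R_p^\perp$), while equivariance of $\mu_G$ gives $\zeta(\eta_M) = \Ad_{x^{-1}}\eta-\eta = 0$, so $\eta_M(p) \in \Ker\omega_p \cap \Ker(d\mu_G)_p = \{0\}$ by (QH3); freeness then forces $\eta = 0$. Hence $N$ is smooth with $T_pN = (d\mu_G)_p^{-1}(T_x\calC)$, and smoothness of the quotient follows from freeness together with the assumed geometric quotient.

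Next I would produce the reduced two-form. The restriction of $\omega$ to $N$ is $G$-invariant, but the same computation shows that its contraction with an orbit vector $\xi_M$ recovers the conjugacy-class two-form of Example~\ref{eg:G-conj} and does not vanish on all of $T_pN$, so it is not horizontal and does not descend directly. The clean route, matching the statement, is point reduction: fix $x \in \calC$, put $Z = Z_G(x)$, and consider the fiber $P = (\mu_G|_N)^{-1}(x)$, which is smooth with $T_pP = \Ker(d\mu_G)_p$ because $\mu_G|_N \colon N \to \calC$ is a submersion. For $\xi \in \Lie Z$ and $v \in T_pP$ one has $\zeta(v) = 0$, whence $\omega_p(\xi_M,v) = 0$; thus the restriction of $\omega$ to $P$ is $Z$-basic and descends to a two-form $\omega_{\mathrm{red}}$ on $P/Z$. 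The saturation isomorphism $P/Z \xrightarrow{\sim} N/G$ transports $\omega_{\mathrm{red}}$ to $M \spqa{\calC} G$; one checks that on $N$ this equals the restriction of $\omega$ modified by the pullback of the conjugacy-class two-form of Example~\ref{eg:G-conj}.

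I would then verify that $(N/G,\omega_{\mathrm{red}},\mu_{\mathrm{red}})$, with $\mu_{\mathrm{red}}$ induced from $\mu_H$, is a quasi-Hamiltonian $H$-space. Equivariance and (QH2) for $\mu_{\mathrm{red}}$ follow by restricting the corresponding statements for $\mu_H$ to $P$ and descending, using that $\mu_G$ is constant on $P$. Axiom (QH1) comes from pulling the Cartan three-form identity for $\mu$ back to $P$: the $\mu_G$-contribution drops out since $\mu_G$ is constant there, leaving exactly the identity for $\mu_{\mathrm{red}}$. I expect the main obstacle to be (QH3): one must identify $\Ker(\omega|_P)_p$ with precisely the vertical space $\{\,\xi_P : \xi \in \Lie Z\,\}$, so that $\Ker(\omega_{\mathrm{red}}) \cap \Ker(d\mu_{\mathrm{red}}) = 0$. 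This is the heart of the argument and is where freeness and the original (QH3) are combined: a lift to $P$ of a null vector of $\omega_{\mathrm{red}}$ annihilating $d\mu_{\mathrm{red}}$ yields, modulo orbit directions, a vector in $\Ker\omega_p \cap \Ker(d\mu)_p$, which (QH3) kills, forcing the original vector to be vertical.

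Finally, to prove the last assertion I would compare the two quasi-Poisson $H$-structures on $M \spqa{\calC} G$. By \thmref{thm:qp-qham} the reduced quasi-Hamiltonian space carries a canonical non-degenerate quasi-Poisson structure with bivector $\Pi_{\mathrm{red}}$ determined by $\omega_{\mathrm{red}}$, while \prpref{prp:reduction1} descends the quasi-Poisson structure of $M$ (itself attached to $\omega$ via \thmref{thm:qp-qham}) to a quasi-Poisson structure on the same quotient. To see these agree, I would restrict the defining relation of \thmref{thm:qp-qham} for $(\Pi,\omega)$ on $M$ to $P$: since $\mu_G$ is constant on $P$, its contribution to $\mu^{-1}d\mu - d\mu\,\mu^{-1}$ vanishes, so the descended bivector satisfies exactly the relation of \thmref{thm:qp-qham} with respect to $\omega_{\mathrm{red}}$ and $\mu_{\mathrm{red}}$. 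The uniqueness clause of \thmref{thm:qp-qham} then identifies it with $\Pi_{\mathrm{red}}$, which is the desired coincidence.
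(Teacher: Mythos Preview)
The paper does not give its own proof of this proposition; it is stated as a known result with citations to \cite[Theorem~5.1]{AMM98} and \cite[Proposition~10.6]{AKSM02}. Your sketch faithfully reconstructs the standard argument from those references---transversality of $\mu_G$ to $\calC$ from freeness and (QH3), point reduction on a fiber $\mu_G^{-1}(x)$ with the identification $\mu_G^{-1}(x)/Z_G(x) \simeq \mu_G^{-1}(\calC)/G$, verification of (QH1)--(QH3) for the reduced data, and the quasi-Poisson comparison via the uniqueness clause of \thmref{thm:qp-qham}---so there is no independent argument in the paper to compare against.
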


\begin{remark}
  If we allow the $G$-action on $\mu_G^{-1}(\calC)$ to have finite stabilizers in the above, 
  then $M \spqa{\calC} G$ may have quotient singularities, but still has 
  a quasi-Hamiltonian $H$-structure if we regard it as an orbifold (a smooth Deligne--Mumford stack).
\end{remark}

\begin{example}
  For any conjugacy class $\calC \subset G$, 
  the reduction $\bfD(G) \spqa{\calC^{-1}} G$ 
  of the double along the inverse conjugacy class $\calC^{-1}$ 
  by the action of the second $G$-factor is isomorphic to $\calC$.
\end{example}

\subsection{Fusion and gluing}

\begin{proposition}[{\cite[Propositions~5.1, 10.7]{AKSM02}}]
  Let $M$ be a Hamiltonian quasi-Poisson $G \times G \times H$-variety 
  with moment map $(\mu_1,\mu_2,\mu_H) \colon M \to G \times G \times H$. 
  Define $\frkg$-valued vector fields $\delta^i \in \Gamma(M,\Theta_M \otimes_\C \frkg)$, $i=1,2$ by 
  \[
    (\delta^i_M(\theta), \xi) = \xi^i_M(\theta) \quad (\theta \in \Omega^1_M),
  \]
  where $\xi \mapsto \xi^i_M$ is the infinitesimal action of the $i$-th $G$-factor of $G \times G \times H$.
  Then  
  \[
    \{ f,g \}_{\mathrm{fus}} \coloneqq \{ f,g \} 
    - \frac12 (\delta^1(df), \delta^2(dg)) + \frac12 (\delta^1(dg), \delta^2(df))  
    \quad (f,g \in \calO_M)    
  \]
  defines a quasi-Poisson structure on $M$ for the diagonal $G \times H$-action 
  with moment map $(\mu_1 \cdot \mu_2,\mu_H) \colon M \to G \times H$. 

  If $M$ is smooth and non-degenerate with quasi-Hamiltonian two-form $\omega$, 
  then so is $(M,\{ \bcdot,\bcdot \}_{\mathrm{fus}})$ 
  and the corresponding quasi-Hamiltonian two-form is given by
  \[
    \omega_{\mathrm{fus}} = \omega - \frac12 (\mu_1^{-1}d\mu_1, d\mu_2\, \mu_2^{-1}).
  \]
\end{proposition}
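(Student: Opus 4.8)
The plan is to prove the fusion proposition by reducing to the two separate statements it makes—first the quasi-Poisson (bracket) claim, then the quasi-Hamiltonian (two-form) claim in the non-degenerate case—and verifying each against the definitions given earlier. For the bracket statement, I would first confirm that $\{\bcdot,\bcdot\}_{\mathrm{fus}}$ is a derivation in each argument and $G\times H$-invariant for the diagonal action; the added term $-\tfrac12(\delta^1(df),\delta^2(dg))+\tfrac12(\delta^1(dg),\delta^2(df))$ is manifestly a biderivation and skew-symmetric, so this is routine once one checks that the $\delta^i$ are themselves equivariant. The substantive point is the modified Jacobi identity: I would compute the Jacobiator of $\{\bcdot,\bcdot\}_{\mathrm{fus}}$ and show it equals $\chi_M(df,dg,dh)$ for the \emph{diagonal} action, i.e.\ with $\chi$ built from $\delta^1+\delta^2$.

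The key mechanism here is bookkeeping of the trilinear terms. Writing $\delta=\delta^1+\delta^2$ for the diagonal action, one has $\chi^{\mathrm{diag}}_M = (\chi)^1_M + (\chi)^2_M + (\text{cross terms in }\delta^1,\delta^2)$, and the original structure satisfies the Jacobi identity with only the $(\chi)^1_M+(\chi)^2_M$ part (plus whatever the $H$-factor contributes, which is unaffected). I would expand the Jacobiator of the fusion bracket using the Leibniz rule and the fact that the $\xi^i_M$ commute across factors, collecting the extra contributions produced by differentiating the fusion correction term through the original bracket. The expectation is that these extra contributions reassemble exactly into the cross terms of $\chi^{\mathrm{diag}}$, using the defining identity $\chi(\alpha,\beta,\gamma)=\langle\alpha,[\beta^\vee,\gamma^\vee]\rangle$ together with the $\Ad$-invariance of $(\bcdot,\bcdot)$ to move brackets across the pairing. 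This is the main obstacle: the computation is a finite but delicate rearrangement, and the whole identity hinges on the correct sign and coefficient $\tfrac12$ in the correction term conspiring with the factor-of-$\tfrac12$ normalizations in $\delta$ and in $\chi$.

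Next I would verify the moment map condition for $\mu_1\cdot\mu_2$ under the diagonal action. Using $\Pi_{\mathrm{fus}}=\Pi-\tfrac12\,\delta^1\wedge\delta^2$ (the bivector form of the correction), I would compute $\Pi_{\mathrm{fus}}((\mu_1\cdot\mu_2)^*\theta,\bcdot)$ via the Leibniz rule for pulling back along the product map $G\times G\to G$, and check it equals $(\delta^{\mathrm{diag}}(\theta)\circ(\mu_1\cdot\mu_2))_M$. Here the two terms coming from $d(\mu_1\mu_2)=d\mu_1\,\mu_2+\mu_1\,d\mu_2$ combine with the cross term from $-\tfrac12\,\delta^1\wedge\delta^2$; invoking the moment map property of $\mu_1$ and $\mu_2$ separately for the undeformed $\Pi$, one arrives at the diagonal $\delta$ applied after the product map. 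Again the crux is that the correction precisely absorbs the discrepancy between $\tfrac12(\delta^L+\delta^R)$ evaluated at the product versus at the individual factors.

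Finally, for the non-degenerate case, I would appeal to \thmref{thm:qp-qham}: since $(M,\omega)$ is quasi-Hamiltonian, its quasi-Poisson structure is the one determined by $\omega$, and it suffices to exhibit a two-form $\omega_{\mathrm{fus}}$ satisfying (QH1)--(QH3) for the diagonal action whose associated quasi-Poisson structure is $\{\bcdot,\bcdot\}_{\mathrm{fus}}$. I would take $\omega_{\mathrm{fus}}=\omega-\tfrac12(\mu_1^{-1}d\mu_1,d\mu_2\,\mu_2^{-1})$ and check (QH1) by differentiating and using (QH1) for $\omega$ together with the Maurer--Cartan structure equations for $\mu_1^{-1}d\mu_1$ and $d\mu_2\,\mu_2^{-1}$; check (QH2) for the diagonal moment map $\mu_1\cdot\mu_2$ by a direct substitution using (QH2) for $\omega$; and deduce (QH3) from the original non-degeneracy, since the correction term vanishes on $\Ker\omega\cap\Ker d\mu$. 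The remaining task, matching the quasi-Poisson structure of $\omega_{\mathrm{fus}}$ with $\{\bcdot,\bcdot\}_{\mathrm{fus}}$, follows from the uniqueness clause of \thmref{thm:qp-qham} once the inversion formula relating $\Pi_{\mathrm{fus}}$ and $\omega_{\mathrm{fus}}$ is confirmed on vectors of the form $\iota_v\omega_{\mathrm{fus}}$. I expect (QH1) for $\omega_{\mathrm{fus}}$ to be the most error-prone verification, as the Cartan three-form on the right-hand side must be shown to transform correctly under passage from the two moment maps to their product.
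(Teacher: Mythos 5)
First, note that the paper contains no proof of this proposition: it is stated with a citation to \cite[Propositions~5.1, 10.7]{AKSM02} and used as a black box, so the only meaningful comparison is with the argument in the cited source. Your sketch reconstructs exactly that argument's architecture: for the bracket statement, the Jacobiator bookkeeping you describe is the coordinate form of the Schouten-bracket computation in AKSM02 (writing the correction as a bivector $\tfrac12\sum_a (e_a)^1_M \wedge (e_a)^2_M$ for an orthonormal basis $(e_a)$ of $\frkg$, one has $[\Pi,\psi_M]=0$ by the $G\times G$-invariance of the original bracket, and $[\psi_M,\psi_M]$ produces precisely the cross terms by which $\chi$ for the diagonal embedding differs from $\chi^1+\chi^2$; the $H$-factor is inert throughout, as you say). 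Your treatment of the moment map condition and the appeal to the uniqueness clause of \thmref{thm:qp-qham} to identify the quasi-Poisson structure of $\omega_{\mathrm{fus}}$ with $\{\bcdot,\bcdot\}_{\mathrm{fus}}$ are likewise the standard route.

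There is, however, one step that would fail as written: your verification of (QH3) for $\omega_{\mathrm{fus}}$. You argue that the correction term vanishes on $\Ker\omega\cap\Ker d\mu$, but the relevant kernel for the fused data is $\Ker(\omega_{\mathrm{fus}})_p \cap \Ker d(\mu_1\mu_2)_p \cap \Ker (d\mu_H)_p$, and $\Ker d(\mu_1\mu_2)$ is in general strictly larger than $\Ker d\mu_1 \cap \Ker d\mu_2$: the product rule only forces $\mu_1^{-1}d\mu_1(v)$ to cancel $d\mu_2\,\mu_2^{-1}(v)$ after conjugation, not each to vanish. So a vector $v$ can be annihilated by $d(\mu_1\mu_2)$ while the correction term $-\tfrac12(\mu_1^{-1}d\mu_1, d\mu_2\,\mu_2^{-1})$ still pairs nontrivially with it, and your reduction to (QH3) for the original $\omega$ breaks down. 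The fix used in the literature (the fusion theorem of Alekseev--Malkin--Meinrenken) is a genuine pointwise linear-algebra argument, most cleanly run through the kernel characterization valid on any quasi-Hamiltonian space, $\Ker\omega_p = \{\,\xi_M(p) \mid \Ad_{\mu(p)}\xi = -\xi\,\}$, which lets one identify $\Ker(\omega_{\mathrm{fus}})_p$ explicitly and intersect it with the kernel of $d(\mu_1\mu_2)$; alternatively, one can invoke the AKSM02 characterization of non-degeneracy of quasi-Poisson structures, which is stable under fusion, and bypass (QH3) entirely. With that step repaired, your plan is a faithful reconstruction of the cited proof.
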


We call this procedure the (internal) \emph{fusion}.
For instance, if $M_i$ is a Hamiltonian quasi-Poisson $G \times H_i$-variety for $i=1,2$, 
then the product $M_1 \times M_2$ is 
a Hamiltonian quasi-Poisson $G \times H_1 \times G \times H_2$-variety, 
from which we obtain a Hamiltonian quasi-Poisson $G \times H_1 \times H_2$-variety
by fusing the two $G$-factors.
We denote this Hamiltonian quasi-Poisson $G \times H_1 \times H_2$-variety by 
$M_1 \circledast_G M_2$ (or simply $M_1 \circledast M_2$) 
and call it the \emph{fusion product} of $M_1$ and $M_2$.
If we can perform the reduction 
$(M_1 \circledast M_2) \spq G$, 
then we call it the \emph{gluing} of $M_1$ and $M_2$
and denote it by $M_1 \glue{G} M_2$.

\section{Fission spaces and their gluing}\label{sec:fission}

In \cite{Boa09,Boa14}, Boalch introduced a new class of quasi-Hamiltonian spaces, 
called (higher) \emph{fission spaces}, 
and used them to construct Poisson structures on (untwisted) wild character varieties.
From now on, we fix a maximal torus $T \subset G$ with Lie algebra $\frkt \subset \frkg$.

Let $P$ be a parabolic subgroup of $G$ containing the maximal torus $T$ 
and $H$ be a unique Levi subgroup of $P$ containing $T$. 
Let $U^+$ be the unipotent radical of $P$ and $U^-$ be that of the opposite parabolic subgroup. 
For a positive integer $r$, put 
\[
  \FS{G}{H}^r = G \times H \times (U^+ \times U^-)^r,
\]
which we call a fission space. 
We denote an element of $(U^+ \times U^-)^r$ by 
$(u_1,u_2, \dots ,u_{2r})$, where $u_{\mathrm{odd}} \in U^+$ and $u_{\mathrm{even}} \in U^-$. 
Let $G \times H$ act on $\FS{G}{H}^r$ by 
\[
  (g,k) \cdot (C,h,u_1, \dots ,u_{2r}) = 
  (kCg^{-1},khk^{-1},ku_1 k^{-1}, \dots ,k u_{2r} k^{-1}).
\]
The bilinear form $(\bcdot ,\bcdot)$ on $\frkg$ restricts to 
a non-degenerate $\Ad$-invariant symmetric bilinear form on $\frkh = \Lie H$.

\begin{theorem}[{\cite[Theorem~3.1]{Boa14}}]
  The $G \times H$-variety $\FS{G}{H}^r$ is a quasi-Hamiltonian $G \times H$-space 
  with moment map 
  \[
    \mu \colon (C,h,u_1, \dots ,u_{2r}) \mapsto (C^{-1} h u_1 u_2 \cdots u_{2r} C, h^{-1}),
  \]
  and quasi-Hamiltonian two-form defined by 
    \begin{equation}\label{eq:form}
      \begin{split}
    2\omega &= (dC\,C^{-1},\Ad_b(dC\,C^{-1})) + (dC\,C^{-1},db\,b^{-1})   \\ 
    &\quad + (dC_0\,C_0^{-1},h^{-1}dh) - \sum_{j=0}^{2r-1} (C_j^{-1}dC_j,C_{j+1}^{-1}dC_{j+1}),
      \end{split}
  \end{equation}
  where $C_j = u_{j+1} u_{j+2} \cdots u_{2r} C$ (so that $C_{2r}=C$), $b=h u_1 u_2 \cdots u_{2r}$.  
\end{theorem}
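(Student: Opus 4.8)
The plan is to verify the three axioms (QH1)--(QH3) directly, but to organize the computation so that the dependence on $r$ is handled inductively by fusion rather than all at once. First I would record the auxiliary quantities in the statement, $C_j = u_{j+1}\cdots u_{2r}C$ and $b = hu_1\cdots u_{2r}$, and the telescoping relations $C_{j-1} = u_j C_j$ and $C_0 = u_1\cdots u_{2r}C$, so that $b = h C_0 C^{-1}$; the whole role of the variables $C_j$ is that the logarithmic derivatives $C_j^{-1}dC_j$ interpolate between $C^{-1}dC$ and $b^{-1}db$ one unipotent factor at a time. Throughout, the essential tool is the Maurer--Cartan identity $d\theta = -\tfrac12[\theta,\theta]$ for $\theta = g^{-1}dg$ (equivalently $d(dg\,g^{-1}) = dg\,g^{-1}\wedge dg\,g^{-1}$), together with the $\Ad$-invariance of $(\bcdot,\bcdot)$.

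For the base case $r=1$ I would treat $\FS{G}{H}^1 = G\times H\times U^+\times U^-$ by hand. Axiom (QH2), the moment-map condition $\omega(\xi_M,\bcdot) = (\mu^{-1}d\mu + d\mu\,\mu^{-1},\xi)$, is the most mechanical: I would differentiate $\mu = (C^{-1}bC,h^{-1})$, write out the fundamental vector fields of the $G\times H$-action in the coordinates $(C,h,u_1,u_2)$, contract them into the explicit two-form $\eqref{eq:form}$, and check the $G$- and $H$-components separately. Axiom (QH1), $d\omega = \tfrac1{12}(\mu^{-1}d\mu,[\mu^{-1}d\mu,\mu^{-1}d\mu])$, is the genuine computation: I would differentiate each of the four summands of $2\omega$ via Maurer--Cartan, and the cross terms produced by the $U^\pm$-factors are exactly what must telescope against the cubic moment-map term on the right. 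This closedness identity, with the unipotent factors genuinely present, is where I expect the main difficulty to lie, since it is here that the precise coefficients and the $C_j$-telescoping have to conspire to reproduce the right-hand side.

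For the inductive step I would realize $\FS{G}{H}^r$ as a fusion that appends one pair $(u_{2r-1},u_{2r})\in U^+\times U^-$ to $\FS{G}{H}^{r-1}$, so that the $G$-moment map acquires the extra right factor in the prescribed way and the fusion correction $-\tfrac12(\mu_1^{-1}d\mu_1, d\mu_2\,\mu_2^{-1})$ furnished by the fusion Proposition reproduces precisely the new cross terms $-(C_j^{-1}dC_j, C_{j+1}^{-1}dC_{j+1})$ of $\eqref{eq:form}$. Since fusion preserves the quasi-Hamiltonian axioms and non-degeneracy, (QH1) and (QH2) for general $r$ then follow from the base case once this bookkeeping is carried out; the remaining labor is to verify that the two-form produced by fusion agrees on the nose with $\eqref{eq:form}$, i.e.\ that the index shifts in the sum $\sum_{j=0}^{2r-1}$ match the added factors.

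Finally, for (QH3) I would again use that non-degeneracy is preserved under fusion, reducing the claim to the base case. There I would compute $\Ker\omega_p$ and intersect it with $\Ker(d\mu)_p$, exploiting that the $U^\pm$-directions are detected by the cross terms of $\omega$ while the $C$-direction is detected by $d\mu$; a tangent vector killed by both is then forced to have vanishing $C$-, $h$-, and $u_i$-components in turn. The overall architecture is thus: (i) prove the case $r=1$ directly, the closedness identity (QH1) being the crux; (ii) propagate to all $r$ by fusion, matching the two-forms term by term; and (iii) deduce (QH3) from fusion-invariance together with the base case.
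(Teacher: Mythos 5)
First, note that the paper you are trying to reconstruct contains no proof of this statement: it is imported verbatim from \cite{Boa14} (Theorem~3.1 there), where Boalch proves it by a \emph{direct} verification of the axioms (QH1)--(QH3) for all $r$ at once, using exactly the telescoping variables $C_j$ appearing in \eqref{eq:form}. In other words, the actual proof is your ``base case'' computation carried out uniformly in $r$, with the sum $-\sum_{j=0}^{2r-1}(C_j^{-1}dC_j,C_{j+1}^{-1}dC_{j+1})$ handled by Maurer--Cartan and telescoping; no induction is performed.

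The genuine gap is in your inductive step, which cannot be implemented with the fusion machinery you invoke. Fusion (internal or of a product $M_1 \times M_2$) requires \emph{both} constituents to already be quasi-Hamiltonian spaces for the relevant group, and the extra pair $(u_{2r-1},u_{2r}) \in U^+ \times U^-$ carries no such structure: $G$ does not act on $U^+ \times U^-$ at all (in $\FS{G}{H}^r$ the $u_i$ are conjugated only by the $H$-factor, which normalizes $U^\pm$), so a $G$-fusion is unavailable, and a would-be $H$-moment map such as $(u^+,u^-) \mapsto u^+u^-$ does not take values in $H$, so an $H$-fusion fails as well. A dimension count rules out the other natural attempt: gluing $\FS{G}{H}^{r-1}$ with $\FS{G}{H}^{1}$ along $G$ yields dimension $2\dim H + 2r\dim U^+$, whereas $\dim \FS{G}{H}^r = \dim G + \dim H + 2r \dim U^+$, so these agree only when $H=G$. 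The one decomposition of this kind that does exist in the literature --- the gluing $\calA \simeq \calA(1) \glue{H_2} \cdots \glue{H_r} \calA(r)$ of fission spaces for the nested groups $\FS{H_{j+1}}{H_j}^j$, used in \prpref{prp:triangular} of this paper via \cite[Theorem~6.4]{Boa14} --- degenerates to a triviality when $P_1 = \cdots = P_r$ (then $U_j^\pm \cap H_{j+1} = \{1\}$ for $j<r$), so it cannot peel off a single Stokes pair; and its proof in any case presupposes the present theorem, so invoking it here would be circular. The repair is simply to abandon the induction: your $r=1$ verification, organized around the relations $C_{j-1}=u_j C_j$ and $b = hC_0C^{-1}$ exactly as you set them up, goes through verbatim for arbitrary $r$, and the same applies to your (QH3) analysis, which must likewise be done directly rather than deduced from fusion-invariance. (A genuinely different route does exist --- the Dirac-geometric/quilted-surface approach of \cite{LBS15} --- but it is unrelated to the fusion induction proposed.)
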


More generally, let 
\[
  P_1 \subset P_2 \subset \cdots \subset P_r
\]
be an increasing sequence of parabolic subgroups of $G$ 
containing $T$.
For $j=1,2, \dots ,r$, let $U_j^+$, $U_j^-$, $H_j$ 
be the unipotent radical of $P_j$, 
that of the opposite parabolic subgroup, 
the Levi subgroup of $P_j$ containing $T$, respectively.
Then $H_1 \subset H_2 \subset \cdots \subset H_r$ 
and $U_1^\pm \supset U_2^\pm \supset \cdots \supset U_r^\pm$.
Put 
\[
  \calA = G \times H_1 \times \prod_{j=1}^r (U_j^+ \times U_j^-),
\]
which we call a \emph{multi-fission space}. 
Note that this is 
a $G \times H_1$-invariant closed subvariety 
of a fission space $\FS{G}{H_1}^r$, 
and equal to $\FS{G}{H_1}^r$ if $P_1 = P_2 = \cdots = P_r$. 

\begin{proposition}\label{prp:multi}
  The multi-fission space $\calA \subset \FS{G}{H_1}^r$ 
  is a quasi-Hamiltonian $G \times H_1$-space, 
  where the moment map and the quasi-Hamiltonian two-form 
  are the restrictions of those for $\FS{G}{H_1}^r$.
\end{proposition}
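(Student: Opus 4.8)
The plan is to verify the three axioms (QH1)--(QH3) for the restricted data, exploiting that $\calA$ is a $G\times H_1$-invariant closed subvariety of $\FS{G}{H_1}^r$. First note that $\calA$ is smooth, being a product of (unipotent) algebraic groups. Write $\iota\colon \calA\hookrightarrow \FS{G}{H_1}^r$ for the inclusion, so that $\omega_\calA=\iota^*\omega$ and $\mu_\calA=\mu\circ\iota$. Since $\calA$ is $G\times H_1$-invariant, every fundamental vector field $\xi_\calA$ (for $\xi\in\frkg\oplus\frkh_1$) is tangent to $\calA$ and equals the restriction of $\xi_{\FS{G}{H_1}^r}$. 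Then (QH1) follows from $d\omega_\calA=\iota^*d\omega$ together with $\mu_\calA^{-1}d\mu_\calA=\iota^*(\mu^{-1}d\mu)$, and (QH2) follows by evaluating the ambient identity on vectors tangent to $\calA$; both reduce formally to the corresponding axioms for $\FS{G}{H_1}^r$ established in \cite[Theorem~3.1]{Boa14}.

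The substantive point is (QH3), which is not inherited automatically, since restricting a two-form to a subspace can enlarge its kernel. Fix $p=(C,h,u_1,\dots,u_{2r})\in\calA$ and set $W=T_p\calA\subset V=T_p\FS{G}{H_1}^r$. A complement $N$ of $W$ in $V$ is given by the ``removed'' unipotent directions, one pair $(\frku_1^+/\frku_j^+)\oplus(\frku_1^-/\frku_j^-)$ for each level $j\ge 2$, sitting in the slots $u_{2j-1}\in U_1^+$, $u_{2j}\in U_1^-$. Writing $W^{\perp}$ for the $\omega_p$-orthogonal of $W$, one has $\Ker(\omega_\calA)_p=W\cap W^\perp$, so (QH3) for $\calA$ amounts to the linear-algebra assertion $W\cap W^\perp\cap\Ker(d\mu_\calA)_p=\{0\}$. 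If $v\in W$ lies in this intersection and we knew in addition that $\omega_p(v,N)=0$, then $\omega_p(v,\cdot)$ would vanish on all of $V=W\oplus N$, whence $v\in\Ker\omega_p\cap\Ker(d\mu)_p=\{0\}$ by (QH3) for the ambient space; thus everything comes down to controlling the pairing of $v$ with the removed directions $N$.

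I would carry this out by rerunning Boalch's nondegeneracy computation for $\FS{G}{H_1}^r$, tracking the explicit two-form \eqref{eq:form}. Setting $\alpha_j=C_j^{-1}dC_j$, the coupling term $-\sum_{j}(\alpha_j,\alpha_{j+1})$ records the pairings between the increments $\Ad_{C_{j+1}^{-1}}(u_{j+1}^{-1}du_{j+1})$, and the whole argument rests on the single structural fact that $(\bcdot,\bcdot)$ restricts to a perfect pairing $\frku^+\times\frku^-\to\C$ between opposite nilradicals. For $\calA$ the only change is that at level $j$ the relevant spaces shrink from $\frku_1^\pm$ to $\frku_j^\pm$; but $U_j^+,U_j^-$ are the opposite unipotent radicals of the parabolic $P_j$, so $(\bcdot,\bcdot)\colon\frku_j^+\times\frku_j^-\to\C$ is again perfect, and Boalch's elimination of the unipotent variables goes through to force $v=0$. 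I expect the main obstacle to be precisely the bookkeeping in this last step: the coupling in \eqref{eq:form} is a ``staircase'' linking each $U^+$-slot to the adjacent $U^-$-slots of neighbouring levels, so with the levels now of different sizes one must check carefully that the cross terms between $W$ and $N$ do not obstruct the perfect pairings used to solve for the removed directions.
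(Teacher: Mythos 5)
Your formal reductions are sound: $\calA$ is smooth and $G \times H_1$-invariant, fundamental vector fields restrict to it, so (QH1) and (QH2) pull back along the inclusion, and you correctly isolate (QH3) as the only substantive point, since restricting a two-form can enlarge its kernel. But that point is exactly where your proposal stops being a proof: the elimination argument is announced (``rerun Boalch's nondegeneracy computation''), not carried out, and the one structural input you cite --- that $(\bcdot,\bcdot)$ restricts to a perfect pairing $\frku_j^+ \times \frku_j^- \to \C$ at each level --- is not by itself sufficient. As you yourself note, the staircase term $-\sum_j (\alpha_j,\alpha_{j+1})$ in \eqref{eq:form} couples slots of \emph{different} levels, and for $j \neq j'$ the pairing $(\bcdot,\bcdot) \colon \frku_j^+ \times \frku_{j'}^- \to \C$ is degenerate (the dimensions do not even match), so the levelwise perfect pairings alone cannot solve for the removed directions. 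What is needed to close the elimination is the finer orthogonal block decomposition $\frku_j^\pm = \mathfrak{v}_j^\pm \oplus \frku_{j+1}^\pm$ with $\mathfrak{v}_j^\pm = \frku_j^\pm \cap \frkh_{j+1}$ and $(\mathfrak{v}_j^\pm, \frku_{j+1}^\mp) = 0$ --- the infinitesimal form of the statement that $V_j^\pm, V_{j+1}^\pm, \dots, V_r^\pm$ directly span $U_j^\pm$. Without identifying this ingredient, the ``bookkeeping'' you flag as the main obstacle is a genuine gap, not a routine check.

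That missing ingredient is precisely what the paper's proof runs on, but used globally rather than infinitesimally, which lets it bypass the computation altogether. The paper observes that $P_j \cap H_{j+1}$ is a parabolic subgroup of $H_{j+1}$ with unipotent radical $V_j^+ = U_j^+ \cap H_{j+1}$ (and opposite $V_j^-$), that the product maps $V_j^\pm \times V_{j+1}^\pm \times \cdots \times V_r^\pm \to U_j^\pm$ are isomorphisms of varieties, and then invokes Boalch's gluing theorem \cite[Theorem~6.4]{Boa14} to obtain a $G \times H_1$-equivariant isomorphism $\calA \simeq \calA(1) \glue{H_2} \calA(2) \glue{H_3} \cdots \glue{H_r} \calA(r)$ with $\calA(j) = \FS{H_{j+1}}{H_j}^j$, under which the moment map and two-form pull back to exactly the restrictions from $\FS{G}{H_1}^r$. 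Since each $\calA(j)$ is an honest fission space and gluing stays within the quasi-Hamiltonian class, all three axioms --- including the nondegeneracy you would have to fight for --- are inherited wholesale. If you want to keep your direct approach, supplying the orthogonal decomposition above should in principle let your elimination go through, but note that the same decomposition is the entry ticket to the paper's much shorter structural argument.
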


\begin{proof}
  For each $j=1,2, \dots ,r$, the intersection $P_j \cap H_{j+1}$ 
  is a parabolic subgroup of $H_{j+1}$ with unipotent radical $V_j^+ \coloneqq U_j^+ \cap H_{j+1}$ 
  and $V_j^- \coloneqq U_j^- \cap H_{j+1}$ is that of the opposite parabolic subgroup. 
  Furthermore, the product map 
  \[
    V_j^\pm \times V_{j+1}^\pm \times \cdots \times V_r^\pm \to U_j^\pm
  \] 
  is an isomorphism of varieties; in other words, 
  $V_j^\pm, V_{j+1}^\pm, \dots ,V_r^\pm$ directly span $U_j^\pm$ in the sense of Borel~\cite{Bor}. 
  Thus \cite[Theorem~6.4]{Boa14} shows that there exists 
  a $G \times H_1$-equivariant isomorphism of varieties 
  \[
    \calA \simeq \calA(1) \glue{H_2} \calA(2) \glue{H_3} \cdots \glue{H_r} \calA(r), 
  \]
  where $\calA(j)$ is the fission space
  \[
    \calA(j) \coloneqq H_{j+1} \times H_j \times (V_j^+ \times V_j^-)^j = \FS{H_{j+1}}{H_j}^j, 
  \]
  along which the pull-back of the moment map and the quasi-Hamiltonian two-form 
  are exactly the restrictions of those for $\FS{G}{H_1}$. 
\end{proof}

\section{Wild character varieties}\label{sec:wcv}

In this section we briefly recall the (untwisted) wild character varieties following \cite{Boa14}. 

First, we introduce a building piece of wild character varieties. 
Let $z$ be a local coordinate centered at the marked point $z=0$ 
on a pointed Riemann surface.
An (untwisted) \emph{irregular type} at $z=0$ is an element of 
$z^{-1}\frkt[z^{-1}]$. 
Take an irregular type 
$Q(z)=\sum_{j=1}^r Q_j z^{-j} \in z^{-1}\frkt[z^{-1}]$, $Q_r \neq 0$. 
For a root $\alpha \in \frkt^*$ of $G$ relative to $T$, put
\[
  q_\alpha(z) = \langle \alpha, Q \rangle 
  = \sum_{j=1}^r \langle \alpha, Q_j \rangle z^{-j} 
  \in z^{-1}\C[z^{-1}].
\]
A direction $d \in [0,2\pi) \simeq S^1$ is called a \emph{singular direction}  
supported by a root $\alpha$ if 
$\exp(q_\alpha(z))$ has maximal decay as $z \to 0$ in the direction $d$, 
i.e., if $\langle \alpha, Q_r \rangle e^{-\sqrt{-1}d r}$ is a negative real number.
Let $s$ be the number of singular directions at for all roots and label them as 
\[
  0 \leq d_1 < d_2 < \cdots < d_s < 2\pi.
\]
It is known that for each singular direction $d_i$,
the sum of the root spaces $\frkg_\alpha \subset \frkg$ 
for all roots $\alpha$ supporting $d_i$
is a nilpotent Lie subalgebra of $\frkg$. 
The subgroup $\Sto_i(Q) \subset G$ obtained by exponentiating this Lie subalgebra 
is called the \emph{Stokes group} at $d_i$.
Let $H \equiv Z_G(Q) = Z_G(Q_1,Q_2, \dots ,Q_r)$ be the common centralizer 
of the coefficients of $Q$ in $G$. 
Observe that the Stokes groups $\Sto_i(Q)$ are normalized by $H$.

\begin{theorem}[{\cite[Theorem~7.6]{Boa14}}]
  The $G \times H$-variety 
  \[
    \calA(Q) \coloneqq G \times H \times \prod_{i=1}^s \Sto_i(Q)
  \]
  is a quasi-Hamiltonian $G \times H$-space with moment map
  \[
    \mu \colon \calA(Q) \to G \times H, \quad (C,h,S_1, \dots ,S_s) 
    \mapsto (C^{-1} h S_s \cdots S_2 S_1 C, h^{-1})
  \]
  and quasi-Hamiltonian two-form $\omega$ defined by
  \begin{align}
    2\omega &= (dC\,C^{-1},\Ad_b(dC\,C^{-1})) + (dC\,C^{-1},db\,b^{-1})   \\ 
    &\quad + (dC_s\,C_s^{-1},h^{-1}dh) - \sum_{i=1}^s (C_i^{-1}dC_i,C_{i-1}^{-1}dC_{i-1}),
  \end{align}
  where $C_i = S_i \cdots S_2 S_1 C$ (so that $C_0=C$), $b=hS_s \cdots S_2 S_1$.  
\end{theorem}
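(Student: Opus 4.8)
The plan is to deduce the statement from \prpref{prp:multi} by exhibiting $\calA(Q)$ as a multi-fission space attached to a parabolic filtration read off from $Q$. First I would record the combinatorics of the singular directions. For each root $\alpha$ let $o_\alpha$ be the degree of $q_\alpha$, i.e.\ the largest $j$ with $\langle\alpha,Q_j\rangle\neq0$ (and $o_\alpha=0$ when $\alpha$ is a root of $H=Z_G(Q)$); the leading coefficient $\langle\alpha,Q_{o_\alpha}\rangle$ then determines exactly $o_\alpha$ of the singular directions supported by $\alpha$, those supported by $-\alpha$ being interleaved among them. Fixing a generic covector to orient the roots, I would set $H_j=Z_G(Q_j,Q_{j+1},\dots,Q_r)$ and let $P_j\supseteq T$ be the parabolic with Levi $H_j$, so that its unipotent radical $U_j^+$ is spanned by the positive root groups $\frkg_\alpha$ with $o_\alpha\geq j$. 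Since imposing more commutation conditions shrinks the centralizer, this yields the increasing chain $P_1\subseteq\cdots\subseteq P_r$ with Levi chain $H=H_1\subseteq\cdots\subseteq H_r$ and $U_1^\pm\supseteq\cdots\supseteq U_r^\pm$ demanded by the multi-fission construction, and by construction the root group of a root of order exactly $j$ lies in $V_j^\pm=U_j^\pm\cap H_{j+1}$ (with the convention $H_{r+1}=G$) but in no deeper layer.

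The heart of the argument is to identify $\calA(Q)$ with the multi-fission space $\calA=G\times H_1\times\prod_{j=1}^r(U_j^+\times U_j^-)$ for this filtration. The numerics already match: a root of order $j$ contributes its root group to exactly $j$ of the Stokes groups and to exactly $j$ of the layers $U_1^\pm,\dots,U_j^\pm$, so $\dim\prod_{i=1}^s\Sto_i(Q)=\sum_{j}\dim(U_j^+\times U_j^-)$. The substantive point is that the \emph{cyclic} product $S_s\cdots S_2S_1$ around the circle must be reorganized into the \emph{layered} product $u_1u_2\cdots u_{2r}$ grouped by level and sign. This is precisely the gluing decomposition underlying \prpref{prp:multi}: the Stokes data coming from the order-$j$ roots assemble into the fission space $\calA(j)=\FS{H_{j+1}}{H_j}^j$, whose $V_j^\pm$ directly span $U_j^\pm$, and gluing $\calA(1)\glue{H_2}\cdots\glue{H_r}\calA(r)$ recovers both $\calA(Q)$ and $\calA$. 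Carrying this out gives a $G\times H$-equivariant isomorphism under which $S_s\cdots S_1=u_1\cdots u_{2r}$ and $b=hS_s\cdots S_1$ agree, so the two moment maps coincide, and comparing the displayed two-form with \eqref{eq:form} shows that $\omega$ is the pullback of the multi-fission two-form. The quasi-Hamiltonian structure, and the axioms (QH1)--(QH3) in particular, are then inherited from \prpref{prp:multi}.

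The step I expect to be the main obstacle is exactly this reorganization: one must match the cyclic ordering of the $s$ singular directions to the layered ordering $U_1^+,U_1^-,\dots,U_r^+,U_r^-$, and keep the sign and orientation conventions (which arc of the circle yields $U_j^+$ versus $U_j^-$, and how the orientation interacts with the factor $\Ad_b$) consistent so that the intermediate products $C_i=S_i\cdots S_1C$ telescope correctly into the $C_j$ of \eqref{eq:form}. The directly-spanning property of the $V_j^\pm$ supplies the needed unique factorizations, but verifying that the induced bijection is compatible with the partial products, and hence a morphism of quasi-Hamiltonian spaces rather than a mere set-theoretic matching, is where the real care is required. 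An alternative, more self-contained route would bypass the combinatorics and verify (QH1)--(QH3) directly by building $\calA(Q)$ through iterated fusion of elementary fission spaces $\FS{G}{H}^1$; this trades the indexing bookkeeping for a longer chain of explicit two-form computations and seems less efficient.
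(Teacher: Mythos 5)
First, on the comparison you asked for: the paper does not prove this statement at all --- it is imported verbatim from \cite[Theorem~7.6]{Boa14}, and the identification of $\calA(Q)$ with a multi-fission space on which your plan hinges appears only \emph{afterwards}, as \prpref{prp:triangular}, whose proof in turn cites \cite[Proposition~7.12]{Boa14}. So your proposal is not following the paper; it is reconstructing, in outline, Boalch's original argument. The architecture is the right one: your count that a root $\alpha$ with $o_\alpha = j$ supports exactly $j$ singular directions is correct, the parabolic chain $P_1 \subset \cdots \subset P_r$ cut out by a generic covector is the correct filtration, and the dimension match $\sum_i \dim \Sto_i(Q) = \sum_j \dim(U_j^+ \times U_j^-)$ is right.

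Nevertheless, as a proof the proposal has a genuine gap, and it sits exactly where you flagged it. The reorganization of the cyclic product $S_s \cdots S_2 S_1$ into the layered product $u_1 u_2 \cdots u_{2r}$ does \emph{not} follow from \prpref{prp:multi}: that proposition decomposes the multi-fission space $\calA$ into glued fission spaces $\calA(j) = \FS{H_{j+1}}{H_j}^j$, and says nothing about Stokes groups. The missing bridge --- that the Stokes groups attached to the level-$j$ singular directions lying in each half-period directly span $V_j^\pm$, and that sorting the interleaved cyclic product by levels (using unique factorization in directly spanned unipotent groups) yields a well-defined $G \times H$-equivariant isomorphism --- is precisely the content of \cite[Proposition~7.12]{Boa14}, i.e.\ essentially the theorem you are trying to prove, so invoking \prpref{prp:triangular} here would be circular within this paper's logic. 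Note moreover that singular directions of different levels interleave around the circle, so the passage from the $S_i$ to the $u_j$ is \emph{not} a regrouping of contiguous factors: each factor is rewritten, the partial products $C_i = S_i \cdots S_1 C$ do not simply telescope into the $C_j = u_{j+1} \cdots u_{2r} C$ of \eqref{eq:form}, and consequently the two-form identity cannot be obtained by ``comparing the displayed formulas'' --- it requires either Boalch's inductive fission computation or an explicit term-by-term transport of the two-form through the sorting isomorphism. Your alternative route (iterated fusion of spaces $\FS{G}{H}^1$) faces the same issue, since for $r > 1$ the Stokes product still has to be resorted by levels before any fusion decomposition applies. Until that step is carried out, or honestly cited from \cite{Boa14} as the paper does, what you have is a correct plan with the central lemma left unproved.
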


We can describe the space $\calA(Q)$ more concretely. 
Define an increasing sequence of reductive subgroups of $G$ by
\[
  H = H_1 \subset H_2 \subset \cdots \subset H_r \subset H_{r+1} = G, \quad 
  H_j = Z_G(Q_j,Q_{j+1}, \dots ,Q_r).
\]

\begin{proposition}\label{prp:triangular}
  There exist an increasing sequence of parabolic subgroups 
  \[
    P_1 \subset P_2 \subset \cdots \subset P_r
  \]  
  of $G$ wth each $P_i$ containing $H_i$ as a Levi subgroup 
  such that $\calA(Q)$ is isomorphic to the associated multi-fission space 
  as a quasi-Hamiltonian $G \times H$-space.
\end{proposition}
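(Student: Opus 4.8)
The plan is to produce the parabolics explicitly out of the coefficients of $Q$, and then identify the resulting multi-fission space with $\calA(Q)$ by matching it, level by level, against the gluing decomposition already established in \prpref{prp:multi}. For a root $\alpha$ let $\ell(\alpha)$ be the largest index $j$ with $\langle\alpha,Q_j\rangle\neq 0$, so that $\frkg_\alpha\subset\Lie H_j$ exactly when $\ell(\alpha)<j$; equivalently, $\ell(\alpha)\geq j$ is the condition for $\frkg_\alpha$ to lie in the nilpotent radical direction of the parabolic $P_j$ we are seeking. I would fix a direction $d_0\in[0,2\pi)$ that is not singular for any root and is generic in the sense described below, and for each $k$ introduce the real linear functional $\lambda_k(\alpha)=\mathrm{Re}\bigl(e^{-\sqrt{-1}\,k d_0}\langle\alpha,Q_k\rangle\bigr)$ on the real span of the roots. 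Declaring $\alpha$ positive when $\exp(q_\alpha)$ decays along $d_0$, i.e.\ when $\lambda_{\ell(\alpha)}(\alpha)<0$, and comparing roots lexicographically through $(\lambda_r,\lambda_{r-1},\dots,\lambda_1)$ defines a filtration of the root system, hence a flag of parabolics. For generic $d_0$ the common zero locus of $\lambda_r,\dots,\lambda_j$ is exactly the root system $\{\alpha:\langle\alpha,Q_k\rangle=0\ (k\geq j)\}$ of $H_j$, so the parabolic $P_j$ attached to $(\lambda_r,\dots,\lambda_j)$ has Levi $H_j$; since increasing $j$ only deletes functionals from the top of the list, the $P_j$ are automatically nested, giving $P_1\subset\cdots\subset P_r$ with $U_1^\pm\supset\cdots\supset U_r^\pm$ as required.

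Next I would match the two spaces. Because $\frkg_\alpha$ lies in $U_j^\pm$ precisely for $1\leq j\leq\ell(\alpha)$, each root of level $k$ contributes to exactly $k$ of the factors $U_j^\pm$; on the other side, the leading term $\langle\alpha,Q_k\rangle z^{-k}$ produces exactly $k$ singular directions supported by $\alpha$, so $\frkg_\alpha$ enters exactly $k$ of the Stokes groups $\Sto_i(Q)$. Hence the multi-fission space $\calA$ and $\calA(Q)$ carry the same $G\times H$-action and have equal dimension, and the real task is to promote a mere identification of varieties to a quasi-Hamiltonian isomorphism. For this I would appeal to the decomposition of \prpref{prp:multi}, namely $\calA\simeq\calA(1)\glue{H_2}\cdots\glue{H_r}\calA(r)$ with $\calA(k)=\FS{H_{k+1}}{H_k}^k$, and exhibit the parallel decomposition of $\calA(Q)$: the subgroups $H_{k+1}=Z_G(Q_{k+1},\dots,Q_r)$ are exactly the centralizers built into $\calA(Q)$, and over $H_{k+1}$ the pure irregular type $Q_k z^{-k}$ has centralizer $Z_{H_{k+1}}(Q_k)=H_k$ and Stokes data which, as in the single-slope case, assemble into the fission space $\FS{H_{k+1}}{H_k}^k$. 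Matching the two gluings level by level would then yield the desired quasi-Hamiltonian isomorphism $\calA(Q)\cong\calA$.

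The hard part will be this last identification: the product of Stokes groups $\Sto_s\cdots\Sto_1$, ordered by the singular directions $d_1<\cdots<d_s$, must be reorganized into $\prod_j(U_j^+\times U_j^-)$, which is ordered by level, and the two quasi-Hamiltonian two-forms and moment maps must be seen to agree under this reorganization. The two orderings genuinely differ, and singular directions arising from roots of distinct levels may interleave or even coincide, so the passage is not a simple permutation of factors; in particular one must show the moment-map products $h\,S_s\cdots S_1$ and $h\,u_1\cdots u_{2r}$ correspond. I would carry this out by repeated use of Boalch's directly-spanning gluing result \cite[Theorem~6.4]{Boa14}, as in the proof of \prpref{prp:multi}: grouping the root groups of a fixed level $k$ into $V_k^\pm$ and using that $V_k^\pm,\dots,V_r^\pm$ directly span $U_k^\pm$ reduces the reorganization to successive gluings of the elementary pieces $\FS{H_{k+1}}{H_k}^k$, whose moment maps and two-forms are restrictions of those of $\calA(Q)$. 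Verifying that these restricted structures coincide with Boalch's gluing two-form, and not merely that the underlying varieties match, is the technical crux of the proof.
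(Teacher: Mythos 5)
Your construction of the flag of parabolics is sound and is in fact more explicit than what the paper does: the lexicographic comparison through $(\lambda_r,\dots,\lambda_1)$ does produce closed sets of roots (lex-nonpositivity is additive), for generic $d_0$ the common zero locus of $\lambda_r,\dots,\lambda_j$ is exactly the root system of $H_j$, and the nesting $P_1\subset\cdots\subset P_r$ follows as you say. Your dimension count (a root of level $k$ supports exactly $k$ singular directions and lies in exactly $k$ of the $U_j^\pm$) is also correct. The paper does not construct the parabolics directly at all: its proof consists of citing \cite[Proposition~7.12]{Boa14}, which asserts precisely that there exist parabolic subgroups $P'_j\subset H_{j+1}$ with Levi $H_j$ such that $\calA(Q)\simeq \calA(1)\glue{H_2}\cdots\glue{H_r}\calA(r)$ with $\calA(j)=\FS{H_{j+1}}{H_j}^j$ as quasi-Hamiltonian spaces, and then running the proof of \prpref{prp:multi} backwards to reassemble this gluing into a multi-fission space.

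The genuine gap is exactly the step you yourself flag as ``the technical crux'' and then leave as a plan: showing that the direction-ordered product of Stokes groups $S_s\cdots S_1$, with the two-form of \cite[Theorem~7.6]{Boa14}, reorganizes into the level-ordered gluing of the pieces $\FS{H_{k+1}}{H_k}^k$ compatibly with the two-forms and moment maps. Repeated application of \cite[Theorem~6.4]{Boa14} cannot close this on its own: that theorem tells you that a glued product of fission spaces over directly spanning unipotent groups is again a fission-type space with the expected structure, but it presupposes that $\calA(Q)$ has already been identified, two-form included, with such a glued product. Establishing that identification requires the nontrivial Stokes-theoretic input (grouping the Stokes groups within each level over $H_{k+1}$, handling interleaved and coincident singular directions of different levels, and verifying the two-form identity under the resulting non-permutational rewriting of $h\,S_s\cdots S_1$ as $h\,u_1\cdots u_{2r}$) --- and this is precisely the content of Boalch's proof of \cite[Proposition~7.12]{Boa14}, which you neither cite nor reprove. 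So as written your argument reduces the proposition to an unproven claim that happens to be a known theorem; it becomes complete (and then essentially coincides with the paper's proof, with your generic-ray construction of the $P_j$ as a bonus) once that citation is supplied.
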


\begin{proof}
  By \cite[Proposition~7.12]{Boa14}, 
  there exists a parabolic subgroup $P'_j$ of $H_{j+1}$ with Levi subgroup $H_j$ 
  for each $j=1,2, \dots ,r$ such that if we denote by $V_j^+$ 
  the unipotent radical of $P'_j$ and by $V_j^-$ that of the opposite parabolic subgroup, 
  then 
  \[
    \calA(Q) \simeq \calA(1) \glue{H_2} \calA(2) \glue{H_3} \cdots \glue{H_r} \calA(r), 
  \]
  where $\calA(j) = H_{j+1} \times H_j \times (V_j^+ \times V_j^-)^j = \FS{H_{j+1}}{H_j}^j$.
  By the proof of \prpref{prp:multi}, the right hand side is isomorphic to the multi-fission space.
\end{proof}

Now let $\Sigma$ be a compact Riemann surface of genus $g$ and take a finite set 
$\bfa = \{a_1, \ldots, a_m\} \subset \Sigma$ of marked points.
Take a local coordinate $z_i$ centered at $a_i$ and  
an irregular type ${}^i Q = \sum_{j=1}^{r_i} {}^i Q_j z_i^{-j}$ 
(of pole order $r_i$) at $a_i$ for each $i=1,2,\ldots,m$.
The tuple $\bSig \coloneqq (\Sigma,\bfa;{}^1 Q, \dots ,{}^m Q)$ 
is called an (untwisted) \emph{irregular curve} with structure group $G$.

Put 
\[
  \bfH = Z_G({}^1 Q) \times \cdots \times Z_G({}^m Q),
\]
and 
\[
  \calR(\bSig) \coloneqq \left( \underbrace{\bbD(G) \circledast_G \cdots \circledast_G \bbD(G)}_{g} 
  \circledast_G \calA({}^1 Q) \circledast_G \cdots \circledast_G \calA({}^m Q) \right) \spq G,
\]
which is a quasi-Hamiltonian $\bfH$-space.

\begin{definition}
  The affine quotient $\MB(\bSig) \coloneqq \calR(\bSig)/\bfH$ 
  is called the \emph{wild character variety} $\MB(\bSig)$ 
  associated to the irregular curve $\bSig$. 
\end{definition}

It is known that 
the quasi-Hamiltonian structure on $\calR(\bSig)$
induces a Poisson structure on $\MB(\bSig)$; 
see \cite[Proposition~2.8]{Boa14}.

We are interested in some subvarieties of $\MB(\bSig)$. 
Let $\mu_{\bfH} \colon \calR(\bSig) \to \bfH$ be the moment map
\[
  \left[ \left( (A_l,B_l)_{l=1}^g, (C_i,h_i,{}^i S_1, \dots ,{}^i S_{s_i})_{i=1}^m \right) \right] 
  \mapsto (h_1^{-1}, \dots ,h_m^{-1}),
\]
where $s_i$ is the number of singular directions at $a_i$. 
This map takes values in the following subgroup of $\bfH$.

\begin{proposition}\label{prp:value}
  Put $\ov{Z}(G) = G/[G,G] = Z(G)^0/(Z(G)^0 \cap [G,G])$ 
  and let $\prj_G \colon G \to \ov{Z}(G)$ be the canonical projection. 
  Then the image of $\mu_{\bfH}$ is contained in 
  the kernel of the homomorphism 
  \[
    \prj_{\bfH} \colon \bfH \to \ov{Z}(G), \quad (h_i) \mapsto \prod_{i=1}^m \prj(h_i).
  \]
\end{proposition}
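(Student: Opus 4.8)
The plan is to apply the abelianization homomorphism $\prj_G \colon G \to \ov{Z}(G)$ to the group-valued equation that cuts out $\calR(\bSig)$, and to exploit that $\ov{Z}(G) = G/[G,G]$ is abelian, so that commutators and unipotent elements are killed.

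First I would make the reduction condition explicit. By construction $\calR(\bSig)$ is the reduction along $\{1\} \subset G$ of the diagonal $G$-factor of the fusion product, so any representative $((A_l,B_l)_{l=1}^g, (C_i,h_i,{}^iS_1,\dots,{}^iS_{s_i})_{i=1}^m)$ lies in the zero fibre of the total $G$-moment map. Since fusion multiplies moment maps, this total map is the ordered product of the $G$-components of the moment maps of the individual pieces; explicitly one obtains an identity in $G$ of the form
\[
  \prod_{l=1}^g [A_l,B_l] \cdot \prod_{i=1}^m C_i^{-1} h_i\, {}^iS_{s_i}\cdots {}^iS_1\, C_i = 1,
\]
where each genus factor contributes a commutator coming from the double $\bbD(G)$ (Example~\ref{eg:double}) and each marked point contributes the $G$-component of the moment map of $\calA({}^iQ)$. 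I will not need the precise ordering, only that the genus factors lie in $[G,G]$.

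Next I would apply $\prj_G$. Three facts do the work: (i) $\prj_G$ is a homomorphism onto the abelian group $\ov{Z}(G)$, so it annihilates every commutator, giving $\prj_G([A_l,B_l]) = 1$; (ii) $\prj_G(C^{-1}xC) = \prj_G(x)$ for all $x,C \in G$, since $C^{-1}xCx^{-1}$ is a commutator; and (iii) each Stokes element ${}^iS_j$ lies in a Stokes group, which is the exponential of a nilpotent subalgebra, hence is unipotent and therefore contained in $[G,G] = \Ker \prj_G$. Combining (ii) and (iii) yields $\prj_G(C_i^{-1} h_i\, {}^iS_{s_i}\cdots {}^iS_1\, C_i) = \prj_G(h_i)$. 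Applying $\prj_G$ to the displayed identity thus collapses it to $\prod_{i=1}^m \prj_G(h_i) = 1$ in $\ov{Z}(G)$.

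Finally, since $\mu_{\bfH}$ sends the class of this representative to $(h_1^{-1},\dots,h_m^{-1})$, we obtain $\prj_{\bfH}(\mu_{\bfH}(\cdot)) = \prod_i \prj_G(h_i^{-1}) = \bigl(\prod_i \prj_G(h_i)\bigr)^{-1} = 1$, so the image of $\mu_{\bfH}$ lies in $\Ker \prj_{\bfH}$. The only genuine care needed is in the first step: pinning down that the reduction really imposes that the product of the component $G$-moment maps equals the identity, and that the Stokes factors enter through the fission moment maps. Given the fusion formula for moment maps this is bookkeeping, and none of the signs or orderings matter once $\prj_G$ is applied; I expect this robustness to be exactly why the statement holds so cleanly.
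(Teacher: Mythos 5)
Your proposal is correct and is essentially the paper's own argument: the paper likewise writes down the moment map relation $\prod_{l=1}^g [A_l,B_l] \prod_{i=1}^m C_i^{-1} h_i ({}^i S_{s_i} \cdots {}^i S_1) C_i = 1$ and applies $\prj_G$, using that $\ov{Z}(G)$ is abelian (killing commutators and conjugations) and that unipotent Stokes elements lie in $\Ker \prj_G = [G,G]$. Your extra bookkeeping about fusion multiplying moment maps just makes explicit what the paper attributes to the construction of $\calR(\bSig)$ (citing the proof of Corollary~9.7 of Boalch's paper), so no gap remains.
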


\begin{proof}
  This is implicitly shown in the proof of \cite[Corollary~9.7]{Boa14}. 
  Any point $p=[(A_l,B_l), (C_i,h_i,({}^i S_j))] \in \calR(\bSig)$  
  satisfies the moment map relation   
  \[
    \prod_{l=1}^g [A_l,B_l] \prod_{i=1}^m C_i^{-1} h_i ({}^i S_{s_i} \cdots {}^i S_1) C_i = 1.
  \] 
  Since $\ov{Z}(G)$ is abelian and any unipotent element of $G$ lies in $\Ker \prj_G$, 
  applying $\prj_G$ to the both sides of the above relation yields
  $\prod_{i=1}^m \prj_G(h_i) = 1$.
\end{proof}

Put $\bfH' = \Ker \prj_{\bfH} \subset \bfH$.
Observe that the center $Z(G)$ embedded diagonally into $\bfH$ 
acts trivially on $\calR(\bSig)$ and the projection   
$\bfH' \to \bfH/Z(G)$ is an isogeny (i.e., is surjective with finite kernel).
Also, it is easy to see that the Lie algebra $\Lie \bfH'$ is perpendicular to 
$\Lie Z(G) \subset \Lie \bfH$ with respect to 
the invariant bilinear form on $\Lie \bfH$.  
Hence the restriction of the bilinear form to $\Lie \bfH'$ is non-degenerate. 

The quasi-Hamiltonian two-form on $\calR(\bSig)$ and 
the moment map $\mu_{\bfH} \colon \calR(\bSig) \to \bfH' \subset \bfH$ 
still satisfy axioms (QH1), (QH2), (QH3) 
for the action of the subgroup $\bfH' \subset \bfH$ 
equipped with the above bilinear form. 
Hence they make $\calR(\bSig)$ into a quasi-Hamiltonian $\bfH'$-space.

Let $\calC \subset \bfH'$ be a conjugacy class (note that 
it is also a conjugacy class of the group $\bfH = \bfH' Z(G)$). 
The affine quotient 
\[
  \MB(\bSig,\ov{\calC}) \coloneqq 
  \mu_{\bfH}^{-1}\left( \ov{\calC}^{-1} \right)/\bfH = \calR(\bSig) \spqa{\ov{\calC}^{-1}} \bfH',
\]
where $\ov{\calC}^{-1} = \{\, h^{-1} \mid h \in \ov{\calC}\,\}$,
is a closed Poisson subvariety of $\MB(\bSig)$. 
We say that a point $p \in \calR(\bSig)$ is \emph{stable} 
if its $\bfH$-orbit is closed and of dimension equal to $\dim \bfH / \dim Z(G) = \dim \bfH'$.
By geometric invariant theory, the stable locus 
(the set of stable points)  
$\mu_{\bfH}^{-1}\left( \calC^{-1} \right)^\st$ 
of $\mu_{\bfH}^{-1}\left( \calC^{-1} \right)$ 
has a geometric quotient
\[
  \MB^\st(\bSig,\calC) \coloneqq \mu_{\bfH}^{-1}\left( \calC^{-1} \right)^\st/\bfH 
  = \calR(\bSig)^\st \spqa{\calC^{-1}} \bfH',
\]
which is an open subset of $\MB(\bSig,\ov{\calC})$ 
and has the structure of a symplectic orbifold.
We also call $\MB(\bSig,\ov{\calC})$, $\MB^\st(\bSig,\calC)$  
wild character varieties.

By swapping the order of reductions, 
we can also describe $\MB(\bSig,\ov{\calC})$ and $\MB^\st(\bSig,\calC)$ as follows.
Note that $\calC$ has the form $\calC = \prod_{i=1}^m \calC_i$, where 
$\calC_i \subset Z_G({}^i Q)$ is a conjugacy class of $Z_G({}^i Q)$.
For $i=1,2, \dots ,m$, perform the reduction 
of $\calA({}^i Q)$ by $Z_G({}^i Q)$ along the inverse conjugacy class $\calC_i^{-1}$: 
\begin{align}
  \calA_{\calC_i}({}^i Q) 
  &\coloneqq \calA({}^i Q) \spqa{\calC_i^{-1}} Z_G({}^i Q) \\
  &= \set{(C_i,h_i,({}^i S_j)) \in \calA({}^i Q)}{h_i \in \calC_i}/Z_G({}^i Q).
\end{align}
Then the moment map for the fusion product 
\[
  \tMB(\bSig,\calC) \coloneqq \bbD(G)^{\circledast g} \circledast_G \calA_{\calC_1}({}^1 Q) \circledast_G \cdots \circledast_G \calA_{\calC_m}({}^m Q)
\]
takes values in the subgroup $G' \coloneqq \Ker \prj_G$, 
and the space $\MB^\st(\bSig,\calC)$ is the reduction of 
the stable locus of $\tMB(\bSig,\calC)$ by $G'$: 
\[
  \MB^\st(\bSig,\calC) = \tMB(\bSig,\calC)^\st \spq G',
\]
where a point $p \in \tMB(\bSig,\calC)$ is stable if its $G$-orbit is closed 
and of dimension equal to $\dim G / \dim Z(G) = \dim G'$ (see \rmkref{rmk:stable1} below). 
To obtain a similar description of $\MB(\bSig,\ov{\calC})$, put 
\begin{align}
  \calA_{\ov{\calC}_i}({}^i Q) 
  &\coloneqq \calA({}^i Q) \spqa{\ov{\calC_i}^{-1}} Z_G({}^i Q) \\
  &= \set{(C_i,h_i,({}^i S_j)) \in \calA({}^i Q)}{h_i \in \ov{\calC}_i}/Z_G({}^i Q).
\end{align}
This is a Hamiltonian quasi-Poisson $G$-variety 
The variety $\MB(\bSig,\ov{\calC})$ is described as   
\[
  \MB(\bSig,\ov{\calC}) = \tMB(\bSig,\ov{\calC}) \spq G',
\] 
where 
\[
  \tMB(\bSig,\ov{\calC}) \coloneqq \bbD(G)^{\circledast g} \circledast_G \calA_{\ov{\calC}_1}({}^1 Q) \circledast_G \cdots \circledast_G \calA_{\ov{\calC}_m}({}^m Q).
\]

\begin{remark}\label{rmk:stable1}
  Suppose that $p \in \calR(\bSig)$ and $q \in \tMB(\bSig,\calC)$ are 
  represented by the same point in $\bbD(G)^{\circledast g} \circledast_G \calA({}^1 Q) \circledast_G \cdots \circledast_G \calA({}^m Q)$.
  Then $p$ is stable (for the $\bfH$-action) if and only if 
  $q$ is stable (for the $G$-action).
  This follows from \cite[Proof of Theorem~19, Lemma~21]{BY15}. 
\end{remark}

\begin{remark}\label{rmk:stable2}
  For $i=1,2, \dots ,m$, let $Z_i$ be the identity component of the center of $Z_G({}^i Q)$.
  Then \cite[Theorem~9.3]{Boa14} together with the above remark implies that 
  a point $((A_l,B_l), ([C_i,h_i,({}^i S_j)])) \in \tMB(\bSig,\calC)$ 
  is stable if and only if there exists no proper parabolic subgroup of $G$ 
  containing all $A_l, B_l$, $C_i^{-1}h_i C_i$, 
  $C_i^{-1} {}^i S_j C_i$, $C_i^{-1}Z_i C_i$.
\end{remark}

\begin{remark}
  Our definition of wild character varieties 
  depends on the choice of local coordinates around marked points 
  and generators of the fundamental groupoid of some auxiliary surface. 
  The original definition does not depend on them; see \cite{Boa14,BY15}.
\end{remark}

\section{Triangular decomposition of conjugacy classes}\label{sec:triangular}

In this section we introduce 
a sort of ``triangular decomposition'' of conjugacy classes in $G$, 
which gives affine charts of conjugacy classes and will be used in the subsequent sections.

Let $P$ be a parabolic subgroup of $G$ 
and $H$ be a Levi subgroup of $P$.
Let $U^+$ be the unipotent radical of $P$ with Lie algebra $\frku$ 
and $U^-$ be that of the opposite parabolic subgroup.
  
\begin{lemma}\label{lmm:conj-unip}
  Take $h \in H$ so that the identity component $Z_G(h)^0$ 
  of the centralizer in $G$ is contained in $H$.
  Then the map $U^+ \to U^+$, $u \mapsto (h^{-1}u^{-1}h)u$ is an isomorphism of varieties.
  In particular, for any $u' \in U^+$, 
  there exists a unique $u \in U^+$ such that $hu' = u^{-1} h u$. 
\end{lemma}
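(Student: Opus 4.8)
The plan is to linearize the self-map $\phi\colon U^+ \to U^+$, $\phi(u) = (h^{-1}u^{-1}h)u$, along the natural filtration of $U^+$ coming from $P$, and to extract invertibility from one linear-algebra fact about $\Ad_h$. First I would fix a cocharacter $\lambda$ of $T$ whose associated parabolic is $P$, yielding the weight decomposition
\[
  \frkg = \bigoplus_{n \in \Z} \frkg_n, \quad \frkh = \frkg_0, \quad \frku = \bigoplus_{n \ge 1}\frkg_n .
\]
Since $h \in H = Z_G(\lambda)$, the operator $\Ad_h$ preserves each $\frkg_n$. The Lie algebra of $Z_G(h)$ is the fixed space $\Ker(\Ad_h - \id)$, so the hypothesis $Z_G(h)^0 \subseteq H$ forces $\Ker(\Ad_h - \id) \subseteq \frkg_0$. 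Hence for every $n \ge 1$ the restriction $\Ad_h|_{\frkg_n}$ has no nonzero fixed vector, and therefore $\id - \Ad_h^{-1}$ is a linear automorphism of $\frku$. This single fact drives everything.

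Next I would introduce the descending filtration by the closed normal subgroups $U_{\ge n} \subseteq U^+$ with $\Lie U_{\ge n} = \bigoplus_{m \ge n}\frkg_m$, so that $U_{\ge n}/U_{\ge n+1} \cong \frkg_n$ via $\exp$. Because $h$ normalizes each $U_{\ge n}$, one checks directly that $\phi$ carries $U_{\ge n}$ into itself and (using normality of $U_{\ge n}$) descends to the subquotients. A short Baker--Campbell--Hausdorff computation, in which the correction term lands in $[\frkg_n,\frkg_n] \subseteq \frkg_{2n}$ and so dies modulo $U_{\ge n+1}$, shows that the induced map on $U_{\ge n}/U_{\ge n+1} \cong \frkg_n$ is exactly the linear automorphism $\id - \Ad_h^{-1}$ from the previous step. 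Thus $\phi$ is a filtration-preserving morphism fixing $1$ whose associated graded is an isomorphism.

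From here I would conclude in two steps. Computing the differential of $\phi$ at an arbitrary point $u_0$ gives an operator on $\frku$ of the form (unipotent)$\,-\,\Ad_h^{-1}$; since the unipotent part induces the identity on the associated graded, this differential has associated graded $\id - \Ad_h^{-1}$ and is therefore invertible, so $\phi$ is étale everywhere. For injectivity, $\phi(u) = \phi(u')$ rearranges to $u'u^{-1} \in Z_G(h) \cap U^+$; this intersection is a closed subgroup of the unipotent group $U^+$, hence connected in characteristic zero, with Lie algebra $\Ker(\Ad_h - \id) \cap \frku = 0$, so it is trivial and $u = u'$. An injective endomorphism of the affine variety $U^+ \cong \mathbb{A}^d$ is bijective, and a bijective étale morphism onto a smooth (hence normal) variety is an isomorphism; this gives the lemma. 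The final assertion is just the bijectivity of $\phi$, since solving $\phi(u) = u'$ produces the unique $u$ with $hu' = u^{-1}hu$.

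The main obstacle is this last passage: upgrading from an isomorphism on the associated graded to a genuine isomorphism of varieties, i.e.\ guaranteeing that the inverse is \emph{regular}, not merely that $\phi$ is bijective. The grading computation is precisely what makes this work, and if one prefers to avoid the étale-plus-bijective principle, the same data let one build $\phi^{-1}$ by hand: solve for the preimage one graded layer at a time, inverting $\id - \Ad_h^{-1}$ on $\frkg_n$ and substituting the already-determined lower layers, which is manifestly algebraic.
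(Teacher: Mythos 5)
Your proposal is correct, and its engine is the same as the paper's: the hypothesis $Z_G(h)^0 \subseteq H$ makes $\Ad_h^{-1}-\id$ invertible on $\frku$, and a filtration of $U^+$ on whose graded pieces the map $\phi(u)=(h^{-1}u^{-1}h)u$ acts (modulo Baker--Campbell--Hausdorff corrections) by exactly this linear automorphism. Where you diverge is the superstructure. The paper filters by the lower central series of $\frku$ (no cocharacter needed) and uses the BCH expansion to solve $Y(X)=Y$ for $X$ inductively, one layer at a time; this hands over the inverse of $\phi$ directly as an explicit morphism, so the lemma closes using nothing beyond linear algebra and BCH. You instead grade by a cocharacter $\lambda$ with $P=P(\lambda)$ (which does make the graded computation cleaner, and your identification $\Lie Z_G(h)=\Ker(\Ad_h-\id)$ uses smoothness of centralizers, valid in characteristic zero), prove \'etaleness pointwise from the differential (your computation that $d\phi_{u_0}$ has associated graded $\id-\Ad_h^{-1}$ checks out), get injectivity from the triviality of $Z_G(h)\cap U^+$ (correctly using that closed subgroups of unipotent groups in characteristic zero are connected, with Lie algebra $\Ker(\Ad_h-\id)\cap\frku=0$), and then invoke Ax--Grothendieck plus the principle that a bijective \'etale morphism onto a normal variety is an isomorphism. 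Both routes are valid: the paper's is more elementary and self-contained, and produces the regular inverse explicitly rather than deducing its existence; yours modularizes the problem into standard general facts at the cost of heavier machinery, and in exchange the injectivity argument via $Z_G(h)\cap U^+$ is a nice conceptual shortcut. Your closing alternative --- building $\phi^{-1}$ by inverting $\id-\Ad_h^{-1}$ on each graded layer and substituting the lower layers --- is in substance precisely the paper's proof, so you have in effect recovered it as your fallback.
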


\begin{proof}
  Let $\frku^i$, $i=0,1,\dots $ be the lower central series of 
  the Lie algebra $\frku \coloneqq \Lie U^+$: 
  \[
    \frku^0 = \frku, \quad \frku^i = [\frku, \frku^{i-1}] \quad (i>0).
  \]
  For $i=1,2, \dots$, take a vector subspace $\frku_i \subset \frku^{i-1}$ 
  complementary to $\frku^i$: 
  \[
    \frku^{i-1} = \frku_i \oplus \frku^i \quad (i>0).
  \] 
  Put $\frku'_i = (\Ad_h^{-1} -\id)(\frku_i)$. Note that $\Ad_h^{-1} -\id \colon \frku \to \frku$ 
  is a linear isomorphism preserving the filtration $\{ \frku^i \}_{i>0}$ 
  by the assumption for $h$.
  Since $\frku$ is nilpotent, we have $\frku = \bigoplus_{i>0} \frku_i = \bigoplus_{i>0} \frku'_i$. 
  For $X \in \frku$, let $Y(X) \in \frku$ be a unique element such that 
  \[
    e^{Y(X)} = (h^{-1}e^{-X}h)e^X = e^{-\Ad_h^{-1}(X)} e^X.
  \]
  By the Baker--Campbell--Hausdorff formula, $Y(X)$ is expressed as $Y(X) = \sum_{i>0} Y_i(X)$, where 
  $Y_1(X) = -\Ad_h^{-1}(X) + X$ and each $Y_i(X)$ is a linear combination of elements of the form 
  \[
    \ad_{Z_1} \ad_{Z_2} \cdots \ad_{Z_{i-1}} (Z_i),  \quad 
    Z_1, \dots ,Z_i \in \{ -\Ad_h^{-1}(X), X \}.
  \] 
  Observe that if we decompose $X$ as $X=\sum X_i$, $X_i \in \frku_i$, then 
  \begin{align}
    Y(X) + \frku^i &= Y_1(X) + Y_2(X) + \cdots + Y_i(X) + \frku^i \\ 
    &=-(\Ad_h^{-1} -\id)(X_i) + \sum_{j=1}^i Y_j(X_1 + X_2 + \cdots + X_{i-1}) + \frku^i.
  \end{align}
  Thus for any $Y=\sum Y_i \in \frku$, $Y_i \in \frku'_i$, 
  the equation $Y(X)=Y$ determines $X_i \in \frku_i$, $i=1,2, \dots $ inductively with 
  $X_1 = -(\Ad_h^{-1} -\id)^{-1}(Y_1)$, and $e^Y \mapsto e^X$ gives an inverse of the map $u \mapsto h^{-1}u^{-1}hu$.
\end{proof}

Take $h_0 \in H$ so that $Z_G(h_0)^0 \subset H$, 
and let $\calC \subset H$ (resp.\ $\frkC \subset G$) be its $H$-conjugacy class 
(resp.\ $G$-conjugacy class).
By the above lemma, we can define the following map: 
\[
  \tau \colon \calC \times U^+ \times U^- \to \frkC, \quad 
  (h,u,v) \mapsto v^{-1}(hu)v.
\]

\begin{proposition}\label{prp:conj-tri}
  The morphism $\tau$ is \'etale,  
  and is an open immersion if $Z_G(h_0) \subset H$. 
  Furthermore, if we denote by $\omega_H, \omega_G$ 
  the quasi-Hamiltonian two-forms on $\calC$, $\frkC$, respectively, then 
  \[
    2\tau^*\omega_G = 2\omega_H 
    + (dv\,v^{-1},(hu)^{-1}d(hu) + d(hu)(hu)^{-1}+ \Ad_{hu}(dv\, v^{-1})).
  \]
\end{proposition}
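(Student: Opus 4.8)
The plan is to reduce everything to the orbit map $\pi_G\colon G\to\frkC$, $C\mapsto C^{-1}h_0 C$, whose pullback of the quasi-Hamiltonian two-form has the closed form $\pi_G^*\omega_G=\tfrac12(dC\,C^{-1},\Ad_{h_0}(dC\,C^{-1}))$ from Example~\ref{eg:G-conj}. The key is a factorization of $\tau$. Writing $h=c^{-1}h_0 c$ with $c\in H$ (well defined modulo $Z_H(h_0)\coloneqq Z_G(h_0)\cap H$) and applying \lmmref{lmm:conj-unip} to the unique $w\in U^+$ with $hu=w^{-1}hw$ (the lemma applies since $Z_G(h)^0=c^{-1}Z_G(h_0)^0c\subset H$), we get $hu=(cw)^{-1}h_0(cw)\in\frkC$ and hence $\tau(h,u,v)=v^{-1}(hu)v=(cwv)^{-1}h_0(cwv)=\pi_G(cwv)$. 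Thus, after the change of variables $(h,u)\mapsto(c\bmod Z_H(h_0),w)$ — an isomorphism because $h\mapsto c$ identifies $\calC$ with $Z_H(h_0)\backslash H$ and, for fixed $h$, $u\mapsto w$ inverts the isomorphism of \lmmref{lmm:conj-unip} — the map $\tau$ becomes $\pi_G$ precomposed with the multiplication $(c,w,v)\mapsto cwv$.

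For the \'etale claim I would argue as follows. The multiplication $H\times U^+\times U^-\to G$, $(c,w,v)\mapsto cwv$ is an open immersion onto the open cell $PU^-$, since it factors as the isomorphism $H\times U^+\xrightarrow{\sim}P$ followed by the big-cell open immersion $P\times U^-\hookrightarrow G$. The orbit map $\pi_G$ is a $Z_G(h_0)$-torsor onto $\frkC$, so its differential on $PU^-$ is surjective with kernel the tangent to the left $Z_G(h_0)$-action. The hypothesis $Z_G(h_0)^0\subset H$ gives $Z_G(h_0)^0=Z_H(h_0)^0$, whence $\dim(\calC\times U^+\times U^-)=\dim\frkC$ and the kernel direction is exactly the $Z_H(h_0)$-direction killed by the change of variables; therefore $d\tau$ is an isomorphism and $\tau$ is \'etale. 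If moreover $Z_G(h_0)\subset H$, then $Z_G(h_0)=Z_H(h_0)$ and $\pi_G|_{PU^-}$ descends to an injective map on $Z_H(h_0)\backslash PU^-\cong\calC\times U^+\times U^-$ (injectivity: $\pi_G(C_1)=\pi_G(C_2)$ forces $C_2C_1^{-1}\in Z_G(h_0)=Z_H(h_0)$), so $\tau$ is an open immersion.

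For the two-form I would pull back $2\pi_G^*\omega_G=(dC\,C^{-1},\Ad_{h_0}(dC\,C^{-1}))$ along $C=cwv$. Setting $m=cw$, $\theta_m=dm\,m^{-1}$ and $\gamma=dv\,v^{-1}\in\frku^-$, we have $dC\,C^{-1}=\theta_m+\Ad_m\gamma$, so $2\tau^*\omega_G$ splits into a pure-$m$ term, two cross terms, and a pure-$\gamma$ term. The pure-$\gamma$ term is $(\Ad_m\gamma,\Ad_{h_0}\Ad_m\gamma)=(\gamma,\Ad_{m^{-1}h_0 m}\gamma)=(dv\,v^{-1},\Ad_{hu}(dv\,v^{-1}))$ by invariance and $m^{-1}h_0 m=hu$. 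Differentiating $hu=m^{-1}h_0 m$ yields $d(hu)(hu)^{-1}=\Ad_{m^{-1}}(\Ad_{h_0}-\id)\theta_m$ and $(hu)^{-1}d(hu)=\Ad_{m^{-1}}(\id-\Ad_{h_0^{-1}})\theta_m$; feeding these into the two cross terms and using invariance collapses them to $(dv\,v^{-1},(hu)^{-1}d(hu)+d(hu)(hu)^{-1})$. Finally, writing $\theta_m=dc\,c^{-1}+\Ad_c(dw\,w^{-1})$ with $dc\,c^{-1}\in\frkh$ and $\Ad_c(dw\,w^{-1})\in\frku^+$ (as $H$ normalizes $U^+$), the pure-$m$ term $(\theta_m,\Ad_{h_0}\theta_m)$ reduces to $(dc\,c^{-1},\Ad_{h_0}(dc\,c^{-1}))=2\omega_H$, because the three remaining pairings vanish: $\frku^+$ is isotropic and orthogonal to $\frkh$, and $\Ad_{h_0}$ preserves both $\frkh$ and $\frku^+$. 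Summing the three contributions gives the asserted formula.

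The main obstacle is the bookkeeping around the factorization rather than the two-form identity. Because $c$ is determined only modulo $Z_H(h_0)$, the intermediate maps live on quotients, and the careful distinction between $Z_G(h_0)$ and $Z_H(h_0)$ — equal in dimension under the standing hypothesis, but equal as groups only under the stronger one — is precisely what separates the \'etale statement from the open-immersion statement. The two-form computation, by contrast, is clean once one records the Maurer--Cartan identities for $hu=m^{-1}h_0 m$ and invokes the isotropy of $\frku^\pm$.
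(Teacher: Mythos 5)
Your proposal is correct and takes essentially the same route as the paper: the paper uses the identical parametrization $C=kwv$ (your $cwv$) built from \lmmref{lmm:conj-unip}, identifies $\tau$ with the open immersion $\iota\colon \calC\times U^+\times U^-\simeq H/Z_H(h_0)\times U^+\times U^-\to G/Z_H(h_0)$ composed with the \'etale quotient $G/Z_H(h_0)\to G/Z_G(h_0)\simeq\frkC$ (your torsor/tangent-space count is just the differential-level version of this), and computes $2\tau^*\omega_G$ by pulling back $(dC\,C^{-1},\Ad_{h_0}(dC\,C^{-1}))$ with the same Maurer--Cartan identities for $hu=p^{-1}h_0p$ and the same isotropy/orthogonality of $\frku^\pm$ against $\frkh$.
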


\begin{proof}
  The open immersion  
  \[
    H \times U^+ \times U^- \to G, \quad (k,u,v) \mapsto kuv
  \]
  induces an open immersion 
  \[
    \iota \colon \calC \times U^+ \times U^- \simeq H/Z_H(h_0) \times U^+ \times U^- \to G/Z_H(h_0).
  \]
  If $Z_H(h_0)=Z_G(h_0)$, i.e., $Z_G(h_0) \subset H$, 
  then $\iota$ may be identified with $\tau$ 
  through the isomorphism given in the previous proposition. 
  In general, $\tau$ is identified with the composite of $\iota$ 
  and the quotient map $G/Z_H(h_0) \to G/Z_G(h_0) \simeq \frkC$, 
  which is an \'etale morphism since $Z_H(h_0)$ contains the identity component $Z_G(h_0)^0$ by the assumption.
  Thus $\tau$ is also \'etale. 
  For $(k,w,v) \in H \times U^+ \times U^-$, 
  define $h \in \calC$, $C \in G$ and $u \in U^+$ by
  \[
    h = k^{-1}h_0 k, \quad C = kwv, \quad w^{-1}hw = hu.
  \]
  so that $C^{-1}h_0 C = v^{-1}w^{-1}h_0 wv = v^{-1}(hu)v$. 
  Also put $p=kw$.
  Then 
  \[
    dC\,C^{-1} = dp\,p^{-1} + \Ad_p(dv\,v^{-1}),
  \]
  and hence the pullback of $2\omega_G$ along the map 
  $\pi_G \colon (k,w,v) \mapsto C^{-1}h_0 C$ is given by 
  \begin{align}
    2\pi_G^*\omega_G 
    &= (dC\,C^{-1},\Ad_{h_0}(dC\,C^{-1})) \\
    &= (dp\,p^{-1} + \Ad_p(dv\,v^{-1}), \Ad_{h_0}(dp\,p^{-1} + \Ad_p(dv\,v^{-1})) \\
    &= (dp\,p^{-1},\Ad_{h_0}(dp\,p^{-1})) + (dp\,p^{-1},\Ad_{h_0 p}(dv\,v^{-1})) \\
    &\quad + (\Ad_p(dv\,v^{-1}), \Ad_{h_0} (dp\,p^{-1})) + (\Ad_p(dv\,v^{-1}), \Ad_{h_0 p}(dv\,v^{-1})). 
  \end{align}
  Since $p^{-1}h_0 p = hu$, we obtain  
  \begin{align}
    2\pi_G^*\omega_G 
    &= (dp\,p^{-1},\Ad_{h_0}(dp\,p^{-1})) + (p^{-1}dp,\Ad_{hu}(dv\,v^{-1})) \\
    &\quad + (dv\,v^{-1}, \Ad_{hu} (p^{-1}dp)) + (dv\,v^{-1}, \Ad_{hu}(dv\,v^{-1})) \\
    &= (dp\,p^{-1},\Ad_{h_0}(dp\,p^{-1})) \\
    &\quad + (dv\,v^{-1}, -\Ad_{hu}^{-1}(p^{-1}dp) + \Ad_{hu} (p^{-1}dp) + \Ad_{hu}(dv\,v^{-1})). 
  \end{align}
  Since $(\frku,\frkh+\frku)=0$, the first term $(dp\,p^{-1},\Ad_{h_0}(dp\,p^{-1}))$ 
  is equal to $(dk\,k^{-1},\Ad_{h_0}(dk\,k^{-1}))$, which is the pullback of 
  $2\omega_H$ along the map $\pi_H \colon (k,w,v) \mapsto k^{-1}h_0 k$. 
  On the other hand, we have $hu = p^{-1}h_0 p$, and hence 
  \begin{align}
    d(hu)(hu)^{-1} &= -p^{-1}dp + \Ad_{p^{-1}h_0}(dp\,p^{-1}) \\
    &= -p^{-1}dp + \Ad_{hu}(p^{-1}dp).
  \end{align}
  Similarly, we have $(hu)^{-1}d(hu) = -\Ad_{hu}^{-1}(p^{-1}dp) + p^{-1}dp$. Thus 
  \[
    (hu)^{-1}d(hu) + d(hu)(hu)^{-1} = -\Ad_{hu}^{-1}(p^{-1}dp) + \Ad_{hu}(p^{-1}dp),
  \]
  whence the desired formula.
\end{proof}

\begin{remark}
  In particular, if $h_0 \in T$ and $Z_G(h_0)=H$, then $\calC = \{ h_0 \}$ and 
  $\tau \colon U^+ \times U^- \to \frkC$ gives an affine chart on $\frkC$.
  Replacing $h_0$ by its Weyl group translates in the definition of $\tau$, 
  we obtain various affine charts which cover $\frkC$.
\end{remark}

\section{Unfolding: Case of Poincar\'e rank $1$}\label{sec:unfolding1}

In the case of $r=1$, the space $\calA(Q)$ is a usual fission space 
$\FS{G}{H}^1 = G \times H \times U^+ \times U^-$ considered in \cite{Boa09}. 
In this section, we relate such a fission space 
to another quasi-Hamiltonian $G \times H$-space introduced in \cite{Boa11}.

As in the previous section, 
let $P$ be a parabolic subgroup of $G$ 
and $H$ be a Levi subgroup of $P$.
Let $U^+$ be the unipotent radical of $P$ with Lie algebra $\frku$ 
and $U^-$ be that of the opposite parabolic subgroup $P^-$.

Let $P^-$ act on the product $G \times P^-$ by $q \cdot (C,p)=(qC,qpq^{-1})$ 
and $\bbM$ be the quotient of $G \times P^-$ by the action of the subgroup $U^-$: 
\[
  \bbM = (G \times P^-)/U^-.
\]
The space $\bbM$ has a residual action of $H = P^-/U^-$ 
and a commuting action of $G$ induced from the action 
$g \cdot (C,p) = (Cg^{-1},p)$ on $G \times P^-$. 
Let $\varpi \colon P^- \to H$ be the canonical projection 
and $\pi \colon G \times P^- \to \bbM$ be the quotient map.

\begin{theorem}[{\cite{Boa11}}]
  The $G \times H$-variety $\bbM$ is a quasi-Hamiltonian $G \times H$-space
  with moment map
  \[
    \mu \colon \bbM \to G \times H, \quad [C,p] \mapsto (C^{-1} p C, \varpi(p)^{-1}),
  \]
  and quasi-Hamiltonian two-form $\omega$ defined by the condition 
  \[
    2\pi^*\omega = (dC\,C^{-1},\Ad_p(dC\,C^{-1})) 
    + (dC\,C^{-1},p^{-1}dp + dp\,p^{-1}).
  \]
\end{theorem}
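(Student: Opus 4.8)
The plan is to realize $\bbM$ as a partial (parabolic) reduction of the double $\bbD(G)$ and to transport the quasi-Hamiltonian structure along the quotient map $\pi$. Write a point of $\bbD(G)=G\times G$ as $(C,h)$, with moment map $\mu_{\bbD}\colon(C,h)\mapsto(C^{-1}hC,h^{-1})$ and two-form as in Example~\ref{eg:double}. The first observation is that the locus $h\in P^-$ is exactly $\mu_{\bbD,2}^{-1}(P^-)$, that the defining $U^-$-action $q\cdot(C,p)=(qC,qpq^{-1})$ on $G\times P^-$ is the restriction to $U^-\subset G$ of the second $G$-factor action of $\bbD(G)$, and that the separate $G$-action $g\cdot(C,p)=(Cg^{-1},p)$ is the first-factor action. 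Comparing the defining condition for $2\pi^*\omega$ with the two-form of Example~\ref{eg:double} gives $\pi^*\omega=j^*\omega_{\bbD}$, where $j\colon G\times P^-\hookrightarrow\bbD(G)$ is the inclusion; thus the claimed form is literally the double's form restricted to $G\times P^-$ and pushed down.

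First I would show that $j^*\omega_{\bbD}$ descends, i.e.\ is $U^-$-basic. Invariance under $U^-$ is immediate from the $G\times G$-invariance of $\omega_{\bbD}$. For horizontality, apply axiom (QH2) of the double to a generator $\xi\in\frku^-=\Lie U^-$ of the second factor: since $\mu_{\bbD,2}=h^{-1}$ one computes $\iota_{\xi_M}\omega_{\bbD}=-(dh\,h^{-1}+h^{-1}dh,\xi)$, and on the locus $h=p$ the one-forms $dp\,p^{-1}$ and $p^{-1}dp$ take values in $\mathfrak{p}^-=\Lie P^-$. The key linear-algebra fact is $(\mathfrak{p}^-,\frku^-)=0$ (because $\frku^\pm$ are isotropic and orthogonal to $\frkh$), so this contraction vanishes and $\pi^*\omega$ kills the vertical directions. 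Hence $\omega$ is a well-defined two-form on $\bbM$ with $\pi^*\omega=j^*\omega_{\bbD}$.

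Next I would transfer the axioms. The two moment maps are related by $\mu_{\bbM,1}\circ\pi=\mu_{\bbD,1}\circ j$ (both equal $C^{-1}pC$) and $\mu_{\bbM,2}\circ\pi=\varpi\circ(\mu_{\bbD,2}\circ j)$, where $\varpi\colon P^-\to H$ is the projection. Since $\pi$ is a surjective submersion, $\pi^*$ is injective on forms, so (QH1) for $\bbM$ follows from (QH1) for $\bbD$ once the Cartan $3$-forms of the moment maps are matched: the first components agree on the nose, while for the second components one uses that $\varpi$ is a group homomorphism and that the same orthogonality $(\mathfrak{p}^-,\frku^-)=0$ collapses the cubic form $(\theta,[\theta,\theta])$ of $p\in P^-$ (with $\theta=p^{-1}dp$) onto its $\frkh$-part, which is precisely the pullback along $\varpi$ of the Cartan form of $H$. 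Axiom (QH2) is obtained factor by factor: for the first $G$-factor the action and the moment map descend unchanged from $\bbD$; for the residual $H$-action one lifts $\xi\in\frkh$ to $\mathfrak{p}^-$, applies (QH2) for the double, and discards the $\frku^-$-ambiguity by the same vanishing.

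The main obstacle will be (QH3), the minimal-degeneracy condition $\Ker\omega_p\cap\Ker(d\mu)_p=\{0\}$, which is not formally inherited and must be checked by hand. Given $v\in T_{[C,p]}\bbM$ with $\iota_v\omega=0$ and $(d\mu_{\bbM})v=0$, I would choose a lift $\tilde v$ to $G\times P^-$, so that $\iota_{\tilde v}\omega_{\bbD}$ vanishes on $T(G\times P^-)$; I would then use (QH2) for the double on the transverse $\frku^+$-directions of the second factor, together with the nondegeneracy of the pairing between $\frku^+$ and $\frku^-$ and the vanishing of the $\frkh$-component of $d\mu_{\bbM}(v)$, to correct $\tilde v$ by a vertical vector so that $\iota_{\tilde v}\omega_{\bbD}=0$ on all of $T\bbD(G)$. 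Then (QH3) for $\bbD$ forces $\tilde v\in\Ker(d\mu_{\bbD})$, and a final check identifies $\tilde v$ with a $U^-$-orbit direction, i.e.\ $v=0$. The dimension count $\dim\bbM=\dim G+\dim H$ serves as a useful consistency check throughout. Combining the four steps yields the asserted quasi-Hamiltonian $G\times H$-structure on $\bbM$.
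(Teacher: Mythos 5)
First, a point of comparison: the paper itself gives no proof of this theorem --- it is quoted from Boalch \cite{Boa11} --- so there is no in-paper argument to measure you against. Your plan, realizing $\bbM$ as the parabolic reduction $\mu_{\bbD,2}^{-1}(P^-)/U^-$ of the double $\bbD(G)$ and transporting the structure along $\pi$, is the natural route and is in the spirit of Boalch's fission constructions. Your first three steps are sound: $j^*\omega_{\bbD}$ is visibly the claimed form; $U^-$-basicness follows from $(\mathfrak{p}^-,\frku^-)=0$ exactly as you say; (QH1) descends because $\pi^*$ is injective on forms ($\pi$ is a principal $U^-$-bundle --- worth recording that $U^-$ acts freely already through the $C$-factor) and because the cubic term of $\theta=p^{-1}dp$ collapses onto its $\frkh$-part, which equals $\varpi^*(k^{-1}dk)$; and (QH2) for the residual $H$-action follows by lifting $\xi\in\frkh$ into $\mathfrak{p}^-$ and discarding the $\frku^-$-terms by the same orthogonality.

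The genuine flaw is in your treatment of (QH3), in two respects. First, the inference ``then (QH3) for $\bbD$ forces $\tilde v\in\Ker(d\mu_{\bbD})$'' is backwards: (QH3) asserts $\Ker\omega\cap\Ker d\mu=\{0\}$, so membership in $\Ker\omega_{\bbD}$ gives no information about $\Ker d\mu_{\bbD}$; moreover a corrected lift will in general \emph{not} lie in $\Ker d\mu_{\bbD}$, since its $\mu_{\bbD,2}$-derivative is $\frku^-$-valued and nonzero precisely in the vertical directions, so the appeal to (QH3) of the double cannot close the argument as stated. Second, the detour is unnecessary: the conditions you have already extracted finish the proof directly. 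Trivialize $T_{(C,p)}(G\times P^-)$ by $\alpha=(dC\,C^{-1})(\tilde v)\in\frkg$ and $\beta=(dp\,p^{-1})(\tilde v)\in\mathfrak{p}^-$. Contracting $\pi^*\omega$ with the $P^-$-directions and using $(\mathfrak{p}^-)^{\perp}=\frku^-$ gives $(1+\Ad_p)\alpha\in\frku^-$; vanishing of $d(C^{-1}pC)(\tilde v)$ gives $\beta=(1-\Ad_p)\alpha$; vanishing of $d(\varpi(p)^{-1})(\tilde v)$ gives $\beta\in\frku^-$. Adding the first and second conditions yields $2\alpha=(1+\Ad_p)\alpha+\beta\in\frku^-$, and then $(\alpha,(1-\Ad_p)\alpha)$ is precisely the value at $(C,p)$ of the fundamental vector field of an element of $\frku^-$, i.e.\ $\tilde v$ is tangent to the $U^-$-orbit and $v=0$. (The contractions in the $G$-factor directions are then automatic, since $\beta=(1-\Ad_p)\alpha$ implies $(\Ad_{p^{-1}}-\Ad_p)\alpha=(1+\Ad_{p^{-1}})\beta$.) With this repair your argument is complete.
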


Take $t \in H$ so that $Z_G(t)^0=H$. 
Note that such an element always exists; the Levi subgroup $H$ is known 
to be the centralizer of some torus $S \subset T$, 
and generic elements of $S$ satisfy the condition (see \prpref{prp:centralizer}).
By \lmmref{lmm:conj-unip}, the set $t U^+$ is contained in the conjugacy class 
$\frkC$ of $t$.
Thus we can define the following map, which we call the \emph{unfolding map} 
for $\FS{G}{H}^1$ associated to $t$:
\[
  \Upsilon_t \colon \FS{G}{H}^1 \to \bbM \circledast_G \frkC, 
  \quad (C,h,u,v) \mapsto ([C,ht^{-1}v],C^{-1}v^{-1}tuvC).
\]
Let $\mu \colon \bbM \circledast_G \frkC \to G \times H$ be the moment map. 

\begin{theorem}\label{thm:unfolding1}
  The map $\Upsilon_t$ is a $G \times H$-equivariant \'etale morphism 
  (an open immersion if $Z_G(t)=H$). 
  It intertwines the quasi-Hamiltonian two-forms and 
  \[
    \mu(\Upsilon_t(C,h,u,v)) = (C^{-1}huvC,t h^{-1}).
  \] 
\end{theorem}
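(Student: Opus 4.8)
The plan is to establish the three assertions of \thmref{thm:unfolding1} — the moment-map formula, the \'etale/open-immersion property, and the intertwining of two-forms — in that order, reserving the middle one for the main effort.

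First I would record the moment-map formula and $G \times H$-equivariance by direct computation. Since $Z_G(t)^0 = H$ and $H$ is connected, the Levi $H$ is contained in $Z_G(t)$, so $t$ lies in the center of $H$ and $ktk^{-1} = t$ for all $k \in H$; this is the only nontrivial fact needed to verify that $\Upsilon_t$ intertwines the two actions, since it makes the $\bbM$- and $\frkC$-components transform by $[C,p] \mapsto [kCg^{-1},kpk^{-1}]$ and $X \mapsto gXg^{-1}$ respectively. For the moment map, writing $p = ht^{-1}v \in P^-$ one has $\varpi(p) = ht^{-1}$, so the $H$-component of $\mu \circ \Upsilon_t$ is $\varpi(p)^{-1} = th^{-1}$. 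The $G$-component is the fusion product $(C^{-1}pC)(C^{-1}v^{-1}tuvC)$, in which the inner factors cancel successively ($CC^{-1}$, then $vv^{-1}$, then $t^{-1}t$), leaving $C^{-1}huvC$. This yields the displayed formula.

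For the \'etale property I would exploit that both $\FS{G}{H}^1$ and $\bbM \circledast_G \frkC$ carry a free $G$-action through the slot $C \mapsto Cg^{-1}$, and that $\Upsilon_t$ is $G$-equivariant; hence $\Upsilon_t$ is \'etale (resp.\ an open immersion) if and only if the induced map on $G$-quotients is. Identifying $\frkC \simeq G/Z_G(t)$ via $X = gtg^{-1} \mapsto g Z_G(t)$, one gets $(\bbM \circledast_G \frkC)/G \simeq \bbM/Z_G(t)$, and using $\varpi$ together with \lmmref{lmm:conj-unip} one recovers $h = \varpi(p)t$ and $v = \varpi(p)^{-1}p \in U^-$ from a representative $(C,p)$. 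Under these identifications the quotient map becomes, up to the factor recording $h \in H$, the map $\tau$ of \prpref{prp:conj-tri} with $h_0 = t$ and $\calC = \{t\}$: the $\frkC$-coordinate $C^{-1}v^{-1}tuvC$ is the $\Ad_{C^{-1}}$-translate of $\tau(u,v) = v^{-1}tuv$. Since $\tau$ is \'etale, and an open immersion precisely when $Z_G(t) \subset H$ (equivalently $Z_G(t) = H$, as $H = Z_G(t)^0$ always), the same dichotomy transfers to $\Upsilon_t$. The main obstacle is bookkeeping the defining $U^-$-quotient of $\bbM$: a point determines $(C,p)$ only up to $(C,p) \sim (qC,qpq^{-1})$, so $C^{-1}v^{-1}tuvC$ is not a function on $\bbM \circledast_G \frkC$ and must be shown to descend after passing to the $G$-quotient; the uniqueness of the compensating $q \in U^-$ — equivalently the solvability of a single equation in $\frkC$ for the image points — is what makes $\Upsilon_t$ injective in the open-immersion case. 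I expect to settle this exactly as in \lmmref{lmm:conj-unip}, by an inductive solve along the lower central series of $\frku^-$, reusing that $\tau$ is an isomorphism onto its image.

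Finally, to prove that $\Upsilon_t$ intertwines the quasi-Hamiltonian two-forms, I would pull back the two-form on $\bbM \circledast_G \frkC$ and expand it by the fusion formula as $\omega_\bbM + \omega_\frkC - \tfrac12(\mu_\bbM^{-1}d\mu_\bbM,\,d\mu_\frkC\,\mu_\frkC^{-1})$, where $\mu_\bbM = C^{-1}pC$ and $\mu_\frkC = X$ are the fused $G$-moment maps. The term $\omega_\bbM$ is given explicitly (with $p = ht^{-1}v$) by $2\pi^*\omega_\bbM = (dC\,C^{-1},\Ad_p(dC\,C^{-1})) + (dC\,C^{-1},p^{-1}dp + dp\,p^{-1})$, while for $\omega_\frkC$ I would write the $\frkC$-coordinate as $(vC)^{-1}(tu)(vC)$ and substitute the pullback formula of \prpref{prp:conj-tri} specialized to $\calC = \{t\}$ (so $\omega_H = 0$), accounting for the extra conjugation by $C$ through the $G$-invariance of $\omega_\frkC$ and axiom (QH2), which produces the $\Ad$- and $dC\,C^{-1}$-cross terms. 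It then remains to collect all contributions and check that they collapse to
\[
2\omega = (dC\,C^{-1},\Ad_b(dC\,C^{-1})) + (dC\,C^{-1},db\,b^{-1}) + (dC_0\,C_0^{-1},h^{-1}dh) - \sum_{j=0}^{1}(C_j^{-1}dC_j,C_{j+1}^{-1}dC_{j+1})
\]
of the fission space $\FS{G}{H}^1$, where $b = huv$, $C_0 = uvC$, $C_1 = vC$, and $C_2 = C$. This last step is a long but entirely mechanical cancellation, and I anticipate no conceptual difficulty once the $\bbM$- and $\tau$-contributions are rewritten in a common set of Maurer--Cartan forms.
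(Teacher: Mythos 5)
Your moment-map and equivariance computations are correct (the only nontrivial input being $H \subset Z_G(t)$, hence $t \in Z(H)$), and your plan for the two-form comparison is workable in outline, though one caveat: ``$G$-invariance of $\omega_\frkC$ and (QH2)'' does not by itself account for the terms produced by the \emph{varying} conjugation in the $\frkC$-coordinate $X = C^{-1}(v^{-1}tuv)C$; invariance handles a fixed conjugation only, and you need the explicit pullback formula of Example~\ref{eg:G-conj} (or a direct Maurer--Cartan computation, as the paper carries out in the analogous \lmmref{lmm:unfolding-induction}) to generate the cross terms in $dC\,C^{-1}$.

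The genuine gap is in your \'etale argument: the diagonal $G$-action on $\bbM \circledast_G \frkC$ is \emph{not} free, so the asserted equivalence ``$\Upsilon_t$ is \'etale iff the induced map on $G$-quotients is'' fails at its premise. The stabilizer of $([C,p],X)$ consists of the elements $C^{-1}q^{-1}C$ with $q \in U^- \cap Z_G(p)$ and $q^{-1} \in Z_G(CXC^{-1})$; taking $C=p=1$ and $X = gtg^{-1}$ with $g$ chosen so that $gxg^{-1} \in U^-$ for some nontrivial unipotent $x \in H \subset Z_G(t)$ (possible whenever $H$ is not a torus), the stabilizer contains the whole one-parameter unipotent subgroup through $gxg^{-1}$, so it is even positive-dimensional. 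Consequently neither $(\bbM \circledast_G \frkC)/G$ nor your substitute $\bbM/Z_G(t)$ is available as a variety over which the projection is a principal bundle ($\bbM \circledast_G \frkC$ is not affine, so no GIT quotient is at hand either), and \'etale-ness cannot be transferred the way you propose. The action \emph{is} free at points of the image of $\Upsilon_t$ --- one checks $U^- \cap w^{-1}Z_G(t)w = \{1\}$ for $w \in U^+$ by uniqueness in the big cell $U^+HU^-$ --- but you would still have to construct a quotient near such points. The paper avoids quotienting by $G$ altogether: it factors $\Upsilon_t$ as the isomorphism $\FS{G}{H}^1 \to G \times P^- \times U^+$, $(C,h,u,v) \mapsto (vC,vht^{-1},u)$, followed by the map induced on \emph{free} $U^-$-quotients by the $U^-$-equivariant map $\varphi(C,p,u,v) = (C,p,C^{-1}v^{-1}tuvC)$, which is \'etale (an open immersion if $Z_G(t)=H$) directly by \prpref{prp:conj-tri}; since both $U^-$-projections are principal bundles, \'etale-ness descends immediately. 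Replacing your quotient step by this factorization --- which uses the same two ingredients you identified, \lmmref{lmm:conj-unip} and \prpref{prp:conj-tri}, but only ever divides by the free $U^-$-action --- repairs the proof.
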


\begin{proof}
  Observe that the map $\Upsilon_t$ is the composite of the two maps
  \begin{align}
    \FS{G}{H}^1 &\to G \times P^- \times U^+, \quad (C,h,u,v) \mapsto (vC,vht^{-1},u); \\
    G \times P^- \times U^+ &\to \bbM \circledast_G \frkC, \quad (C,p,u) \mapsto ([C,p],C^{-1}tuC).
  \end{align}
  The former map is an isomorphism with inverse $(C,p,u) \mapsto (v^{-1}C,kt,u,v)$ where $p=vk$ with $v \in U^-$, $k \in H$. 
  On the other hand, if we let $U^-$ act on $G \times P^- \times U^+ \times U^-$ by 
  \[
    w \cdot (C,p,u,v) = (wC,wpw^{-1},u,vw^{-1}),
  \]
  then the latter map is induced from the $U^-$-equivariant map 
  \[
    \varphi \colon G \times P^- \times U^+ \times U^- \to G \times P^- \times \frkC, \quad
    (C,p,u,v) \mapsto (C,p,C^{-1}v^{-1}tuvC)
  \]
  through the obvious isomorphism $G \times P^- \times U^+ \simeq (G \times P^- \times U^+ \times U^-)/U^-$.
  By \prpref{prp:conj-tri}, $\varphi$ is an \'etale morphism (an open immersion if $Z_G(t)=H$). 
  Thus $\Upsilon_t$ has the same property.
\end{proof}

Let $\bbP$ be the variety of parabolic subgroups of $G$ conjugate to $P^-$. 
For an $H$-conjugacy class $\calC \subset H$, 
define the \emph{enriched conjugacy class} associated to $\calC$ to be 
the variety $\wh{\calC}$ consisting of pairs $(g,P)$ with $P \in \bbP$ and $g \in P$ such that 
$CPC^{-1}=P^-$, $CgC^{-1} \in \calC U^-$ for some $C \in G$. 
As pointed out in \cite{Boa11}, the reduction $\bbM_{\calC} \coloneqq \bbM \spqa{\calC^{-1}} H$ 
is isomorphic to $\wh{\calC}$ via the map 
\[
  \bbM_{\calC} \to \wh{\calC}, \quad [C,p] \mapsto (C^{-1}pC,C^{-1}P^-C),
\]
which induces the structure of a quasi-Hamiltonian $G$-space on $\wh{\calC}$ with moment map $(g,P) \mapsto g$.

\begin{corollary}
  For any $H$-conjugacy class $\calC$, 
  the map $\Upsilon_t$ induces a $G$-equivariant \'etale morphism 
  \[
    \FS{G}{H}^1 \spqa{\calC^{-1}} H \to \wh{\calC}_0 \circledast_G \frkC 
  \]
  intertwining the quasi-Hamiltonian $G$-structures, 
  where $\calC_0 = \calC t^{-1}$.
  It is an open immersion if $Z_G(t)=H$.
\end{corollary}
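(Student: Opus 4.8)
The plan is to deduce the corollary from \thmref{thm:unfolding1} by reducing both sides by the $H$-action along $\calC^{-1}$. Since $\Upsilon_t$ is a $G \times H$-equivariant morphism of quasi-Hamiltonian $G \times H$-spaces intertwining the two-forms, and since its $H$-moment map is $(C,h,u,v) \mapsto t h^{-1}$ by the explicit formula in \thmref{thm:unfolding1}, I first want to identify how $\Upsilon_t$ interacts with the level sets of the $H$-moment maps on source and target. On the source $\FS{G}{H}^1$ the $H$-moment map is $h \mapsto h^{-1}$, so the preimage of $\calC^{-1}$ is $\{h \in \calC\}$; under $\Upsilon_t$ this maps into the locus where the $H$-moment map of $\bbM \circledast_G \frkC$ takes the value $t h^{-1}$ with $h \in \calC$, i.e.\ into $\mu_H^{-1}((\calC t^{-1})^{-1}) = \mu_H^{-1}(\calC_0^{-1})$ where $\calC_0 = \calC t^{-1}$.

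\medskip

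The key point is that $\calC_0 = \calC t^{-1}$ really is an $H$-conjugacy class: since $t$ is central in $H$ (as $Z_G(t)^0 = H$ forces $t \in Z(H)$), right multiplication by $t^{-1}$ commutes with $H$-conjugation, so it carries the conjugacy class $\calC$ to the conjugacy class $\calC_0$. Next I would check that the $H$-moment map on the fusion product $\bbM \circledast_G \frkC$ is simply the $H$-moment map of the $\bbM$-factor (the $\frkC$-factor carries only a $G$-action and no $H$-action), so reducing $\bbM \circledast_G \frkC$ by $H$ along $\calC_0^{-1}$ produces $(\bbM \spqa{\calC_0^{-1}} H) \circledast_G \frkC = \bbM_{\calC_0} \circledast_G \frkC$, which is identified with $\wh{\calC}_0 \circledast_G \frkC$ by the isomorphism recalled just before the corollary. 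On the source, the same reduction yields $\FS{G}{H}^1 \spqa{\calC^{-1}} H$ by definition.

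\medskip

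With these identifications in place, I would invoke functoriality of reduction: an $H$-equivariant étale (resp.\ open immersion) morphism of quasi-Hamiltonian $G \times H$-spaces intertwining the two-forms restricts to the $H$-moment level sets and descends to an étale (resp.\ open immersion) $G$-equivariant morphism of the reduced quasi-Hamiltonian $G$-spaces, again intertwining the two-forms. Because $\Upsilon_t$ sends $\mu_H^{-1}(\calC^{-1})$ into $\mu_H^{-1}(\calC_0^{-1})$ and is $H$-equivariant, it descends to the quotients by $H$; the étale property (resp.\ open immersion if $Z_G(t)=H$) and the intertwining of two-forms are inherited because étaleness is preserved under base change to the level set and descent along the smooth quotient map, while the two-form statement follows since the quasi-Hamiltonian two-forms on the reductions are by \prpref{reduction2} the descents of the restricted two-forms, which $\Upsilon_t$ already intertwines upstairs.

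\medskip

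The main obstacle I expect is bookkeeping rather than any deep difficulty: one must verify carefully that the $H$-moment map of the fused space $\bbM \circledast_G \frkC$ coincides with that of $\bbM$ alone (so that the $H$-reduction only touches the $\bbM$-factor and the fusion with $\frkC$ survives intact), and that $\Upsilon_t$ genuinely maps the relevant level set into the relevant level set with the correct shift by $t$. The identity $\mu_H(\Upsilon_t(C,h,u,v)) = t h^{-1}$ from \thmref{thm:unfolding1} makes this shift explicit, and the claim $Z_G(t)^0 = H \Rightarrow t \in Z(H)$ ensures $\calC_0$ is well-defined as a conjugacy class; once these are confirmed the corollary is a formal consequence of \thmref{thm:unfolding1} and \prpref{reduction2}.
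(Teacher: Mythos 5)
Your proposal is correct and is precisely the derivation the paper intends (the corollary is stated without proof as a formal consequence of \thmref{thm:unfolding1}): the moment-map shift $\mu_H \circ \Upsilon_t = t\,\mu_H$ matches the level sets $\mu_H^{-1}(\calC^{-1})$ and $\mu_H^{-1}(\calC_0^{-1})$ exactly, $H$-reduction commutes with the $G$-fusion since fusion leaves the $H$-moment map untouched, and $\bbM \spqa{\calC_0^{-1}} H \simeq \wh{\calC}_0$ gives the target, with \'etaleness/open immersion and the two-form identity descending through the free-$H$ geometric quotients via \prpref{reduction2}. The only point you leave tacit is the freeness of the $H$-action on both level sets (clear here, since $H$ acts freely on the $C$-component of $\FS{G}{H}^1$ and on $\bbM$), which is what licenses the smoothness of the reductions and the descent argument.
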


We are mainly interested in the following situation, 
in which the enriched conjugacy class is a covering of a usual conjugacy class.

\begin{proposition}\label{prp:enrichedconj}
  Take $h_0 \in H$ so that $Z_G(h_0)^0 \subset H$ and let 
  $\calC_0 \subset H$ $($resp.\ $\frkC_0 \subset G)$ be 
  its $H$-conjugacy class $($resp.\ $G$-conjugacy class$)$. 
  Then the moment map $\mu \colon \wh{\calC}_0 \to G$ defines 
  a $G$-equivariant $($surjective$)$ \'etale morphism   
  $\wh{\calC}_0 \to \frkC_0$ intertwining the quasi-Hamiltonian $G$-structures.
  It is an isomorphism if $Z_G(h_0) \subset H$.
\end{proposition}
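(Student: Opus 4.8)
The plan is to work with the concrete model $\wh{\calC}_0\cong\bbM_{\calC_0}=\bbM\spqa{\calC_0^{-1}}H$ supplied just above the statement. Writing $S=\varpi^{-1}(\calC_0)\subset P^-$, the reduced space is $\bbM_{\calC_0}=(G\times S)/P^-$ for the action $q\cdot(C,p)=(qC,qpq^{-1})$, and under the isomorphism with $\wh{\calC}_0$ the moment map $\mu$ becomes $[C,p]\mapsto C^{-1}pC$. First I would record the elementary parts. The $U^-$-analogue of \lmmref{lmm:conj-unip}, obtained by applying it to the opposite parabolic $P^-$, shows that for $h\in\calC_0$ and $v'\in U^-$ there is a unique $v\in U^-$ with $hv'=v^{-1}hv$; combined with the fact that every $p\in S$ has the form $hv$ with $h=\varpi(p)\in\calC_0$ and $v\in U^-$, this gives at once that $C^{-1}pC$ is $G$-conjugate to $h$, whence $\mu(\wh{\calC}_0)\subset\frkC_0$. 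Surjectivity follows because $[1,h_0]\mapsto h_0$ and $\mu$ is $G$-equivariant for conjugation (the action $g\cdot[C,p]=[Cg^{-1},p]$ sends $C^{-1}pC$ to its $g$-conjugate), while $\frkC_0=G\cdot h_0$.

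The crux is étaleness, which I would establish by trivializing over the big cell of $\bbP=G/P^-$. On the open locus where $C\in P^-U^+$, every point of $(G\times S)/P^-$ has a unique representative with $C=u\in U^+$; writing $p=hv$ with $h\in\calC_0$, $v\in U^-$ identifies this chart with $U^+\times\calC_0\times U^-$, and there $\mu$ becomes $(u,h,v)\mapsto u^{-1}(hv)u$. This coincides with the map $\tau^-$ obtained by applying \prpref{prp:conj-tri} to the \emph{opposite} parabolic $P^-$ (whose unipotent radical is $U^-$ and whose opposite radical is $U^+$), so that proposition gives that $\mu$ is étale on the chart, and an open immersion there when $Z_G(h_0)\subset H$. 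Since the $U^+$-cell is open dense in $\bbP$ and its $G$-translates cover $\bbP$, $G$-equivariance propagates étaleness to all of $\wh{\calC}_0$; in particular $\wh{\calC}_0$ is smooth of dimension $\dim\frkC_0$. I expect this identification of the chart map with $\tau^-$, together with checking that the translated charts cover, to be the main technical step.

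For the intertwining of the two-forms I would avoid a direct computation and argue through the moment-map axiom. Because $\mu$ intertwines the moment maps (that of $\frkC_0$ is the inclusion) and is $G$-equivariant, fundamental vector fields are $\mu$-related, so both $\omega_{\wh{\calC}_0}$ and $\mu^*\omega_{\frkC_0}$ satisfy (QH2) with the same moment map; hence their difference annihilates every fundamental vector field $\xi_{\wh{\calC}_0}$. Now $\wh{\calC}_0$ is connected, being $G\cdot(\calC_0U^-\times\{P^-\})$ with $G$ and the fibre $\calC_0U^-$ connected, and the orbit $G\cdot(h_0,P^-)$ is open: its stabilizer $Z_G(h_0)\cap P^-$ has dimension $\dim Z_G(h_0)$ since $Z_G(h_0)^0\subset H\subset P^-$, so the orbit has dimension $\dim\frkC_0=\dim\wh{\calC}_0$. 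On this dense open orbit the fundamental vector fields span the tangent spaces, so the difference of the forms vanishes there, hence everywhere.

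Finally, when $Z_G(h_0)\subset H$ the map is an isomorphism. We already have étale and surjective, so it suffices to prove injectivity, and by $G$-equivariance it is enough to see that $(h_0,P)\in\wh{\calC}_0$ forces $P=P^-$. Choosing a witness $C$ with $CPC^{-1}=P^-$ and $Ch_0C^{-1}\in\calC_0U^-$, the $U^-$-lemma lets me replace $C$ by $wC$ (with $w\in U^-\subset P^-$, so $P$ is unchanged) to arrange $Ch_0C^{-1}=h'\in\calC_0$; writing $h'=kh_0k^{-1}$ with $k\in H$ gives $k^{-1}C\in Z_G(h_0)\subset H\subset P^-$, whence $C\in P^-$ and $P=C^{-1}P^-C=P^-$. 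An injective étale surjection onto the smooth irreducible variety $\frkC_0$ is an isomorphism, which completes the proof.
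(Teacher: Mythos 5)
Your proof is correct, and while it pivots on the same chart as the paper --- the open immersion $\calC_0 \times U^+ \times U^- \to \bbM_{\calC_0}$, $(h,u,v) \mapsto [u,hv]$, along which $\mu$ becomes the map $\tau$ of \prpref{prp:conj-tri} with $U^+$ and $U^-$ swapped --- it diverges from the paper's argument at all three remaining steps, in each case legitimately. (i) For \'etaleness, the paper first proves that $\mu$ intertwines the two-forms and then invokes the quasi-Hamiltonian rigidity statement \lmmref{lmm:etale} (equal dimensions plus form- and moment-intertwining force \'etaleness); you instead cover $\wh{\calC}_0$ by the $G$-translates of the chart (which works, since the big-cell locus $C \in P^-U^+$ is $P^-$-stable with the unique normalized representative $C=u\in U^+$, and its right $G$-translates exhaust $G$) and propagate \'etaleness by equivariance, bypassing \lmmref{lmm:etale} entirely. (ii) For the two-forms, the paper computes the pullback of the reduced form on $\bbM_{\calC_0} \simeq \bbM \glue{H} \calC_0$ along the chart and matches it with the formula in \prpref{prp:conj-tri}, then extends by density; your argument via (QH2) is computation-free and sound: since $\mu$ intertwines the moment maps, $\mu^*\omega_{\frkC_0}$ satisfies (QH2) with the same moment map as $\omega_{\wh{\calC}_0}$, so their difference annihilates all fundamental vector fields, and your verification that the orbit of $(h_0,P^-)$ is open (stabilizer $Z_G(h_0)\cap P^- \supset Z_G(h_0)^0$ has full dimension) and that $\wh{\calC}_0$ is connected, hence irreducible, makes the density step complete. (iii) For the isomorphism when $Z_G(h_0) \subset H$, the paper uses birationality of $\tau$ plus Zariski's main theorem, whereas you prove set-theoretic injectivity directly, reducing by equivariance to the fiber over $h_0$ and using the $U^-$-version of \lmmref{lmm:conj-unip} to normalize the witness $C$ into $P^-$; the conclusion ``injective \'etale surjective onto a smooth variety implies isomorphism'' is standard over $\C$. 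What your route buys is independence from \lmmref{lmm:etale} and from the explicit two-form computation on the chart; what the paper's route buys is that the single rigidity lemma and the chart computation also serve as the template reused in the more involved \lmmref{lmm:unfolding-induction}, so the two proofs are organized around different reusable components.
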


\begin{proof}
  We identify $\wh{\calC}_0$ with $\bbM_{\calC_0}$. 
  By \lmmref{lmm:conj-unip}, the moment map $\mu \colon [C,p] \mapsto C^{-1}pC$ takes values in $\frkC_0$. 
  Thus $\mu$ defines a morphism $\bbM_{\calC_0} \to \frkC_0$, which is surjective by $G$-equivariance.
  Now observe that the map 
  \[
    \iota \colon \calC_0 \times U^+ \times U^- \to \bbM_{\calC_0}, \quad (h,u,v) \mapsto [u,hv]
  \]
  is an open immersion, along which $\mu \colon \wh{\calC}_0 \to \frkC_0$ is pulled back to  
  \[
    \tau \colon \calC_0 \times U^+ \times U^- \to \frkC_0, \quad (h,u,v) \mapsto u^{-1}h v u,
  \]
  which is \'etale (an open immersion if $Z_G(h_0) = H$) by \prpref{prp:conj-tri}. 
  It is easy to see that the pullback 
  of the quasi-Hamiltonian two-form on $\bbM_{\calC_0} \simeq \bbM \glue{H} \calC_0$ 
  along $\iota$ coincides with the two-form $\tau^*\omega_G$ given in \prpref{prp:conj-tri} with $U^+$ and $U^-$ swapped. 
  Thus $\mu$ intertwines the quasi-Hamiltonian two-forms on an open dense subset of $\wh{\calC}_0$, 
  and hence on the whole $\wh{\calC}_0$ by continuity. 
  Clearly the moment maps for $\wh{\calC}_0$ and $\frkC_0$ match up via $\mu$.
  Hence $\mu \colon \wh{\calC}_0 \to \frkC_0$ is \'etale by the lemma below (in particular, it is quasi-finite). 
  If $Z_G(h_0) \subset H$, then $\mu$ is birational since $\tau$ is an open immersion.
  By Zariski's main theorem (a quasi-finite birational morphism between irreducible varieties is an open immersion if the target variety is normal), it is an isomorphism.
\end{proof}

\begin{lemma}\label{lmm:etale}
  Let $(M,\omega_M,\mu_M)$ and $(N,\omega_N,\mu_N)$ be quasi-Hamiltonian $G$-spaces 
  of the same dimension, 
  and let $\varphi \colon M \to N$ be a $G$-equivariant morphism. 
  If $\varphi^*\omega_N = \omega_M$ and $\mu_N \circ \varphi = \kappa \mu_M$ for some constant central element $\kappa \in Z(G)$, 
  then $\varphi$ is \'etale.
\end{lemma}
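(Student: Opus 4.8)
The plan is to verify that $\varphi$ is \'etale pointwise. Since quasi-Hamiltonian $G$-spaces are by definition smooth and $M$, $N$ have the same dimension, a morphism $\varphi \colon M \to N$ is \'etale if and only if the differential $d\varphi_p \colon T_p M \to T_{\varphi(p)} N$ is an isomorphism for every $p \in M$; by the equality of dimensions this is in turn equivalent to $d\varphi_p$ being injective. Thus it suffices to show $\Ker(d\varphi_p) = \{0\}$ for all $p$, and the whole argument reduces to linear algebra in the tangent space at a single point together with the non-degeneracy axiom (QH3).

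First I would take a tangent vector $v \in \Ker(d\varphi_p)$ and show that $v$ lies simultaneously in $\Ker(\omega_M)_p$ and in $\Ker(d\mu_M)_p$. For the former, the hypothesis $\varphi^*\omega_N = \omega_M$ gives $\omega_M(v,w) = \omega_N(d\varphi_p(v),d\varphi_p(w)) = 0$ for every $w \in T_p M$, since $d\varphi_p(v)=0$; hence $v \in \Ker(\omega_M)_p$. For the latter, I would differentiate the moment-map relation $\mu_N \circ \varphi = \kappa \mu_M$. Writing $\kappa\mu_M$ as the composite $L_\kappa \circ \mu_M$ of $\mu_M$ with left translation by the central element $\kappa$, and using that $L_\kappa$ is a diffeomorphism of $G$ so that $(dL_\kappa)$ is a linear isomorphism at every point, the chain rule yields $(d\mu_N)_{\varphi(p)} \circ d\varphi_p = (dL_\kappa)_{\mu_M(p)} \circ (d\mu_M)_p$. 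Applying both sides to $v$ and using $d\varphi_p(v)=0$ forces $(dL_\kappa)_{\mu_M(p)}\bigl((d\mu_M)_p(v)\bigr)=0$, whence $(d\mu_M)_p(v)=0$ by invertibility, so $v \in \Ker(d\mu_M)_p$.

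Finally, axiom (QH3) for $M$ asserts precisely $\Ker(\omega_M)_p \cap \Ker(d\mu_M)_p = \{0\}$, so $v=0$. This gives injectivity of $d\varphi_p$, and hence, by equality of dimensions, that $d\varphi_p$ is an isomorphism for every $p$, i.e.\ $\varphi$ is \'etale. Since the argument is essentially pointwise linear algebra, I do not anticipate a serious obstacle; the only point requiring care is the role of $\kappa$, where one must observe that left translation by a central element is a diffeomorphism of $G$, so that replacing $\mu_N \circ \varphi$ by $\kappa\mu_M$ leaves the kernel of the relevant differential unchanged. The central element (rather than the identity) is needed precisely because in the intended application, as in \prpref{prp:enrichedconj}, the two moment maps match only up to such a translation.
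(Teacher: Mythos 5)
Your proof is correct and is essentially the paper's own argument: take $v \in \Ker d\varphi_p$, use $\varphi^*\omega_N = \omega_M$ to get $v \in \Ker(\omega_M)_p$, use the moment-map relation to get $v \in \Ker(d\mu_M)_p$, and conclude $v=0$ by (QH3), hence $d\varphi_p$ is an isomorphism by equality of dimensions. Your treatment of $\kappa$ via the differential of left translation is just a rephrasing of the paper's computation $\mu_M(p)^{-1}d\mu_M(v) = \mu_N(\varphi(p))^{-1}d\mu_N(d\varphi(v))$, which absorbs the constant central factor into the Maurer--Cartan form.
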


\begin{proof}
  Take $p \in M$ and $v \in T_p M$ with $d\varphi(v)=0$. 
  Then for any $w \in T_p M$, we have 
  \[
    (\omega_M)_p(v,w) = (\omega_N)_{\varphi(p)}(d\varphi(v),d\varphi(w)) = 0,
  \]
  i.e., $v \in \Ker (\omega_M)_p$. Furthermore, we have 
  \[
    \mu_M(p)^{-1}d\mu_M(v) = 
    \mu_N(\varphi(p))^{-1}d\mu_N (d\varphi (v)) = 0.
  \]
  Hence $v \in \Ker (\omega_M)_p \cap \Ker d\mu_M$, which is zero by (QH3). 
  Thus $d\varphi$ is injective, and is an isomorphism since $\dim M = \dim N$.
\end{proof}

Thus for $h_0, t \in H$ with $Z_G(h_0) \subset Z_G(t)=H$, 
the map 
\[ 
  \FS{G}{H}^1 \spqa{t^{-1}\calC_0^{-1}} H \to \frkC_0 \circledast_G \frkC, 
  \quad [C,h,u,v] \mapsto (C^{-1}ht^{-1}vC,C^{-1}v^{-1}tuvC)
\]
is a $G$-equivariant open immersion  
intertwining the quasi-Hamiltonian $G$-structures.

\section{Unfolding: General case}\label{sec:unfolding}

In this section we relate the space $\calA(Q)$ associated to an irregular type $Q$ 
with a product of conjugacy classes in $G$.

Consider the multi-fission space $\calA$ 
associated to an increasing sequence of parabolic subgroups 
$P_1 \subset P_2 \subset \cdots \subset P_r$, $r>1$. 
Let $U_j^\pm, H_j$, $j=1,2, \dots ,r$ be as before and 
put 
\[
  \calA' = G \times H_1 \times \prod_{j=1}^{r-1} (U_j^+ \times U_j^-),
\]
which is the multi-fission space associated to 
$P_1 \subset P_2 \subset \cdots \subset P_{r-1}$. 
Take $t \in T$ so that $Z_G(t)^0=H_r$ 
(in particular, $t$ lies in the center of $H_r$), 
and let $\frkC$ be the conjugacy class of $t$.
Then we construct a $G \times H_1$-equivariant morphism  
\[
  \Upsilon \colon \calA \to \calA' \circledast_G \frkC
\]
as follows. 
For $(C,h,u_1, \dots ,u_{2r}) \in \calA$, 
put 
\begin{align}
  k &= ht^{-1}, \\
  v_i &= t u_i t^{-1} \quad (i=1,2, \dots ,2r-3), \\
  v_{2r-2} &= t u_{2r-2} t^{-1} u_{2r}, \\
  M &= C^{-1} u_{2r}^{-1} t u_{2r-1} u_{2r} C.
\end{align}
Then 
\begin{align}
  C^{-1}h u_1 \cdots u_{2r} C 
  &= C^{-1}h t^{-1} v_1 \cdots v_{2r-3} t u_{2r-2} u_{2r-1} u_{2r} C \\
  &= C^{-1}k v_1 \cdots v_{2r-3} v_{2r-2} u_{2r}^{-1} t u_{2r-1} u_{2r} C \\
  &= C^{-1}k v_1 \cdots v_{2r-3} v_{2r-2} C M.
\end{align}
\lmmref{lmm:conj-unip} implies that $M$ lies in $\frkC$, so that 
we obtain a map 
\[
  \Upsilon_t \colon \calA \to \calA' \circledast_G \frkC, \quad 
  (C,h,u_1, \dots ,u_{2r}) \mapsto 
  (C,k,v_1, \dots ,v_{2r-2},M).
\]
Let $\mu \colon \calA' \circledast_G \frkC \to G \times H_1$ be the moment map.

\begin{lemma}\label{lmm:unfolding-induction}
  The map $\Upsilon_t$ is a $G \times H_1$-equivariant \'etale morphism 
  (an open immersion if $Z_G(t)=H_r$). 
  It intertwines the quasi-Hamiltonian two-forms and 
  \[
    \mu(\Upsilon_t(C,h,u_1, \dots ,u_{2r})) = (C^{-1}h u_1 \cdots u_{2r} C, t h^{-1}).
  \]  
\end{lemma}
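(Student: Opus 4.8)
The plan is to mimic the strategy of Theorem~\ref{thm:unfolding1} from the Poincar\'e rank $1$ case, factoring $\Upsilon_t$ into an elementary reindexing isomorphism followed by an application of the \'etale chart $\tau$ from Proposition~\ref{prp:conj-tri}. The key structural observation is that the multi-fission space $\calA$ differs from $\calA'$ only in the outermost layer (the factor corresponding to $P_r$), so the unfolding should act trivially on all the inner data and affect only the top pair $(u_{2r-1},u_{2r}) \in U_r^+ \times U_r^-$, extracting from it a conjugacy-class element $M \in \frkC$. Since $t \in T$ centralizes $H_r$ and satisfies $Z_G(t)^0 = H_r$, Lemma~\ref{lmm:conj-unip} applies with $(H,U^+,U^-)$ replaced by $(H_r, U_r^+, U_r^-)$, which is exactly what guarantees $M \in \frkC$ as already checked in the computation preceding the statement.

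First I would verify $G \times H_1$-equivariance directly from the defining formulas for $k, v_i, M$: the $H_1$-action conjugates everything by $k \in H_1 \subset H_r$ (so $t$ is fixed and all the $v_i$, $M$ transform correctly), while the $G$-action only touches $C$ and hence only $M$, via $C \mapsto Cg^{-1}$, matching the fusion-product $G$-action on $\frkC$. Next I would establish the moment map identity: by construction $b = h u_1 \cdots u_{2r}$ and the computation displayed before the lemma shows $C^{-1} b C = C^{-1} k v_1 \cdots v_{2r-2} C \cdot M$, so the first component of $\mu \circ \Upsilon_t$ is $C^{-1} h u_1 \cdots u_{2r} C$; the second component is $t h^{-1} = k^{-1}$ read off from the moment map of $\calA'$ tensored against $\frkC$. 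This is essentially bookkeeping once the product rearrangement is granted.

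The main work, and the expected obstacle, is proving that $\Upsilon_t$ is \'etale and intertwines the two-forms. Following the rank $1$ proof, I would factor $\Upsilon_t$ through an intermediate space analogous to $G \times P^- \times U^+$: the reindexing $(C,h,u_1,\dots,u_{2r}) \mapsto (C,k,v_1,\dots,v_{2r-2},u_{2r-1},u_{2r})$ is an isomorphism (each step $u_i \mapsto t u_i t^{-1}$ is an automorphism of the relevant unipotent group since $t$ normalizes each $U_j^\pm$, and the definition of $v_{2r-2}$ absorbing $u_{2r}$ is invertible), after which the passage to $M = C^{-1} u_{2r}^{-1} t u_{2r-1} u_{2r} C$ is precisely the $U_r^-$-equivariant map $\varphi$ of Theorem~\ref{thm:unfolding1} applied inside the top layer. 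The \'etale property and the open-immersion refinement when $Z_G(t)=H_r$ then transfer from Proposition~\ref{prp:conj-tri}. For the two-form, rather than computing the full pullback of \eqref{eq:form} by hand, I would invoke the gluing decomposition of Proposition~\ref{prp:triangular}/\ref{prp:multi}: writing $\calA \simeq \calA' \glue{H_r} \calA(r)$ localizes the entire comparison to the fission factor $\calA(r) = \FS{H_{r}{+}1}{H_r}^{r}$ \emph{wait}---more precisely the outermost glued piece---so that the two-form computation reduces to the already-established rank $1$ identity of Proposition~\ref{prp:conj-tri}, with the inner factors contributing identically on both sides because $\Upsilon_t$ fixes them. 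The delicate point will be checking that the fusion correction terms $-\tfrac12(\mu_1^{-1}d\mu_1, d\mu_2\,\mu_2^{-1})$ introduced when gluing $\frkC$ to $\calA'$ match the cross terms produced by the substitution $v_{2r-2} = t u_{2r-2} t^{-1} u_{2r}$; here I expect the bilinear-form orthogonality $(\frku_r, \frkh_r + \frku_r) = 0$ to be exactly what makes the unwanted terms cancel, as it did in Proposition~\ref{prp:conj-tri}.
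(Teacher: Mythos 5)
Your handling of equivariance, the moment-map identity, and the \'etale property is correct and is essentially the paper's route: the paper likewise treats the moment map as already verified by the computation preceding the statement, and obtains \'etaleness (and the open-immersion refinement when $Z_G(t)=H_r$) directly from \prpref{prp:conj-tri}; your factorization through the reindexing isomorphism and a $\varphi$-type map is the implicit content of that citation.

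The gap is in your plan for the two-form, which is the real substance of the lemma. The decomposition you invoke, $\calA \simeq \calA' \glue{H_r} \calA(r)$, is not the one provided by \prpref{prp:multi}: there the piece to the left of the last gluing is $\calA(1) \glue{H_2} \cdots \glue{H_{r-1}} \calA(r-1)$, a multi-fission space with ambient group $H_r$ built from $V_j^\pm = U_j^\pm \cap H_{j+1}$, and this is \emph{not} $\calA'$, whose ambient group is $G$ and whose unipotent factors are the full $U_j^\pm$. Relatedly, $\Upsilon_t$ does not ``act trivially on all the inner data'': the target $\calA' \circledast_G \frkC$ has outermost glued piece $\FS{G}{H_{r-1}}^{r-1}$, so passing from $\calA$ to $\calA' \circledast_G \frkC$ merges $\FS{H_r}{H_{r-1}}^{r-1} \glue{H_r} \FS{G}{H_r}^r$ into $\FS{G}{H_{r-1}}^{r-1} \circledast_G \frkC$, changing the ambient groups of the inner layers, and the substitution $v_{2r-2} = t u_{2r-2} t^{-1} u_{2r}$ genuinely mixes layers $r-1$ and $r$. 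Moreover, even granting a localization, the outermost piece $\calA(r) = \FS{G}{H_r}^r$ carries $r$ pairs $(U_r^+ \times U_r^-)$, so the comparison is not an instance of the rank-one identity of \thmref{thm:unfolding1}. What the paper actually does is: set $\Omega = \Upsilon_t^*\omega' - \omega$, observe that since the $G$-components of the two moment maps agree, axiom (QH2) forces $\Omega(\xi_\calA,\bcdot)=0$ for all $\xi \in \frkg$, so $\Omega$ descends and it suffices to check it vanishes on the slice $\{C=1\}$; then compute $2\omega|_{C=1}$ term by term from \eqref{eq:form} in the variables $k$, $D_j$, $u=u_{2r-1}$, $v=u_{2r}$, $M$, using orthogonality of the bilinear form (e.g.\ $D_{2r-3}^{-1}dD_{2r-3}$ is valued in $\Lie U_{r-1}^-$ and pairs to zero with $v^{-1}dv$) to kill cross terms, recognize $2\omega_\frkC$ via \prpref{prp:conj-tri} applied to $M = v^{-1}tuv$, and match the leftover $-((kD_0)^{-1}d(kD_0),dM\,M^{-1})$ with the fusion correction term. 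Your orthogonality instinct points at the right cancellations, but no gluing decomposition substitutes for this computation; the reduction-to-slice step via the matching $G$-moment maps is the key idea your proposal is missing.
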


\begin{proof}
  By \prpref{prp:conj-tri}, the map $\Upsilon_t$ is \'etale, and is an open immersion if $Z_G(t)=H_r$.
  Also, the formula for the moment map is already verified above. 
  Therefore it remains to show that $\Upsilon_t$ intertwines the quasi-Hamiltonian two-forms.
  Let $\omega, \omega'$ be the quasi-Hamiltonian two-forms on $\calA, \calA' \circledast_G \frkC$, respectively 
  and put $\Omega = \Upsilon_t^*\omega' - \omega$. 
  Since the $G$-factors of the moment maps for $\calA$ and $\calA' \circledast_G \frkC$ match up via $\Upsilon_t$, 
  we have 
  \[
    \Omega(v_\xi,\bcdot) = 0, \quad \xi \in \frkg.
  \]
  Hence $\Omega$ descends to a two-form on the quotient 
  \[
    \calA /G \simeq H_1 \times \prod_{j=1}^r (U_j^+ \times U_j^-),
  \]
  and it suffices to show that this two-form is zero, i.e., 
  $\Omega |_{C=1} = 0$. By the definition, we have
  \begin{equation}
    2\omega|_{C=1} = (dC_0\,C_0^{-1},h^{-1}dh) - \sum_{j=0}^{2r-1} (C_j^{-1}dC_j,C_{j+1}^{-1}dC_{j+1}),
  \end{equation}
  where $C_j = u_{j+1} u_{j+2} \cdots u_{2r}$.  
  Put $D_j = v_{j+1} v_{j+2} \cdots v_{2r-2}$ for $j=0,1, \dots ,2r-2$ and $u=u_{2r-1}$, $v=u_{2r}$. 
  Then 
  \[
    C_j = t^{-1}D_j M \ (j=0,1, \dots ,2r-3), \quad C_{2r-2} = uv =t^{-1}vM, \quad C_{2r-1} = v.
  \]
  We have 
  \begin{align}
    (dC_0\,C_0^{-1},h^{-1}dh) 
    &= (\Ad_t^{-1}(dD_0\,D_0^{-1}) + \Ad_{t^{-1}D_0}(dM\,M^{-1}),h^{-1}dh) \\
    &= (dD_0\,D_0^{-1},k^{-1}dk) + (dM\,M^{-1},\Ad_{D_0}^{-1}(k^{-1}dk)).
  \end{align}
  For $j=0,1, \dots ,2r-4$, we have 
  \begin{align}
    (C_j^{-1}dC_j,C_{j+1}^{-1}dC_{j+1})
    &= (M^{-1}dM + \Ad_M^{-1}(D_j^{-1}dD_j), M^{-1}dM + \Ad_M^{-1}(D_{j+1}^{-1}dD_{j+1})) \\ 
    &= (D_j^{-1}dD_j - D_{j+1}^{-1}dD_{j+1}, dM\,M^{-1}) + (D_j^{-1}dD_j,D_{j+1}^{-1}dD_{j+1}).     
  \end{align}
  Noting that $D_{2r-3}^{-1}dD_{2r-3}$ takes values in the Lie algebra of $U_{r-1}^-$, we also have 
  \begin{align}
    (C_{2r-3}^{-1}dC_{2r-3},C_{2r-2}^{-1}dC_{2r-2}) 
    &= (M^{-1}dM + \Ad_M^{-1}(D_{2r-3}^{-1}dD_{2r-3}), M^{-1}dM + \Ad_M^{-1}(v^{-1}dv)) \\
    &= (dM\,M^{-1}, v^{-1}dv) + (D_{2r-3}^{-1}dD_{2r-3}, v^{-1}dv + dM\,M^{-1}) \\
    &= (dM\,M^{-1}, v^{-1}dv) + (D_{2r-3}^{-1}dD_{2r-3}, dM\,M^{-1}),
  \end{align}
  and 
  \[
    (C_{2r-2}^{-1}dC_{2r-2},C_{2r-1}^{-1}dC_{2r-1})
    = (v^{-1}dv + \Ad_v^{-1}(u^{-1}du), v^{-1}dv) = (u^{-1}du, dv\,v^{-1}).
  \]
  Thus we obtain 
  \begin{align}
    2\omega|_{C=1} 
    &= (dD_0\,D_0^{-1},k^{-1}dk) - \sum_{j=0}^{2r-4} (D_j^{-1}dD_j,D_{j+1}^{-1}dD_{j+1}) \\ 
    &\quad + (dM\,M^{-1},\Ad_{D_0}^{-1}(k^{-1}dk)) - (D_0^{-1}dD_0, dM\,M^{-1}) \\
    &\quad - (dM\,M^{-1}, v^{-1}dv) - (u^{-1}du, dv\,v^{-1}).
  \end{align}
  The first line on the right-hand side is equal to $2\omega_{\calA'}$, 
  where $\omega_{\calA'}$ is the quasi-Hamiltonian two-form on $\calA'$ pulled back to 
  $H_1 \times \prod_{j=1}^r (U_j^+ \times U_j^-)$. 
  For the third line, note that $M= v^{-1}tuv$ on $H_1 \times \prod_{j=1}^r (U_j^+ \times U_j^-)$, so that 
  \[
    dM\,M^{-1} = -v^{-1}dv + \Ad_{v^{-1}t}(du\,u^{-1}) + \Ad_M(v^{-1}dv),
  \]
  and hence 
  \[
    - (dM\,M^{-1}, v^{-1}dv) - (u^{-1}du, dv\,v^{-1}) = (dv\,v^{-1}, \Ad_t(du\,u^{-1}) + \Ad_{tu}(dv\,v^{-1}) + u^{-1}du).
  \]
  By \prpref{prp:conj-tri}, this is equal to $2\omega_{\frkC}$, 
  where $\omega_{\frkC}$ is the pullback of the quasi-Hamiltonian two-form on $\frkC$. 
  Thus we obtain 
  \begin{align}
    2\omega|_{C=1} &= 2\omega_{\calA'} + 2\omega_{\frkC} - (\Ad_{D_0}^{-1}(k^{-1}dk) + D_0^{-1}dD_0, dM\,M^{-1}) \\
    &= 2\omega_{\calA'} + 2\omega_{\frkC} - ((kD_0)^{-1}d(kD_0), dM\,M^{-1}) = 2\Upsilon_t^*\omega'|_{C=1}
  \end{align}
  by the definition of the fusion product. 
  Hence the map $\Upsilon_t$ intertwines the quasi-Hamiltonian two-forms.
\end{proof}

If $r>2$, then we can apply \lmmref{lmm:unfolding-induction} to $\calA'$.
Repeating this argument and compositing the resulting morphisms, 
we obtain an \'etale morphism 
\[
  \calA \to \FS{G}{H_1}^1 \circledast_G \frkC_2 \circledast_G \cdots \circledast_G \frkC_r, 
\]
where $\FS{G}{H_1}^1 = G \times H_1 \times U_1^+ \times U_1^-$ and 
each $\frkC_i$ is the $G$-conjugacy class of an element $t_i \in T$ such that $Z_G(t_i)^0=H_i$. 
Finally, if we further take an element $t_1 \in T$ so that 
$Z_G(t_1)^0=H_1$,
then by \thmref{thm:unfolding1}, we obtain an \'etale morphism $\FS{G}{H_1}^1 \to \bbM \circledast_G \frkC_1$, 
where $\bbM$ is the quasi-Hamiltonian $G \times H_1$-space introduced in the previous section and 
$\frkC_1$ is the conjugacy class of $t_1$.
Composing these morphisms, we obtain the following theorem.

\begin{theorem}\label{thm:unfolding}
  For each $i=1,2, \dots ,r$, take $t_i \in T$ so that $Z_G(t_i)^0=H_i$
  and let $\frkC_i$ be its $G$-conjugacy class. 
  Also, let $\bbM = (G \times H_1 U_1^-)/U_1^-$ be the quasi-Hamiltonian $G \times H_1$-space introduced in the previous section. 
  Then there exists a(n explicit) $G \times H_1$-equivariant \'etale morphism
  \[
    \Upsilon_{t_1,\dots ,t_r} \colon \calA \to \bbM \circledast_G \frkC_1 \circledast_G \cdots \circledast_G \frkC_r, 
  \]
  satisfying the following conditions:
  \begin{enumerate}
    \item it is an open immersion if $Z_G(t_i)=H_i$ for all $i=1,2, \dots ,r$;
    \item it intertwines the quasi-Hamiltonian two-forms;
    \item the pullback of the moment map $\mu \colon \bbM \circledast_G \frkC_1 \circledast_G \cdots \circledast_G \frkC_r \to G \times H_1$ is given by  
    \[
      \mu(\Upsilon_{t_1,\dots ,t_r}(C,h,u_1,u_2, \dots ,u_{2r})) = (C^{-1}hu_1 u_2 \cdots u_{2r}C,t_1 t_2 \cdots t_r h^{-1}).  
    \]
  \end{enumerate}
\end{theorem}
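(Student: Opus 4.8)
The plan is to assemble $\Upsilon_{t_1,\dots,t_r}$ as the composite of the morphisms already constructed and to check that each of the three conclusions survives this composition, arguing by finite induction on $r$. For $r=1$ the space $\calA$ is the fission space $\FS{G}{H_1}^1$, and \thmref{thm:unfolding1} supplies the map $\Upsilon_{t_1}\colon \FS{G}{H_1}^1 \to \bbM \circledast_G \frkC_1$ together with all three conclusions verbatim. For the inductive step, \lmmref{lmm:unfolding-induction} produces a $G\times H_1$-equivariant étale morphism $\Upsilon_{t_r}\colon \calA \to \calA' \circledast_G \frkC_r$, an open immersion when $Z_G(t_r)=H_r$, intertwining the quasi-Hamiltonian two-forms, whose pullback of the moment map is $(C^{-1}hu_1\cdots u_{2r}C,\,t_r h^{-1})$; here $\calA'$ is the multi-fission space for $P_1\subset\cdots\subset P_{r-1}$, and the relevant groups $P_1,H_1,U_1^-$ (hence $\bbM$) are unchanged. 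Applying the inductive hypothesis to $\calA'$ yields $\Upsilon'\colon \calA' \to \bbM\circledast_G\frkC_1\circledast_G\cdots\circledast_G\frkC_{r-1}$ with the analogous properties.

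The key gluing step is to fuse $\Upsilon'$ with the identity on $\frkC_r$ to obtain
\[
  \Upsilon' \circledast_G \id_{\frkC_r}\colon \calA'\circledast_G\frkC_r \to \bbM\circledast_G\frkC_1\circledast_G\cdots\circledast_G\frkC_r,
\]
and then to compose with $\Upsilon_{t_r}$. I would record here a short functoriality statement for fusion: if a $G\times H_1$-equivariant morphism intertwines the two-forms and preserves the $G$-component of the moment map \emph{on the nose}, then fusing it with a fixed $G$-space $K$ gives a morphism that is étale (resp.\ an open immersion) precisely when the original is, and still intertwines the fused two-forms and moment maps. This is immediate from the fusion formulas $\omega_{\mathrm{fus}} = \omega - \tfrac12(\mu_1^{-1}d\mu_1,d\mu_2\,\mu_2^{-1})$ and $\mu_{\mathrm{fus}} = \mu_1\cdot\mu_2$: the underlying map of varieties is $\Upsilon'\times\id_K$, so étaleness and openness are unchanged, and since the correction term involves only the $G$-moment of $\calA'$ (which $\Upsilon'$ preserves exactly) and the moment map of $K$ (untouched), the pullback of $\omega_{\mathrm{fus}}$ matches.

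Granting this, conclusion (1) follows because étale morphisms (resp.\ open immersions) compose, so the composite is an open immersion precisely when $Z_G(t_i)=H_i$ for every $i$; conclusion (2) follows because both factors intertwine the relevant (fused) two-forms. For the moment map (3) I would track the $H_1$-component through the recursion: writing $k=ht_r^{-1}$ for the element fed into $\calA'$ and using that each $t_i\in T$ centralizes $H_r\supseteq H_1$ (so the factors commute with $h$ and with one another), the inductive $H_1$-moment $t_1\cdots t_{r-1}k^{-1}$ becomes $t_1\cdots t_{r-1}t_r h^{-1}$, while the $G$-component is preserved at every stage as $C^{-1}hu_1\cdots u_{2r}C$; fusion with $\frkC_r$ leaves the $H_1$-component alone, yielding the claimed formula. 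The only point needing genuine care is the functoriality bookkeeping above—ensuring the $G$-moment is preserved exactly, not merely up to a central factor, so that the cross-term in $\omega_{\mathrm{fus}}$ pulls back correctly; everything else is formal composition.
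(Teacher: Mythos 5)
Your proposal is correct and follows essentially the same route as the paper: the paper also obtains $\Upsilon_{t_1,\dots,t_r}$ by iterating \lmmref{lmm:unfolding-induction} to peel off $\frkC_r,\frkC_{r-1},\dots,\frkC_2$ and then composing with the map of \thmref{thm:unfolding1} in the base case $\FS{G}{H_1}^1 \to \bbM \circledast_G \frkC_1$. The only difference is presentational: you make explicit the fusion-functoriality bookkeeping (that fusing with $\id_{\frkC_r}$ preserves \'etaleness, open immersions, and the two-forms because the cross-term $-\tfrac12(\mu_1^{-1}d\mu_1, d\mu_2\,\mu_2^{-1})$ involves only the exactly-preserved $G$-moment), which the paper leaves implicit in the phrase ``composing these morphisms.''
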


We call $\Upsilon_{t_1,\dots ,t_r}$ 
the \emph{unfolding map} for $\calA$ associated to $(t_1,\dots ,t_r)$. 
It is explicitly described as follows:
For $(C,h,u_1^+,u_1^-, \dots ,u_r^+,u_r^-) \in \calA$  
(so that $u_j^+ \in U_j^+$, $u_j^- \in U_j^-$), 
put 
\[
  v_i^\pm = (t_{i+1}t_{i+2} \cdots t_r) u_i^\pm (t_{i+1}t_{i+2} \cdots t_r)^{-1} 
  \quad (i=1,2, \dots ,r),
\]
and define 
\[
  M_i = C^{-1}(v_i^- v_{i+1}^- \cdots v_r^-)^{-1} t_i v_i^+ (v_i^- v_{i+1}^- \cdots v_r^-)C \quad (i=1,2, \dots ,r).
\]
Then 
\[
  \Upsilon_{t_1,\dots ,t_r}(C,h,u_1^+,u_1^-, \dots ,u_r^+,u_r^-) 
  = ([C,h t_r^{-1} \cdots t_1^{-1} v_1^- v_2^- \cdots v_r^-],M_1,M_2, \dots ,M_r).
\]

Together with \prpref{prp:enrichedconj}, \thmref{thm:unfolding} implies the following.

\begin{corollary}\label{crl:unfolding}
  Let $\calC$ be an $H_1$-conjugacy class and  
  take $t_1, t_2, \dots ,t_r \in T$ satisfying the following conditions: 
  \begin{enumerate}
    \item $Z_G(t_i)=H_i$ for all $i=1,2, \dots ,r$;
    \item $Z_G(h (t_1 t_2 \cdots t_r)^{-1}) \subset H_1 \quad (h \in \calC)$.
  \end{enumerate}
  Let $\frkC_0$ be the $G$-conjugacy class containing  
  $\calC_0 \coloneqq \calC t_r^{-1} \cdots t_1^{-1}$ 
  and for $i=1,2, \dots ,r$, let $\frkC_i$ be the $G$-conjugacy class of $t_i$.
  Then the unfolding map $\Upsilon_{t_1,\dots ,t_r}$ induces a $G$-equivariant 
  morphism   
  \[
    \calA \spqa{\ov{\calC}^{-1}} H_1 \to \ov{\frkC_0} \circledast_G \frkC_1 \circledast_G \cdots \circledast_G \frkC_r
  \]
  intertwining the Hamiltonian quasi-Poisson $G$-structures. 
  Furthermore it restricts to an open immersion 
  $\calA \spqa{\calC^{-1}} H_1 \to \frkC_0 \circledast_G \frkC_1 \circledast_G \cdots \circledast_G \frkC_r$ 
  intertwining the quasi-Hamiltonian $G$-structures. 
\end{corollary}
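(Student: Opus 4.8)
The plan is to deduce the corollary from \thmref{thm:unfolding} by performing an $H_1$-reduction on both sides of the unfolding map $\Upsilon = \Upsilon_{t_1,\dots,t_r}$ and then identifying the reduced target with the asserted fusion product via \prpref{prp:enrichedconj}.

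First I would record the bookkeeping of moment maps. By \thmref{thm:unfolding}(3), the $H_1$-component of the moment map on $\bbM \circledast_G \frkC_1 \circledast_G \cdots \circledast_G \frkC_r$ pulls back along $\Upsilon$ to $t_1 t_2 \cdots t_r\, h^{-1}$, whereas the $H_1$-moment map of $\calA$ is $h^{-1}$. Hypothesis (1) gives $Z_G(t_i) = H_i \supseteq H_1$, and since $t_i \in T \subset H_1$, each $t_i$—hence $s \coloneqq t_1 t_2 \cdots t_r$—lies in $Z(H_1)$; thus the two $H_1$-moment maps differ only by left translation by the central element $s$. Consequently $\Upsilon$ carries $\mu_{H_1}^{-1}(\ov{\calC}^{-1})$ into the locus where the target $H_1$-moment map takes values in $s\,\ov{\calC}^{-1} = \ov{s\calC^{-1}} = \ov{\calC_0}^{-1}$, using $\calC_0 = \calC s^{-1}$ and the centrality of $s$. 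Since $\Upsilon$ is \'etale and the moment maps agree up to this fixed translation, the induced map $\mu_{H_1}^{-1}(\ov{\calC}^{-1}) \to \mu_{H_1}^{-1}(\ov{\calC_0}^{-1})$ is again \'etale (and, under hypothesis (1), an open immersion by \thmref{thm:unfolding}(1)), and likewise with $\calC,\calC_0$ in place of their closures.

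Next I would pass to $H_1$-quotients. By \prpref{prp:reduction1}(2) each side acquires a reduced Hamiltonian quasi-Poisson $G$-structure, and the $G \times H_1$-equivariant, two-form--preserving morphism above descends to a $G$-equivariant morphism of reductions
\[
  \calA \spqa{\ov{\calC}^{-1}} H_1 \longrightarrow \bigl(\bbM \circledast_G \frkC_1 \circledast_G \cdots \circledast_G \frkC_r\bigr) \spqa{\ov{\calC_0}^{-1}} H_1
\]
intertwining the quasi-Poisson structures, which over the stable loci is an open immersion of geometric quotients. Because $H_1$ acts only on the $\bbM$-factor, reduction by $H_1$ commutes with fusion by $G$, so the target is canonically $\bigl(\bbM \spqa{\ov{\calC_0}^{-1}} H_1\bigr) \circledast_G \frkC_1 \circledast_G \cdots \circledast_G \frkC_r$. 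Finally I would invoke \prpref{prp:enrichedconj}: hypothesis (2) gives $Z_G(h_0) \subset H_1$ for $h_0 = h s^{-1} \in \calC_0$, so the moment map identifies $\bbM \spqa{\calC_0^{-1}} H_1 \cong \wh{\calC_0}$ with $\frkC_0$ as quasi-Hamiltonian $G$-spaces, and by \lmmref{lmm:conj-unip} together with continuity the closure version yields a $G$-equivariant Hamiltonian quasi-Poisson morphism $\bbM \spqa{\ov{\calC_0}^{-1}} H_1 \to \ov{\frkC_0}$. Fusing this with the identity on $\frkC_1,\dots,\frkC_r$ and composing with the descended map produces the asserted morphism, which restricts to an open immersion on the stable locus.

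The main obstacle I anticipate is the interchange of reduction and fusion combined with the careful treatment of the closure case: on the conjugacy-class locus everything is an (open) immersion between smooth geometric quotients, but along the closures one works with good quotients, where the descended map need only be a Poisson morphism and the identification with $\ov{\frkC_0}$ must be obtained by extending the open-dense isomorphism of \prpref{prp:enrichedconj} by continuity rather than constructed directly.
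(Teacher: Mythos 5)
Your proposal is correct and follows essentially the same route as the paper, which derives the corollary directly from \thmref{thm:unfolding} together with \prpref{prp:enrichedconj} by performing the $H_1$-reduction (with the moment maps differing by the central translation $t_1\cdots t_r$, so that $\ov{\calC}^{-1}$ corresponds to $\ov{\calC_0}^{-1}$) and identifying $\bbM \spqa{\calC_0^{-1}} H_1 \simeq \wh{\calC}_0$ with $\frkC_0$ via the moment map. Your extra care about commuting reduction with fusion and extending to the closures by density/continuity fills in details the paper leaves implicit, and is consistent with its intended argument.
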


For any $H_1$-conjugacy class $\calC$, 
there exist $t_1, t_2, \dots ,t_r \in T$ satisfying the above two conditions; 
see \appref{app:parameter}.

\begin{remark}\label{rmk:unfolding-additive}
  We intend to apply the above result in the following situation.
  Take an irregular type  
  $Q(z)=\sum_{j=1}^r Q_j z^{-j} \in z^{-1}\frkt[z^{-1}]$ 
  of pole order $r$ and  
  a non-resonant\footnote{$X \in \frkh$ is said to be non-resonant 
  if $\ad_X \in \End_\C(\frkh)$ does not have nonzero integral eigenvalues; 
  any non-resonant element $X$ satisfies $Z_G(X)=Z_G(\exp(2\pi \sqrt{-1}X))^0$.} 
  element $\Lambda_0$ of $\frkh = \Lie Z_G(Q)$. 
  Then the associated \emph{normal form} is 
  \[
  \Lambda = \sum_{j=0}^r \Lambda_j z^{-j-1}dz = dQ + \Lambda_0 z^{-1}dz.
  \]
  Take $\epsilon = (\epsilon_0,\epsilon_1, \dots ,\epsilon_r) \in \C^{r+1}$ 
  with $\epsilon_i \neq \epsilon_j$ ($i \neq j$). 
  Following \cite{Hir24}, define the \emph{unfolding} of $\Lambda$ to be 
  \[
  \wh{\Lambda} = \sum_{j=0}^r \frac{\Lambda_j\,dz}{(z-\epsilon_0)(z-\epsilon_1) \cdots (z-\epsilon_j)},
  \] 
  which is a $\frkh$-valued logarithmic one-form on the affine line $\C$ with poles on $\{ \epsilon_0, \dots ,\epsilon_r \}$.
  Put  
  \[
  \wh{\Lambda}_i = \res_{z=\epsilon_i} \wh{\Lambda} 
  = \sum_{j=i}^r \Lambda_j \prod_{\sumfrac{0 \leq l \leq j}{l \neq i}} (\epsilon_i - \epsilon_l)^{-1}.
  \]
  Observe that each $\wh{\Lambda}_i$ is a linear combination of $\Lambda_i, \Lambda_{i+1}, \dots , \Lambda_r$ 
  and 
  \[
  \sum_{i=0}^r \wh{\Lambda}_i = -\res_{z=\infty} \wh{\Lambda} = \Lambda_0.
  \] 
  We assume that $\epsilon$ satisfies the following conditions:
  \begin{enumerate}
    \item $Z_G(e^{2\pi\sqrt{-1}\wh{\Lambda}_i})=Z_G(\Lambda_i,\Lambda_{i+1}, \dots ,\Lambda_r)$ ($i=1,2, \dots ,r$);
    \item $Z_G(e^{2\pi\sqrt{-1}\wh{\Lambda}_0}) \subset Z_G(Q)=Z_G(\Lambda_1,\dots ,\Lambda_r)$. 
  \end{enumerate}
  Let $\calC$ be the $Z_G(Q)$-conjugacy class of $e^{2\pi\sqrt{-1}\wh{\Lambda}_0}$ 
  and put $t_i = e^{2\pi\sqrt{-1}\wh{\Lambda}_i}$ for $i=1,2, \dots ,r$. 
  Then we may apply the above corollary to $\calA(Q)$ and $\calC$, $(t_1,t_2, \dots ,t_r)$. 
\end{remark}

\begin{remark}
  There is an additive analogue of the unfolding map; see \cite{CRY25}.
\end{remark}

\section{Unfolding of wild character varieties}\label{sec:unfolding-wcv}

Let $\bSig = (\Sigma,\bfa;{}^1 Q, \dots ,{}^m Q)$ be an irregular curve of genus $g$.  
As in \secref{sec:wcv}, each irregular type ${}^i Q = \sum_{j=1}^{r_i} {}^i Q_j z_i^{-j}$ (where $r_i$ is the pole order of ${}^i Q$)
defines an increasing sequence of reductive subgroups 
\[
  Z_G({}^i Q) = {}^i H_1 \subset {}^i H_2 \subset \cdots \subset {}^i H_{r_i}, 
  \quad {}^i H_j = Z_G({}^i Q_j, {}^i Q_{j+1}, \dots ,{}^i Q_{r_i}).
\]
By \prpref{prp:triangular}, for each $i$ there exists an increasing sequence of parabolic subgroups
\[
  {}^i P_1 \subset {}^i P_2 \subset \cdots \subset {}^i P_{r_i} 
\]
with each ${}^i P_j$ containing ${}^i H_j$ as a Levi subgroup such that 
$\calA({}^i Q)$ is isomorphic to the multi-fission space 
\[
  {}^i \calA = G \times Z_G({}^i Q) \times \prod_{j=1}^{r_i} ({}^i U_j^+ \times {}^i U_j^-),
\]
where ${}^i U_j^\pm$ are the unipotent radicals of ${}^i P_j$ and its opposite parabolic subgroup.
Recall that 
for a conjugacy class $\calC = \prod_{i=1}^m \calC_i$ in $\bfH = \prod_{i=1}^m Z_G({}^i Q)$,
the wild character variety $\MB^\st(\bSig,\calC)$ is 
the reduction of the stable locus of 
$\tMB(\bSig,\calC) = \bbD(G)^{\circledast g} \circledast_G \calA_{\calC_1}({}^1 Q) \circledast_G \cdots \circledast_G \calA_{\calC_m}({}^m Q)$
by $G'$.

By \prpref{prp:enrichedconj} and \crlref{crl:unfolding}, 
we obtain the following theorem.

\begin{theorem}\label{thm:unfolding-wcv}
  Let $\calC = \prod_{i=1}^m \calC_i$ be a conjugacy class in $\bfH$.
  For each $i=1,2, \dots ,m$, 
  take ${}^i t_j \in T$, $j=1,2, \dots ,r_i$ satisfying  
  \[
    Z_G({}^i t_j) = {}^i H_j \ (j=1,2,\dots,r_i), \quad 
    Z_G(h ({}^i t_1 {}^i t_2 \cdots {}^i t_{r_i})^{-1}) \subset Z_G({}^i Q) \ (h \in \calC_i),
  \]
  and let ${}^i \frkC_j$ be the $G$-conjugacy class of ${}^i t_j$. 
  Also let ${}^i \frkC_0$ be the $G$-conjugacy class containing  
  $\calC_i ({}^i t_1 {}^i t_2 \cdots {}^i t_{r_i})^{-1}$, 
  and put 
  \[
    {}^i \frkC = {}^i \frkC_0 \circledast_G {}^i \frkC_1 \circledast_G \cdots \circledast_G {}^i \frkC_{r_i} \subset G^{r_i+1}.
  \]
  Then the unfolding maps $\Upsilon_{{}^i t_1, \dots ,{}^i t_{r_i}}$, 
  $i=1,2, \dots ,m$ give rise to a $G$-equivariant morphism 
  \[
    \Upsilon_{({}^i t_j)} \colon \tMB(\bSig,\ov{\calC}) \to \bbD(G)^{\circledast g} \circledast_G \ov{{}^1 \frkC} \circledast_G \cdots \circledast_G \ov{{}^m \frkC}
  \] 
  intertwining the Hamiltonian quasi-Poisson $G$-structures. Furthermore it restricts to an open immersion 
  $\tMB(\bSig,\calC) \to \bbD(G)^{\circledast g} \circledast_G {}^1 \frkC \circledast_G \cdots \circledast_G {}^m \frkC$ 
  intertwining the quasi-Hamiltonian $G$-structures. 
\end{theorem}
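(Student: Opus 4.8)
The plan is to assemble $\Upsilon_{({}^it_j)}$ out of the local unfolding maps of \secref{sec:unfolding}, one for each marked point, and then to invoke the naturality of the fusion operation. First I would use \prpref{prp:triangular} to identify each $\calA({}^iQ)$ with its associated multi-fission space ${}^i\calA$ as a quasi-Hamiltonian $G \times Z_G({}^iQ)$-space, so that the results of \secref{sec:unfolding} apply verbatim. The centralizer hypotheses imposed on the ${}^it_j$ in the statement are exactly conditions (1)--(2) of \crlref{crl:unfolding} for the data $(\calA({}^iQ),\calC_i;{}^it_1,\dots,{}^it_{r_i})$; applying that corollary to each $i$ produces a $G$-equivariant morphism
\[
  \calA_{\ov\calC_i}({}^iQ) = \calA({}^iQ) \spqa{\ov{\calC_i}^{-1}} Z_G({}^iQ)
  \longrightarrow \ov{{}^i\frkC_0} \circledast_G {}^i\frkC_1 \circledast_G \cdots \circledast_G {}^i\frkC_{r_i} = \ov{{}^i\frkC}
\]
intertwining the Hamiltonian quasi-Poisson $G$-structures, which further restricts to an open immersion $\calA_{\calC_i}({}^iQ) \to {}^i\frkC$ of quasi-Hamiltonian $G$-spaces.

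Next I would define $\Upsilon_{({}^it_j)}$ as the fusion product over $G$ of these $m$ morphisms together with $g$ copies of the identity $\id \colon \bbD(G) \to \bbD(G)$. Since the underlying variety of a fusion product is just the Cartesian product, this is literally the product morphism, and it is manifestly $G$-equivariant. Its restriction to $\tMB(\bSig,\calC)$ is then a product of open immersions, hence an open immersion, landing in $\bbD(G)^{\circledast g} \circledast_G {}^1\frkC \circledast_G \cdots \circledast_G {}^m\frkC$, which is precisely the open subvariety of the target obtained by replacing each $\ov{{}^i\frkC_0}$ by the conjugacy class ${}^i\frkC_0$; the remaining factors ${}^i\frkC_j$ with $j \geq 1$ are classes of elements of $T$, hence semisimple and closed, so no closure is needed there.

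The heart of the argument, and the step I expect to require the most care, is the \emph{functoriality of fusion}: a product of $G$-equivariant morphisms each intertwining the given structures intertwines the fused structures as well. This is formal but must be verified. By definition, the fused bracket differs from the unfused product bracket only by the correction term $-\tfrac12(\delta^1(df),\delta^2(dg)) + \tfrac12(\delta^1(dg),\delta^2(df))$, built entirely from the infinitesimal actions of the two $G$-factors being fused, and the fused two-form differs only by $-\tfrac12(\mu_1^{-1}d\mu_1, d\mu_2\,\mu_2^{-1})$, built from the two $G$-moment maps. A $G$-equivariant morphism intertwines the infinitesimal $G$-actions, and a structure-preserving morphism intertwines the $G$-moment maps; pulling back these correction terms along the product morphism therefore reproduces the corresponding correction terms on the source. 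Hence the product morphism intertwines the fused brackets (resp.\ fused two-forms) as soon as the component maps intertwine the unfused ones, which they do by the first paragraph, the identity on $\bbD(G)$ trivially so. Iterating the two-factor case along the associativity of $\circledast_G$ then completes the proof, and the compatibility of moment maps encoded in \crlref{crl:unfolding} and \thmref{thm:unfolding}(3) propagates automatically through the fusions.
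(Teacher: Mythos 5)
Your proposal is correct and follows essentially the same route as the paper, which derives the theorem directly from \prpref{prp:enrichedconj} and \crlref{crl:unfolding} applied at each marked point, leaving the fusion step implicit. Your explicit verification that equivariant, structure-preserving maps are compatible with the fusion correction terms (both for the bracket and for the two-form) simply fills in the formal naturality argument the paper takes as standard.
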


Take $\sum_{i=1}^m (r_i+1)$ marked points 
$\bfb = \{\, {}^i b_j \mid i=1,2,\dots,m, j=0,1,\dots,r_i\,\}$ on $\Sigma$ 
and consider the irregular curve $\bSig' = (\Sigma,\bfb;\mathbf{0})$ 
with trivial irregular type $0$ at each marked point. Then 
\[
  \bbD(G)^{\circledast g} \circledast_G \ov{{}^1 \frkC} \circledast_G \cdots \circledast_G \ov{{}^m \frkC} 
  = \tMB(\bSig',\ov{\frkC}), \quad 
  \frkC = \prod_{i=1}^m \prod_{j=0}^{r_i} {}^i \frkC_j,
\]
and the above map $\Upsilon_{({}^i t_j)}$ induces a Poisson morphism 
\[
  \Upsilon_{({}^i t_j)} \colon \MB(\bSig,\ov{\calC}) \to \MB(\bSig',\ov{\frkC}).
\] 
It restricts to an open immersion 
\[
  \MB^\circ(\bSig,\calC) \to \MB^\st(\bSig',\frkC) 
\]
intertwining the symplectic structures, where 
\[
  \MB^\circ(\bSig,\calC) 
  = \{\, p \in \tMB(\bSig,\calC) \mid \text{$\Upsilon_{({}^i t_j)}(p)$ is a stable point in $\tMB(\bSig',\frkC)$} \,\} \spq G,
\]
which is an open subset of $\MB^\st(\bSig,\calC)$.
In particular, the following holds. 
\begin{corollary}\label{corl:unfolding-wcv}
  In the situation of \thmref{thm:unfolding-wcv}, 
  if $\MB^\circ(\bSig,\calC)$ is nonempty, 
  then $\MB(\bSig,\ov{\calC})$ and $\MB(\bSig',\ov{\frkC})$ 
  have Poisson birationally equivalent irreducible components.
  Namely, there exist nonempty open subsets of 
  $\MB(\bSig,\ov{\calC})$ and $\MB(\bSig',\ov{\frkC})$ 
  which are isomorphic as Poisson varieties.
\end{corollary}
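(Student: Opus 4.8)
The plan is to treat the corollary as a repackaging of the open immersion already produced in \thmref{thm:unfolding-wcv} and in the discussion immediately preceding the statement. Recall that $\Upsilon_{({}^i t_j)} \colon \MB(\bSig,\ov{\calC}) \to \MB(\bSig',\ov{\frkC})$ is a morphism of Poisson varieties, and that it restricts to an open immersion $\MB^\circ(\bSig,\calC) \to \MB^\st(\bSig',\frkC)$ intertwining the symplectic structures. First I would record the openness on both sides. Since $\MB^\circ(\bSig,\calC)$ is open in $\MB^\st(\bSig,\calC)$, which in turn is open in $\MB(\bSig,\ov{\calC})$, the locus $V \coloneqq \MB^\circ(\bSig,\calC)$ is a nonempty (by hypothesis) open subset of $\MB(\bSig,\ov{\calC})$. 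Likewise, its image $W \coloneqq \Upsilon_{({}^i t_j)}(V)$ is open in $\MB^\st(\bSig',\frkC)$, hence open in $\MB(\bSig',\ov{\frkC})$, and nonempty.

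Next I would upgrade the isomorphism $V \xrightarrow{\sim} W$ of varieties to a Poisson isomorphism. Here I would invoke that $\Upsilon_{({}^i t_j)}$ is globally a Poisson morphism: its restriction $V \to W$ is then a Poisson morphism which, being an open immersion onto $W$, is an isomorphism of varieties and therefore a Poisson isomorphism, since the Poisson structures on $V$ and $W$ are by definition the restrictions of those on $\MB(\bSig,\ov{\calC})$ and $\MB(\bSig',\ov{\frkC})$. Alternatively, and equivalently, I would use that on each stable locus the ambient Poisson structure is non-degenerate and coincides with the symplectic form coming from quasi-Hamiltonian reduction (\prpref{reduction2} with trivial residual group), so that a symplectomorphism between the open symplectic loci sitting inside the two Poisson varieties is automatically a Poisson map.

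Finally, the existence of the nonempty open subsets $V \subset \MB(\bSig,\ov{\calC})$ and $W \subset \MB(\bSig',\ov{\frkC})$ together with the Poisson isomorphism $V \cong W$ is exactly the asserted Poisson birational equivalence of irreducible components: $V$ (resp.\ $W$) is Zariski dense in the union of the irreducible components of $\MB(\bSig,\ov{\calC})$ (resp.\ $\MB(\bSig',\ov{\frkC})$) that it meets, and the isomorphism identifies these open subsets as Poisson varieties. The only point requiring care, and hence the main obstacle, is the compatibility of the two structures: one must be sure that the symplectic form carried by the stable locus is genuinely the restriction of the ambient Poisson bracket rather than an a priori unrelated form. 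This is precisely guaranteed by \prpref{reduction2}, which identifies the quasi-Hamiltonian reduction with the quasi-Poisson reduction of \prpref{prp:reduction1}; once this is in hand, no further computation is needed.
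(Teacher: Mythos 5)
Your proposal is correct and follows essentially the same route as the paper, which states the corollary as an immediate consequence (``In particular\dots'') of the discussion just before it: the global map $\Upsilon_{({}^i t_j)} \colon \MB(\bSig,\ov{\calC}) \to \MB(\bSig',\ov{\frkC})$ is Poisson and restricts to an open immersion $\MB^\circ(\bSig,\calC) \to \MB^\st(\bSig',\frkC)$, with both loci open in the ambient Poisson varieties. Your unpacking of this --- openness of $V$ and of its image $W$, the upgrade of the open immersion to a Poisson isomorphism via the global Poisson property, and the identification of the reduced symplectic structure with the restricted Poisson bracket through \prpref{reduction2} --- is precisely the intended argument, with no gap.
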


\thmref{thm:main} follows from this corollary; 
the assumption of \thmref{thm:main} implies that $\MB^\circ(\bSig,\calC)$ is nonempty. 

\appendix 

\section{The space of unfolding parameters}\label{app:parameter}

In this appendix, we go back to the situation of \secref{sec:unfolding} and 
show that for any $H_1$-conjugacy class $\calC$, 
there exist $t_1, t_2, \dots ,t_r \in T$ satisfying the conditions in \crlref{crl:unfolding}.

\begin{proposition}\label{prp:centralizer}
  Let $S$ be a torus in $G$. 
  Then there exist characters $\chi_1, \chi_2, \dots ,\chi_l \colon S \to \C^\times$ 
  such that $Z_G(g)=Z_G(S)$ for any $g \in S$ with $\chi_i(g) \neq \chi_j(g)$ ($i \neq j$).
\end{proposition}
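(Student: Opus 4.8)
The plan is to move the entire question into a single faithful representation, where both centralizers become eigenspace-stabilizers and the problem collapses to linear algebra. First I would fix a faithful finite-dimensional rational representation $\rho \colon G \hookrightarrow \GL(V)$, which exists since $G$ is a linear algebraic group, and decompose $V$ into weight spaces for the torus $S$:
\[
  V = \bigoplus_{i=1}^l V_{\chi_i}, \qquad \chi_1, \dots ,\chi_l \in X^*(S) \text{ distinct}.
\]
Because $S$ is a torus the weights $\chi_i$ are honest characters $S \to \C^\times$, and I would take precisely these to be the $\chi_i$ in the statement.

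The heart of the argument is a pair of translations. Since $\rho$ is faithful, for $x \in G$ one has $x \in Z_G(g)$ if and only if $\rho(x)$ commutes with $\rho(g)$, and $x \in Z_G(S)$ if and only if $\rho(x)$ preserves each $V_{\chi_i}$; the latter holds because the $V_{\chi_i}$ are exactly the distinct simultaneous eigenspaces of $\rho(S)$ (distinct characters are separated by some $s \in S$). Now $\rho(g)$ acts on $V_{\chi_i}$ by the scalar $\chi_i(g)$, so under the hypothesis $\chi_i(g) \neq \chi_j(g)$ for $i \neq j$ the eigenspaces of the single operator $\rho(g)$ are exactly the weight spaces $V_{\chi_i}$. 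Consequently, commuting with $\rho(g)$ is the same as preserving every eigenspace of $\rho(g)$, hence the same as preserving each $V_{\chi_i}$, hence the same as commuting with all of $\rho(S)$. Comparing the two translations yields $Z_G(g) = Z_G(S)$, as desired.

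The point to get right — and what I regard as the only genuine subtlety — is the choice of a \emph{faithful} representation rather than the adjoint one: this is what forces the argument to compute the full centralizer $Z_G(g)$, including its disconnected components and the center of $G$, and not merely its identity component. Beyond that there is no analytic obstacle, only the elementary identification of the eigenspaces of $\rho(g)$ with the $S$-weight spaces. I would finally dispose of the degenerate case $l=1$ separately: there the separation hypothesis is vacuous, but $\rho(S)$ then consists of scalars, forcing $S \subseteq Z(G)$, so that $Z_G(g) = Z_G(S) = G$ for every $g \in S$ and the conclusion holds trivially.
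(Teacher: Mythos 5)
Your proof is correct, and it is essentially the argument the paper invokes: the paper simply cites Springer's Lemma~6.4.3, whose proof embeds $G$ faithfully in some $\GL(V)$, decomposes $V$ into $S$-weight spaces, and takes the occurring weights as the $\chi_i$, exactly as you do. Your translation steps (commuting with $\rho(g)$ $\Leftrightarrow$ preserving its eigenspaces $\Leftrightarrow$ preserving the weight spaces $\Leftrightarrow$ centralizing $S$) are all sound, including the use of faithfulness to capture the full, possibly disconnected, centralizer.
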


\begin{proof}
  See \cite[Lemma~6.4.3]{Spr} and its proof.
\end{proof}

\begin{proposition}\label{prp:parameter}
  Let $H$ be a Levi subgroup of a parabolic subgroup of $G$ 
  containing $T$, and let $Z \subset T$ be a torus with $Z_G(Z)=H$ 
  $($for instance, take $Z$ to be the identity component of the center of $H)$.  
  Then for any $H$-conjugacy class $\calC \subset H$, 
  the set of $t \in Z$ satisfying the following conditions contains an open dense subset of $Z$: 
  \begin{enumerate}
    \item $Z_G(t)=H$;
    \item $Z_G(h t^{-1}) \subset H \ (h \in \calC)$.
  \end{enumerate}  
\end{proposition}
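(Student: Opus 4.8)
The plan is to verify conditions~(1) and~(2) on open dense subsets of $Z$ separately and then intersect; since $Z$ is irreducible it suffices to exhibit each good locus as a nonempty open (equivalently, the complement of a proper closed) subset. Condition~(1), namely $Z_G(t)=H$, holds on an open dense subset directly from \prpref{prp:centralizer} applied to $S=Z$: the characters $\chi_1,\dots,\chi_l$ produced there cut out the complementary bad locus $\bigcup_{i\neq j}\{\,t\in Z\mid \chi_i(t)=\chi_j(t)\,\}$, a finite union of proper closed cosets of subtori.

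For condition~(2) I would first reduce to a single representative. Because $Z_G(Z)=H$ forces $Z\subseteq Z(H)$, for $h=kh_0k^{-1}\in\calC$ with $k\in H$ we have $ht^{-1}=k(h_0t^{-1})k^{-1}$, so $Z_G(ht^{-1})=kZ_G(h_0t^{-1})k^{-1}$; as $k\in H$ this is contained in $H$ iff $Z_G(h_0t^{-1})\subseteq H$. Thus condition~(2) is equivalent to $Z_G(h_0t^{-1})\subseteq H$ for one fixed $h_0\in\calC$, which I choose with semisimple part $h_s$ lying in $T$ (possible since every semisimple element of $H$ is $H$-conjugate into the maximal torus $T$). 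As $t\in Z(H)$ commutes with $h_0$, the Jordan decomposition of $s:=h_0t^{-1}$ is $s=(h_st^{-1})\,h_{0,u}$ with semisimple part $s_s=h_st^{-1}\in T$; since $Z_G(s)\subseteq Z_G(s_s)$, it suffices to prove $Z_G(x)\subseteq H$ for generic $x$ in the coset $h_sZ\subseteq T$.

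Write $\Phi$ for the roots of $G$, $\Phi_H=\{\alpha\mid \alpha|_Z=1\}$ for those of $H=Z_G(Z)$, and $W_H\subseteq W$ for the Weyl group of $H$. Using the standard descriptions $Z_G(x)^0=\langle T,\,U_\alpha: \alpha(x)=1\rangle$ and the fact that $Z_G(x)$ is generated by $Z_G(x)^0$ together with $Z_G(x)\cap N_G(T)$, whose image in $W$ equals $W_x=\{w\mid w(x)=x\}$, the inclusion $Z_G(x)\subseteq H$ follows once: (a) $\alpha(x)\neq1$ for every $\alpha\in\Phi\setminus\Phi_H$; and (b) $W_x\subseteq W_H$. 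For~(a), the equation $\alpha(x)=1$ reads $\alpha(t)=\alpha(h_s)$, whose solution set is contained in a coset of $\ker(\alpha|_Z)$ and hence is proper closed, because $\alpha|_Z$ is nontrivial for $\alpha\notin\Phi_H$. For~(b), fixing $w\in W\setminus W_H$, the equation $w(x)=x$ rearranges to $t^{-1}w(t)=h_s^{-1}w(h_s)$, so its solution set in $Z$ is either empty or a coset of $Z^w=\{t\in Z\mid w(t)=t\}$.

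The crux is to guarantee this coset is proper, i.e.\ that $w$ moves $Z$ whenever $w\notin W_H$; equivalently, that the pointwise stabilizer $W_Z=\{w\in W\mid w|_Z=\id\}$ satisfies $W_Z\subseteq W_H$. I would prove this by picking (again via \prpref{prp:centralizer}) a point $z\in Z$ with $Z_G(z)=H$: since $H$ is connected, $W_z=W_H$, and any $w\in W_Z$ fixes $z$, whence $w\in W_z=W_H$. With $W_Z\subseteq W_H$ in hand, every bad locus in~(a) and~(b) is a proper closed subset of $h_sZ$, so their finite union is proper closed and its complement is the desired open dense set on which $Z_G(x)\subseteq H$. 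Intersecting this with the locus of condition~(1) yields an open dense subset of $Z$ on which both conditions hold. The main obstacle is exactly the Weyl-component analysis of step~(b) and the inclusion $W_Z\subseteq W_H$; the unipotent part causes no difficulty thanks to the reduction $Z_G(s)\subseteq Z_G(s_s)$, and step~(a), which controls $Z_G(x)^0$, is routine.
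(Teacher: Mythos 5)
Your proof is correct, but it takes a genuinely different route from the paper's. Both arguments share the same preliminary reductions (which the paper leaves largely implicit): replace condition (2) for all $h \in \calC$ by a single representative $h_0$ with semisimple part $h_s \in T$, using $Z \subseteq Z(H)$, and pass to the semisimple part via $Z_G(h_0t^{-1}) \subseteq Z_G(h_st^{-1})$, so that everything reduces to showing $Z_G(x) \subseteq H$ for generic $x$ in the coset $h_sZ \subseteq T$. From there the paths diverge. The paper applies \prpref{prp:centralizer} not to $Z$ but to the \emph{smallest torus $S$ containing the coset $h_sZ$}: since $Z \subseteq S$ gives $Z_G(S) \subseteq H$, it suffices that the separating locus $S'$ meets $h_sZ$, and this is exactly what minimality of $S$ guarantees (two separating characters agreeing on $h_sZ$ would force $h_sZ$ into a smaller subtorus) --- a few lines, with no root or Weyl-group analysis and no contact with the component structure of $Z_G(x)$. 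You instead compute $Z_G(x)$ directly from the root datum, via $Z_G(x) = \langle T, U_\alpha : \alpha(x)=1\rangle \cdot (Z_G(x)\cap N_G(T))$, splitting the genericity into the root conditions (a) and the Weyl-stabilizer condition (b). Your step (b) is the real content and is exactly where a naive roots-only argument would fail (it would only yield $Z_G(x)^0 \subseteq H$, which is not what \crlref{crl:unfolding} needs); your resolution --- proving the pointwise stabilizer inclusion $W_Z \subseteq W_H$ by choosing, again via \prpref{prp:centralizer}, a regular $z \in Z$ with $Z_G(z)=H$ and using that centralizers of tori are connected so $W_z = W_H$ --- is correct. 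What each approach buys: the paper's is shorter and sidesteps all component-group subtleties by funneling them through Springer's lemma on the auxiliary torus $S$; yours is more explicit and more informative, as it exhibits the bad locus concretely (a finite union of cosets of the subgroups $\ker(\alpha|_Z)$, $\alpha \in \Phi\setminus\Phi_H$, and $Z^w$, $w \in W \setminus W_H$), at the cost of invoking the structure theorem for centralizers of semisimple elements. One point worth flagging in both treatments: for the good locus of condition (1) (and for your choice of $z$) to be nonempty, the characters from \prpref{prp:centralizer} must be pairwise distinct on the torus in question; this is implicit in Springer's proof (the characters are the distinct weights of the torus on $\frkg$) but is used without comment by you and by the paper alike.
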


\begin{proof}
  Take $h \in \calC$ so that its semisimple part $h_s$ lies in $T$, 
  and let $S \subset G$ be the smallest torus containing $h_s Z$. 
  Then $h_s \in S$ and hence $Z = h_s^{-1} \cdot h_S Z \subset S$. 
  Thus $H=Z_G(Z) \supset Z_G(S)$. 
  Take characters $\chi_1, \dots ,\chi_l \colon S \to \C^\times$ as in \prpref{prp:centralizer} 
  and put 
  \[
    S' = \{\, t \in S \mid \chi_i(t) \neq \chi_j(t) \ (i \neq j) \,\}.
  \]
  If $S' \cap h_s Z = \emptyset$, then $\chi_i = \chi_j$ on $h_s Z$ for some $i \neq j$, 
  and hence $h_s Z$ is contained in the identity component of the kernel of $\chi_i \chi_j^{-1}$, 
  which contradicts the minimality of $S$. 
  Hence $S' \cap h_s Z \neq \emptyset$. 
  Since the set of $t \in Z$ such that $Z_G(t)=Z_G(Z)(=H)$ contains a nonempty open subset, 
  the result follows.
\end{proof}

Thus if we take a torus $Z_j \subset T$ so that $Z_G(Z_j)=H_j$ for each $j=1,2, \dots ,r$,
then generic elements $(t_1,t_2, \dots ,t_r)$ of $Z_1 \times Z_2 \times \cdots \times Z_r$
satisfy the conditions in \crlref{crl:unfolding}.

\bibliographystyle{amsplain}
\bibliography{refs-ymkw}

\end{document}